\date{}
\renewcommand{\uppercasenonmath}[1]{}
\numberwithin{equation}{section} \theoremstyle{plain}
\newtheorem{lem}{Lemma}[section]
\newtheorem{cor}[lem]{Corollary}
\newtheorem{prop}[lem]{Proposition}
\newtheorem{thm}[lem]{Theorem}
\newtheorem{definition}[lem]{Definition}
\newtheorem{Ex}[lem]{Example}
\newtheorem{Quest}[lem]{Question}
\newtheorem{Property}[lem]{Property}
\newtheorem{Properties}[lem]{Properties}
\newtheorem{Subprops}{}[lem]
\newtheorem{Para}[lem]{}
\newtheorem{fact}[lem]{Fact}
\newtheorem{remark}[lem]{Remark}
\newtheorem{rem}[lem]{Remark}
\newenvironment{ex}{\begin{Ex}\rm}{\end{Ex}}
\newtheorem*{ack*}{ACKNOWLEDGEMENTS}
\newcommand{\pf}{\noindent\begin {proof}}
\newcommand{\epf}{\end{proof}}
\newcommand{\mm}{\mathfrak{m}}
\newcommand{\s}{\stackrel}
\newcommand{\Hom}{{\rm Hom}}
\newcommand{\amp}{{\rm amp}}
\begin{document}
\begin{center}
{\Large  \bf G-dimensions for DG-modules over commutative DG-rings}

\vspace{0.5cm} Jiangsheng Hu, Xiaoyan Yang\footnote{Corresponding author}, Rongmin Zhu\\
\medskip
\end{center}
\bigskip
\centerline { \bf  Abstract}
\leftskip10truemm \rightskip10truemm \noindent
We define and study a notion of G-dimension
for DG-modules over a non-positively graded commutative noetherian DG-ring $A$. Some criteria for the finiteness of the G-dimension of a DG-module are given by applying a DG-version of projective resolution introduced by Minamoto [Israel J. Math. 245 (2021) 409-454]. Moreover, it is proved that the finiteness of G-dimension characterizes
the local Gorenstein property of $A$. Applications go in three directions. The first is to establish the connection between G-dimensions and the little finitistic dimensions of $A$. The second is to characterize Cohen-Macaulay and Gorenstein DG-rings by
 the relations between the class of maximal local-Cohen-Macaulay DG-modules and a special G-class of DG-modules. The third is to extend the classical Buchweitz-Happel Theorem and its inverse from commutative noetherian local rings to the setting of commutative noetherian local DG-rings. Our method is somewhat different from classical commutative ring.
\leftskip10truemm \rightskip10truemm \noindent
\\[2mm]
{\bf Keywords:} G-dimension; Gorenstein DG-algebra; maximal local-Cohen-Macaulay DG-module; little finitistic dimension; Buchwtweiz-Happel Theory.\\
{\bf 2020 Mathematics Subject Classification:} 13D05, 16E45, 13D09.

\leftskip0truemm \rightskip0truemm
{\footnotesize\tableofcontents\label{contents}}
\section{Introduction }\label{pre}

The Gorenstein dimension, or G-dimension, for finitely generated modules over a commutative noetherian ring was introduced by Auslander \cite{Auslander1} and was developed deeply by Auslander and Bridger \cite{AB1969}. The reason for the name is that G-dimension characterizes Gorenstein local rings exactly as projective dimension characterizes regular local rings. With that as a start, G-dimension has played an important role in singularity theory \cite{Buchweitz,Yoshino}, cohomology theory of commutative rings \cite{AM2002,chr} and representation theory of Artin algebras \cite{AR1991,RZ}. Over a general ring, Enochs and Jenda \cite{EJ1995} defined Gorenstein projective dimension for arbitrary modules.  For finitely generated modules over commutative noetherian rings it coincides with the Auslander and Bridger's G-dimension. For left modules over arbitrary associative rings, Holm \cite{Holm} proved that the new concept has the desired properties.

In a different direction, Yassemi \cite{Yassemi} studied G-dimension for complexes over a commutative noetherian local ring through a consistent use of the RHom-functor of complexes and the related category of reflexive complexes, while Christensen \cite{chr} went straight for the throat and gave the definition in terms of resolutions. Thus, the two definitions are equivalent, and they are both rooted in a result~---~due to Foxby --- saying that a finitely generated module has finite G-dimension if and only if it
is reflexive as a complex in the sense defined in \cite[Definition 2.4]{Yassemi} (see also \cite[Definition 2.1.6]{chr}).

Despite the great success of the G-dimension in commutative noetherian rings, until now it was
completely missing from higher algebra.
The aim of this paper is to introduce and study G-dimensions in
derived commutative algebra.  More specifically, we work with non-positively graded commutative DG-rings $A=\bigoplus_{i=-\infty}^{0}A^{i}$ with a differential of degree $+1$. These include the normalizations of simplicial commutative rings.

Given a commutative DG-ring $A$, the derived category of DG-modules over $A$ will be denoted by $\mathrm{D}(A)$. Its full subcategory consisting of DG-modules with bounded cohomology will be denoted by $\mathrm{D}^{\mathrm{b}}(A)$. We denote by $\mathrm{D}^{\mathrm{b}}_{\mathrm{f}}(A)$ the full triangulated subcategory of $\mathrm{D}^{\mathrm{b}}(A)$ consisting of DG-modules with finitely generated cohomology. For
a DG-module $M$, we set $\mathrm{inf}{M} = \inf\{n\hspace{0.03cm}|\hspace{0.03cm}\mathrm{H}^{n}(M) \neq 0\},\ \mathrm{sup}{M} = \sup\{n\hspace{0.03cm}|\hspace{0.03cm} \mathrm{H}^{n}(M) \neq 0\}$
and $\amp{M} = \mathrm{sup}{M} -\mathrm{inf}{M}$.

Motivated by the G-dimension for complexes over a commutative noetherian ring defined in \cite{chr,Yassemi}, we introduce the definition of G-dimension for DG-modules.

\begin{definition}\label{def:1.2} Let $A$ be a noetherian DG-ring with $\mathrm{amp}A<\infty$.
\begin{enumerate}
\item A DG-module $X\in \mathrm{D}^{\mathrm{b}}_{\mathrm{f}}(A)$ is said to be \emph{reflexive} if $\mathrm{RHom}_{A}(X,A)\in{\mathrm{D}^{\mathrm{b}}_{\mathrm{f}}(A)}$ and the morphism $X\rightarrow\mathrm{RHom}_{A}(\mathrm{RHom}_{A}(X,A),A)$ is {an isomorphism} in $\mathrm{D}^{\mathrm{b}}_{\mathrm{f}}(A)$.
\item For a reflexive DG-module $X$, we define the \emph{G-dimension} of $X$, denoted by $\mathrm{G}\textrm{-}\mathrm{dim}_AX$, by the formula
$$\mathrm{G}\textrm{-}\mathrm{dim}_AX=\mathrm{sup}\mathrm{RHom}_{A}(X,A).$$
If $X$ is not reflexive, we say that it has infinite G-dimension and write $\mathrm{G}\textrm{-}\mathrm{dim}_AX=\infty$.
\end{enumerate}
\end{definition}

 Let $A$ be an ordinary ring. If $X$ is an $A$-module, then one can show that this definition coincides with the usual definition of the G-dimension of $X$ defined by Auslander in \cite{Auslander1}; moreover, if $X$ is an $A$-complex, then the G-dimension of $X$ defined here is just the definition defined by Yassemi in \cite[Definition 2.8]{Yassemi} or by Christensen in \cite[Definition 2.3.3 and Theorem 2.3.7]{chr}.

 We denote by $\mathcal{R}(A)$ for the full subcategory of $\mathrm{D}(A)$ consisting of reflexive DG-modules. A DG-module $X\in\mathcal{R}(A)$ is said to be in the \emph{G-class} $\mathcal{G}$ if either $\mathrm{G}\textrm{-}\mathrm{dim}_AX=-\mathrm{sup}X$, or $X=0$, and denote
 by $\mathcal{G}_0$ the full subcategory of $\mathcal{G}$ consisting of objects $G$ such that either $\mathrm{amp}G\geq\mathrm{amp}A$ and $\mathrm{G}\textrm{-}\mathrm{dim}_AG=-\mathrm{sup}G=0$, or $G=0$ (see Definition \ref{def:1.2'}). If we write $\mathcal{P}\subseteq \mathrm{D}(A)$ for the full subcategory of direct summands of a finite direct sums of $A$, i.e. $\mathcal{P}=\mathrm{add}A$, then it is easy to check  $\mathcal{P}\subseteq\mathcal{G}_0$. If $A$ is an ordinary ring and $X$ an $A$-module, then $\mathcal{G}=\mathcal{G}_0$ is exactly the class of modules of G-dimension zero.

Following \cite{Mi18}, for any $0\not\simeq X\in \mathrm{D}^+(A)$,
a \emph{sppj morphism} $f:P\rightarrow X$ is a morphism in $\mathrm{D}(A)$ such that $P\in \mathrm{Add}A[-\mathrm{sup}X]$ and
the morphism $\mathrm{H}^{\mathrm{sup}X}(f)$ is surjective, where $\mathrm{Add}A$ is the full subcategory of direct summands of a direct sum of $A$.
 Moreover, a \emph{sppj resolution} $P_{\bullet}$ of $X$ is a sequence of exact triangles $X_{i+1}\stackrel{g_{i+1}}\rightarrow P_i\stackrel{f_i}\rightarrow X_i\rightsquigarrow$
such that $f_i$ is a sppj morphism for $i\geq 0$ with $X_0:=X$.
If $A$ is noetherian and $X\in\mathrm{D}^{\mathrm{b}}_{\mathrm{f}}(A)$ then we can choose $P_i$ in $\mathcal{P}$ by \cite[Proposition 2.26]{Mi18}.

We set $\mathrm{injdim}_{A}A:=\mathrm{inf}\{n\in\mathbb{Z}\mid\mathrm{Ext}^i_A(N,A)=0\ \textrm{for\ any}\ N\in\mathrm{D}^\mathrm{b}(A)\ \textrm{and}\ i>n-\mathrm{inf}N\}$. Recall from \cite{FIJ2003,FJ2003} that a commutative noetherian local DG-ring $(A,\bar{\mathfrak{m}},\bar{k})$ is called \emph{Gorenstein} if $\mathrm{amp}A<\infty$ and $\mathrm{injdim}_{A}A<\infty$.

Recall that for full subcategories $\mathcal{X},\mathcal{Y}\subseteq \mathrm{D}(A)$, we define $\mathcal{X}\ast\mathcal{Y}$
to be the full subcategory consisting $Z\in \mathrm{D}(A)$ which fits into an exact triangle $X\rightarrow Z\rightarrow Y\rightsquigarrow $ with $X\in \mathcal{X}$ and $Y\in \mathcal{Y}$.

Now, our main result can be stated as follows, which coveys that the standard techniques for classical commutative algebra can be generalized to commutative DG-algebras with appropriate modifications.

\begin{thm}\label{thm:main} Let $(A,\bar{\mathfrak{m}},\bar{k})$ be a commutative noetherian local DG-ring with $\mathrm{amp}A<\infty$.
\begin{enumerate}
\item[(1)] If $0\not\simeq X\in{\mathrm{D}^{\mathrm{b}}_{\mathrm{f}}(A)}$ with $\mathrm{amp}X\geq\mathrm{amp}A$ and $n$ is a natural number, then the following are equivalent:
\begin{enumerate}
\item[(i)] $\mathrm{G}\textrm{-}\mathrm{dim}_AX\leq n-\mathrm{sup}X$;

\item[(ii)] $X$ has a sppj resolution $P_\bullet$ such that $X_e\in\mathcal{G}_0[-\mathrm{sup}X_e]$;

\item[(iii)]  $X$ belongs to $\mathcal{P}[-\mathrm{sup}X]\ast\cdots\ast\mathcal{P}[-\mathrm{sup}X+n-1]\ast\mathcal{G}_0[-\mathrm{sup}X+n]$;

\item[(iv)]  $X$ belongs to $\mathcal{G}_0[-\mathrm{sup}X]\ast\mathcal{P}[-\mathrm{sup}X+1]\ast\cdots\ast\mathcal{P}[-\mathrm{sup}X+n]$.
\end{enumerate}
\item[(2)] If $0\not\simeq X\in{\mathrm{D}^{\mathrm{b}}_{\mathrm{f}}(A)}$ with $\mathrm{amp}X<\mathrm{amp}A$ and $n$ is a natural number, then the following are equivalent:
\begin{enumerate}
\item[(i)] $\mathrm{G}\textrm{-}\mathrm{dim}_AX\leq n+\mathrm{inf}A-\mathrm{inf}X$;

\item[(ii)] $X\oplus X[\mathrm{amp}A-\mathrm{amp}X]$ belongs to $\mathcal{P}[-\mathrm{sup}X]\ast\cdots\ast\mathcal{P}[-\mathrm{sup}X+n-1]\ast\mathcal{G}_0[-\mathrm{sup}X+n]$.
   \end{enumerate}
 \item[(3)]  The following conditions are equivalent:
   \begin{enumerate}
\item[(i)] $A$ is local Gorenstein;

\item[(ii)] {\rm G}-$\mathrm{dim}_A\bar{k}<\infty$;

\item[(iii)] {\rm G}-$\mathrm{dim}_A{X}<\infty$ for any $X\in{\mathrm{D}^{\mathrm{b}}_{\mathrm{f}}(A)}$.
   \end{enumerate}
\end{enumerate}
\end{thm}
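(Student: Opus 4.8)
The plan is to make part (1) the technical core, to get part (2) from it by a shift-and-direct-sum device, and to obtain part (3) by feeding part (1) into the theory of Gorenstein DG-rings of \cite{FIJ2003,FJ2003}. The two tools used throughout are Minamoto's sppj resolutions (recalled above; a resolution with $P_i\in\mathcal{P}$ exists for $X\in\mathrm{D}^{\mathrm{b}}_{\mathrm{f}}(A)$ by \cite{Mi18}, and since $P_i\in\mathcal{P}$ forces $\sup X_i=0$ one then has $\sup X_i=0$ whenever $X_i\neq0$) and the duality functor $(-)^{\dagger}:=\mathrm{RHom}_A(-,A)$ on $\mathrm{D}^{\mathrm{b}}_{\mathrm{f}}(A)$. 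I would first record the elementary facts used repeatedly: (a) $\mathcal{P}^{\dagger}=\mathcal{P}$, $\sup P=\sup P^{\dagger}=0$ for $0\neq P\in\mathcal{P}$, and $\mathcal{P}\subseteq\mathcal{G}_0$; (b) $(-)^{\dagger}$ carries $\mathcal{G}_0$ into $\mathcal{G}_0$ and is a contravariant self-equivalence there (immediate from biduality and $\mathrm{G}\textrm{-}\mathrm{dim}_AG^{\dagger}=\sup G^{\dagger\dagger}=\sup G$); (c) if two of the three terms of an exact triangle in $\mathrm{D}^{\mathrm{b}}_{\mathrm{f}}(A)$ are reflexive, so is the third, and reflexivity is preserved by shifts and finite direct sums; (d) $\mathrm{G}\textrm{-}\mathrm{dim}_AX[d]=\mathrm{G}\textrm{-}\mathrm{dim}_AX+d$, $\mathrm{G}\textrm{-}\mathrm{dim}_A(X\oplus Y)=\max\{\mathrm{G}\textrm{-}\mathrm{dim}_AX,\mathrm{G}\textrm{-}\mathrm{dim}_AY\}$, $\mathrm{G}\textrm{-}\mathrm{dim}_AX=\sup X^{\dagger}$, and — using the amplitude hypothesis $\amp X\geq\amp A$ of part (1), via the derived amplitude inequality for $\mathrm{RHom}_A(-,A)$ — for such a reflexive $X\neq0$ one has $\mathrm{G}\textrm{-}\mathrm{dim}_AX\geq-\sup X$.

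For part (1), normalize by $X\rightsquigarrow X[\sup X]$ (legitimate by (d) and preserving $\amp X\geq\amp A$), so one may assume $\sup X=0$; then (i) reads $\mathrm{G}\textrm{-}\mathrm{dim}_AX\leq n$, (ii) reads $X_n\in\mathcal{G}_0$, and the $\ast$-strings in (iii), (iv) read $\mathcal{P}\ast\mathcal{P}[1]\ast\cdots$. I would prove (i)$\Leftrightarrow$(ii) by induction on $n$: at $n=0$ both say $X\in\mathcal{G}_0$, using $\mathrm{G}\textrm{-}\mathrm{dim}_AX\geq-\sup X=0$ from (d); for the step, take a sppj triangle $X_1\to P_0\to X\rightsquigarrow$ with $P_0\in\mathcal{P}$ (so $\sup X_1=0$ or $X_1=0$), apply $(-)^{\dagger}$, and compare the long exact cohomology sequences of $X_1\to P_0\to X\rightsquigarrow$ and $X^{\dagger}\to P_0^{\dagger}\to X_1^{\dagger}\rightsquigarrow$: using (a), (d) one gets that $X$ is reflexive iff $X_1$ is, and, when both hold, $\sup X^{\dagger}\leq n\Leftrightarrow\sup X_1^{\dagger}\leq n-1$; since any sppj resolution of $X$ with $P_i\in\mathcal{P}$ truncates to one of $X_1$, the induction closes. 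Then (ii)$\Rightarrow$(iii) follows by rotating and splicing the first $n$ triangles of such a resolution — the shifts accumulated from the rotations produce exactly $\mathcal{P}\ast\mathcal{P}[1]\ast\cdots\ast\mathcal{P}[n-1]\ast\mathcal{G}_0[n]$ — and (iv)$\Rightarrow$(i) follows by dualizing the string in (iv) term by term, which bounds $\sup X^{\dagger}$ by $n$ via (a), (b) and gives reflexivity via (c). Finally (iii)$\Leftrightarrow$(iv) is a filtration rearrangement: dualizing a sppj morphism for $G^{\dagger}$ produces, for each $G\in\mathcal{G}_0$, a triangle $G\to Q\to G'\rightsquigarrow$ with $Q\in\mathcal{P}$, $G'\in\mathcal{G}_0$, i.e.\ $\mathcal{G}_0[k]\subseteq\mathcal{G}_0[k-1]\ast\mathcal{P}[k]$, while a short sup-estimate gives $\mathcal{P}[k]\ast\mathcal{G}_0[k]\subseteq\mathcal{G}_0[k]$; iterating these two inclusions slides a terminal $\mathcal{G}_0$-term of a string to the front and symmetrically back.

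Part (2) is then immediate: set $Y:=X\oplus X[\amp A-\amp X]$; since $\amp X<\amp A$ the shift is non-negative, so $\sup Y=\sup X$ and $\amp Y=\amp A$, and part (1) applies to $Y$. By (d), $\mathrm{G}\textrm{-}\mathrm{dim}_AY=\mathrm{G}\textrm{-}\mathrm{dim}_AX+(\amp A-\amp X)$, and substituting $\amp A=-\inf A$ turns condition (1)(i) for $Y$ into (2)(i) for $X$, while (1)(iii) for $Y$ is literally (2)(ii). For part (3): (iii)$\Rightarrow$(ii) is trivial. For (i)$\Rightarrow$(iii), if $A$ is local Gorenstein then $\mathrm{injdim}_AA<\infty$, so the very definition of $\mathrm{injdim}_AA$ gives $\sup X^{\dagger}\leq\mathrm{injdim}_AA-\inf X<\infty$ for every $X\in\mathrm{D}^{\mathrm{b}}_{\mathrm{f}}(A)$, hence $X^{\dagger}\in\mathrm{D}^{\mathrm{b}}_{\mathrm{f}}(A)$; biduality $X\simeq X^{\dagger\dagger}$ then holds because $A$ is a dualizing DG-module over itself in the local Gorenstein case \cite{FIJ2003,FJ2003}, so $X$ is reflexive with $\mathrm{G}\textrm{-}\mathrm{dim}_AX+\inf X\leq\mathrm{injdim}_AA$, giving $\mathrm{G}\textrm{-}{\rm gldim}A\leq\mathrm{injdim}_AA<\infty$.

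The implication (3)(ii)$\Rightarrow$(3)(i) is the step I expect to be the main obstacle. From $\mathrm{G}\textrm{-}\mathrm{dim}_A\bar{k}<\infty$ one obtains only $\mathrm{RHom}_A(\bar{k},A)\in\mathrm{D}^{\mathrm{b}}_{\mathrm{f}}(A)$, i.e.\ $\mathrm{H}^i(\mathrm{RHom}_A(\bar{k},A))=0$ for $i\gg0$; to deduce $\mathrm{injdim}_AA<\infty$ one needs a Bass-type formula in the DG setting, namely $\mathrm{injdim}_AA=\sup\mathrm{RHom}_A(\bar{k},A)$, the counterpart of $\mathrm{injdim}_RR=\sup\{i\mid\Ext^i_R(\bar{k},R)\neq0\}$, and then $A$ is local Gorenstein since $\amp A<\infty$ by hypothesis. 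I would establish this formula (or extract it from \cite{FIJ2003,FJ2003}) by a derived-Nakayama / local-cohomology argument. The delicate point, absent over ordinary rings, is that $\amp A>0$: one must check that the local-cohomology computation locating the top non-vanishing $\Ext^i_A(\bar{k},A)$ genuinely pins down $\mathrm{injdim}_AA$ and is not perturbed by the cohomological spread of $A$ itself — the same care underlies the amplitude inequality used in fact (d) and hence the base case and inductive comparison of part (1).
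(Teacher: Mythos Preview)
Your plan is essentially the paper's: sppj resolutions plus the duality $(-)^{\dagger}=\mathrm{RHom}_A(-,A)$ for part~(1), the direct-sum-with-shift device for part~(2), and the Bass-type criterion for part~(3). For (3)(ii)$\Rightarrow$(i) the paper invokes \cite[Proposition~4.2 and Theorem~4.7]{ya20} (which give exactly $\mathrm{injdim}_AA=\sup\mathrm{RHom}_A(\bar{k},A)$), not \cite{FIJ2003,FJ2003}, but your diagnosis of the step is correct.

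The one place where your argument diverges from the paper and acquires a gap is your ``filtration rearrangement'' for (iii)$\Leftrightarrow$(iv). The inclusion $\mathcal{P}[k]\ast\mathcal{G}_0[k]\subseteq\mathcal{G}_0[k]$ is fine, but the companion inclusion $\mathcal{G}_0[k]\subseteq\mathcal{G}_0[k-1]\ast\mathcal{P}[k]$ is exactly the content of the paper's Lemma~\ref{lem:6.00}, and that lemma carries the hypothesis $\amp G\geq\amp A$. For $G\in\mathcal{G}_0$ the amplitude inequality you use in~(d) only gives $\amp G\leq\amp A$; equality is not automatic. Concretely, your construction (sppj $P\to G^{\dagger}$, dualize) yields $G\to Q\to G'$ with $\sup G'\leq 0$ and $\mathrm{G}\text{-}\dim_A G'\leq 0$, but if $\mathrm{H}^0(P)\to\mathrm{H}^0(G^{\dagger})$ happens to be injective the cocone $Y$ has $\sup Y<0$, so $G'=Y^{\dagger}\notin\mathcal{G}_0$. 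The paper avoids this by \emph{not} passing through a general categorical inclusion: it proves (i)$\Rightarrow$(iv) directly by induction on $\mathrm{G}\text{-}\dim_AX+\sup X$, choosing at each step (via the ``add a redundant $P$'' trick of Lemma~\ref{lem:3.2'}) a sppj morphism $P\to X^{\dagger}$ whose cocone has amplitude $\geq\amp A$, so that the dualized cocone $Y$ again satisfies $\sup Y=\sup X$ and $\amp Y\geq\amp A$ and the induction continues. Your rearrangement idea can be salvaged the same way---thread the amplitude condition through by always enlarging $P$---but as written it does not close. The same amplitude bookkeeping is also what makes your inductive step in (i)$\Leftrightarrow$(ii) go through (you need $\amp X_1\geq\amp A$ to reapply~(d)); the paper's Lemmas~\ref{lem:3.01}--\ref{lem:3.2'} package precisely this.
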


A few comments on Theorem \ref{thm:main} are in order. First, Theorem \ref{thm:main} provides a criteria for a natural number to be an upper bound of the G-dimension of a DG-module over a commutative noetherian local DG-ring. One major difference from the case of rings is that $\mathrm{G}\textrm{-}\mathrm{dim}_AX\geq-\mathrm{sup}X$ need not hold in the DG-setting (see Example \ref{Ex1}), it leads to the proof of two key results, Lemmas \ref{lem:3.2'} and \ref{lem:6.00}, for obtaining such a characterization is rather different with the ordinary commutative rings.

Second, it should be pointed out that the inequality $\mathrm{amp}X<\mathrm{amp}A$ in Theorem \ref{thm:main}(2) is very often met. For instance, assume that $(A,\bar{\mathfrak{m}},\bar{k})$ is a commutative noetherian local DG-ring with $0<\mathrm{amp}A<\infty$. If we choose $X=\bar{k}$ then $\mathrm{amp}X=0<\mathrm{amp}A$, as desired.

Finally, if $(A,\bar{\mathfrak{m}},\bar{k})$ is a local Gorenstein DG-ring with $0<\mathrm{amp}A<\infty$, then $\bar{k}$ has finite G-dimension by Theorem \ref{thm:main}(3), {but by \cite[Theorem 0.2]{J}, $\bar{k}$ never has finite projective dimension introduced by Bird, Shaul, Sridhar and Williamson in \cite{BSSW}}. For more explanations, we refer to Example \ref{exm:3.1}. In combination with Proposition \ref{lem:3.2}, we conclude that G-dmension is a finer invariant than projective dimension for DG-modules. We refer to Corollaries \ref{corollary:1.1}, \ref{the3.4} and \ref{cor1.6} for more evidence about this.

Let $A$ be an Artin algebra.  The \emph{little finitistic dimension}
of $A$, ${\rm fpd}{A}$, is the supremum of projective dimensions of
finitely generated $A$-modules of finite projective dimension. It is
conjectured that ${\rm fpd}{A}<\infty$ holds for Artin algebras $A$; see
Bass \cite{Bass} and \cite[Conjectures]{ARS}. This is the Finitistic Dimension Conjecture.  Recently, this invariant has been generalized by Bird, Shaul, Sridhar and Williamson in \cite{BSSW} to the setting of non-positive commutative noetherian DG-rings with bounded cohomology. More precisely, they considered the number
$$\mathrm{fpd}A=\mathrm{sup}\{\mathrm{projdim}_AX+\inf{X}\mid
X\in\mathrm{D}^{\mathrm{b}}_{\mathrm{f}}(A)\ \textrm{with}\ \mathrm{projdim}_AX<\infty\}.$$

It should be noted that the little finitistic dimensions of an
algebra can alternatively be computed by G-dimension; see for instance \cite[Lemma 4.4]{Xi}. As the first application of Theorem \ref{thm:main}, we have the following corollary which shows that the little finitistic dimension $\mathrm{fpd}(A)$ over a DG-ring $A$ can be computed by G-dimension defined in Definition \ref{def:1.2} above.

\begin{cor}\label{corollary:1.1} If $A$ is a commutative noetherian local DG-ring with $\mathrm{amp}A<\infty$, then we have $$\mathrm{fpd}A=\mathrm{sup}\{\mathrm{G}\textrm{-}\mathrm{dim}_AX+\inf X\mid
X\in\mathrm{D}^{\mathrm{b}}_{\mathrm{f}}(A)\ {\rm with}\ \mathrm{G}\textrm{-}\mathrm{dim}_AX<\infty\}.$$
\end{cor}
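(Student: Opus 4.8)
Write $\rho$ for the supremum on the right-hand side. The only structural facts I will use are the shift and direct-sum formulas $\mathrm{projdim}_A(X[j])=\mathrm{projdim}_AX+j$, $\mathrm{G}\textrm{-}\mathrm{dim}_A(X[j])=\mathrm{G}\textrm{-}\mathrm{dim}_AX+j$, $\inf(X[j])=\inf X-j$ (so that $\mathrm{projdim}_AX+\inf X$ and $\mathrm{G}\textrm{-}\mathrm{dim}_AX+\inf X$ are shift invariant), the additivity of all three invariants under finite direct sums, and the comparison $\mathrm{G}\textrm{-}\mathrm{dim}_AX\le\mathrm{projdim}_AX$, with equality when $\mathrm{projdim}_AX<\infty$. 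The inequality is immediate from $\mathcal{P}\subseteq\mathcal{G}_0$ together with Theorem \ref{thm:main}(1); the equality I would deduce from Theorem \ref{thm:main}(1) by taking $n$ with $\mathrm{G}\textrm{-}\mathrm{dim}_AX=n-\mathrm{sup}X$, noting that in the resulting exact triangle the $\mathcal{G}_0[-\mathrm{sup}X+n]$-term is perfect whenever $X$ is, and then using that a perfect DG-module lying in $\mathcal{G}_0$ up to shift already lies in $\mathcal{P}$, so that the triangle witnesses $\mathrm{projdim}_AX\le n-\mathrm{sup}X$ (alternatively, cite Proposition \ref{lem:3.2}).

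I first prove $\mathrm{fpd}A\le\rho$. Let $0\not\simeq X\in\mathrm{D}^{\mathrm{b}}_{\mathrm{f}}(A)$ with $\mathrm{projdim}_AX<\infty$. If $\mathrm{amp}X\ge\mathrm{amp}A$, then $X$ itself is admissible for $\rho$ and, by the equality above, contributes $\mathrm{G}\textrm{-}\mathrm{dim}_AX+\inf X=\mathrm{projdim}_AX+\inf X$. If $\mathrm{amp}X<\mathrm{amp}A$, put $k:=\mathrm{amp}A-\mathrm{amp}X>0$ and $Y:=X\oplus X[k]$; then $\mathrm{amp}Y=\mathrm{amp}A$, so $Y$ is admissible, $Y$ is again perfect with $\mathrm{projdim}_AY=\mathrm{projdim}_AX+k$ and $\inf Y=\inf X-k$, whence $\mathrm{G}\textrm{-}\mathrm{dim}_AY+\inf Y=\mathrm{projdim}_AY+\inf Y=\mathrm{projdim}_AX+\inf X$. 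In either case $\rho\ge\mathrm{projdim}_AX+\inf X$, and taking the supremum over all such $X$ gives $\rho\ge\mathrm{fpd}A$.

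For the reverse inequality I must show $g+\inf Y\le\mathrm{fpd}A$ for every admissible $Y$ with $g:=\mathrm{G}\textrm{-}\mathrm{dim}_AY<\infty$. The plan is to reduce to a Koszul witness via the Auslander--Bridger-type depth equality for DG-modules of finite G-dimension, $\mathrm{G}\textrm{-}\mathrm{dim}_AY=\mathrm{depth}A-\mathrm{depth}_AY$, which together with the elementary inequality $\mathrm{depth}_AY\ge\inf Y$ gives
\[
g+\inf Y=\mathrm{depth}A-(\mathrm{depth}_AY-\inf Y)\le\mathrm{depth}A .
\]
It then suffices to produce a perfect DG-module realizing $\mathrm{depth}A$: take a maximal $A$-regular sequence $\underline{x}$ (of length $\mathrm{depth}A+\mathrm{amp}A$) and let $K$ be the associated Koszul complex. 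Then $K$ is perfect with $\mathrm{projdim}_AK=\mathrm{length}(\underline{x})=\mathrm{depth}A+\mathrm{amp}A$ and $\inf K=\inf A=-\mathrm{amp}A$, so $\mathrm{projdim}_AK+\inf K=\mathrm{depth}A$; since also $\mathrm{amp}K=\mathrm{amp}A$, this is consistent with the previous paragraph and in fact forces $\mathrm{fpd}A=\mathrm{depth}A$. Chaining the two observations gives $g+\inf Y\le\mathrm{depth}A\le\mathrm{fpd}A$, as required.

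The step I expect to be the genuine obstacle is exactly the input invoked in the last paragraph: the DG Auslander--Bridger depth formula for DG-modules of finite G-dimension, and the precise behaviour of the Koszul witness, particularly $\inf K=\inf A$ (which relies on $\underline{x}$ being a bona fide $A$-regular sequence, so that forming $K$ and applying Nakayama does not raise the bottom of the cohomology). Should a serviceable depth formula not be at hand here, I would instead argue directly from Theorem \ref{thm:main}(1): with $n=g+\mathrm{sup}Y$, condition (iv) yields an exact triangle $G\to Y\to P\rightsquigarrow$ with $G\in\mathcal{G}_0[-\mathrm{sup}Y]$ and $P$ perfect; minimality of the sppj resolution forces $\mathrm{projdim}_AP=g$, and comparing cohomological infima in the long exact sequence---using $\mathrm{amp}Y\ge\mathrm{amp}A\ge\mathrm{amp}G$ to bound $\inf G$ below by $\inf Y$---gives $\inf P\ge\inf Y$, so $\mathrm{projdim}_AP+\inf P\ge g+\inf Y$ and hence $\mathrm{fpd}A\ge g+\inf Y$; the borderline case $\mathrm{amp}Y=\mathrm{amp}A$ is dispatched by routing through condition (iii) and dualizing a sppj morphism of the totally reflexive tail. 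In every approach the delicate part is entirely the bookkeeping with the $+\inf$ normalization.
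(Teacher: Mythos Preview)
The direction $\mathrm{fpd}A\le\rho$ matches the paper's reasoning (Proposition~\ref{lem:3.2}), though your case split is superfluous: Lemma~\ref{lem0.2} shows every nonzero $X$ with $\mathrm{projdim}_AX<\infty$ already has $\mathrm{amp}X\ge\mathrm{amp}A$, so your Case~2 never occurs.

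For the hard direction $\rho\le\mathrm{fpd}A$, your primary approach via the AB formula is a genuinely different route from the paper's. The paper argues by direct construction: given admissible $X$ with $\mathrm{G}\text{-}\mathrm{dim}_AX=n$, it first extracts (from Theorem~\ref{thm:main}(1)(iv), packaged as Lemma~\ref{lem6.1}) a triangle $G\to X\to K\rightsquigarrow$ with $G\in\mathcal G_0[-\sup X]$ and $\mathrm{projdim}_AK=n-1$, then takes a map $G\to P$ with $P\in\mathcal P[-\sup X]$ supplied by Lemma~\ref{lem:6.00} and forms the pushout $Y$. A short computation gives $\mathrm{projdim}_AY=n$ and $\sup Y=\sup X$, and the triangle $X\to Y\to G'\rightsquigarrow$ together with $\inf G'\ge\inf A+\sup X\ge\inf X$ (valid because $\mathrm{amp}X\ge\mathrm{amp}A$) yields $\inf Y\ge\inf X$, whence $\mathrm{projdim}_AY+\inf Y\ge n+\inf X$. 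Your depth route instead bounds $\rho\le\mathrm{depth}A$ via Theorem~\ref{lem:3.3} and then exhibits $\mathrm{depth}A\le\mathrm{fpd}A$ by a Koszul witness. This is valid in principle, but the Koszul step---existence of an $A$-regular sequence of length $\mathrm{seq.depth}A$ and preservation of $\mathrm{amp}$ under the derived quotient---is not proved in this paper and must be imported from Shaul's work. The paper's route is self-contained; yours has the pleasant byproduct of identifying both sides with $\mathrm{depth}A$, at the cost of external input.

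Your fallback via condition~(iv) has a genuine gap exactly where you flag it. From $G\to Y\to P\rightsquigarrow$ with $G\in\mathcal G_0[-\sup Y]$ one indeed gets $\inf G\ge\inf A+\sup Y\ge\inf Y$, but the long exact sequence gives $\inf P\ge\inf Y$ only when $\inf G>\inf Y$; in the borderline case $\inf G=\inf Y$ (equivalently $\mathrm{amp}Y=\mathrm{amp}A$) one has $H^{\inf Y-1}(P)\cong\ker\bigl(H^{\inf Y}(G)\to H^{\inf Y}(Y)\bigr)$, which need not vanish. Your patch (``routing through condition~(iii) and dualizing'') is too vague to evaluate. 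Note that the paper's pushout construction sidesteps this asymmetry precisely by arranging the triangle so that $X$ maps \emph{into} the finite-projective-dimension object $Y$, rather than having $Y$ map \emph{onto} the perfect piece; this reverses which term picks up the $+1$ shift in the connecting map.
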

As a direct consequence of Theorem \ref{thm:main}(3) and Corollary \ref{corollary:1.1}, it follows that a local Gorenstein DG-ring $A$ has finite little finitistic dimension, {which gives a new proof of a very particular case of \cite[Theorem A]{BSSW}}.

Let $A$ be a commutative noetherian local ring (not a DG-ring). A close relation between the class of maximal Cohen-Macaulay modules and the class of modules of G-dimension zero over $A$ can be shown in the following fact; see for instance \cite[3.3]{CPST}.

\begin{fact}\label{fact:1.1} Let $A$ be a commutative noetherian local ring. Denote by $\mathcal{M}$ the class of maximal Cohen-Macaulay modules over $A$. Then the following hold:
\begin{enumerate}
\item $A$ is Cohen-Macaulay if and only if $\mathcal{G}\subseteq \mathcal{M}$.

\item $A$ is Gorenstein if and only if $\mathcal{G}=\mathcal{M}$.
\end{enumerate}
\end{fact}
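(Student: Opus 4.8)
The plan is to deduce Fact \ref{fact:1.1} from three classical inputs on G-dimension over a commutative noetherian local ring $A$: (a) the free module $A$ has G-dimension zero; (b) the Auslander--Bridger formula, which says that $\mathrm{G}\textrm{-}\mathrm{dim}_A M+\mathrm{depth}_A M=\mathrm{depth}\,A$ for every nonzero finitely generated $M$ with $\mathrm{G}\textrm{-}\mathrm{dim}_A M<\infty$; and (c) the theorem that $A$ is Gorenstein if and only if every finitely generated $A$-module has finite G-dimension, equivalently $\mathrm{G}\textrm{-}\mathrm{dim}_A k<\infty$. Throughout, the zero module is vacuously both in $\mathcal{G}$ and in $\mathcal{M}$, and I write $d:=\dim A$; recall also that in the ordinary-ring situation of Fact \ref{fact:1.1} the class $\mathcal{G}$ is exactly the class of modules of G-dimension zero.

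For part (1), the implication $\mathcal{G}\subseteq\mathcal{M}\Rightarrow A$ Cohen--Macaulay is immediate from (a): since $A\in\mathcal{G}\subseteq\mathcal{M}$, the ring $A$ is maximal Cohen--Macaulay over itself, i.e.\ $\mathrm{depth}\,A=\dim A$. Conversely, if $A$ is Cohen--Macaulay and $0\neq M\in\mathcal{G}$, then $\mathrm{G}\textrm{-}\mathrm{dim}_A M=0$, so by (b) we get $\mathrm{depth}_A M=\mathrm{depth}\,A=d$, whence $M\in\mathcal{M}$.

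For part (2), first assume $A$ is Gorenstein. Then $A$ is Cohen--Macaulay, so $\mathcal{G}\subseteq\mathcal{M}$ by part (1); and by (c) every $M\in\mathcal{M}$ has finite G-dimension, so (b) forces $\mathrm{G}\textrm{-}\mathrm{dim}_A M=\mathrm{depth}\,A-\mathrm{depth}_A M=d-d=0$, giving $\mathcal{M}\subseteq\mathcal{G}$; hence $\mathcal{G}=\mathcal{M}$. Conversely, suppose $\mathcal{G}=\mathcal{M}$. From $\mathcal{G}\subseteq\mathcal{M}$ and part (1), $A$ is Cohen--Macaulay of dimension $d$. Now pass to the $d$-th syzygy $G:=\Omega^d k$ in a minimal free resolution of the residue field. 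If $G=0$ then $\mathrm{projdim}_A k<\infty$, so $A$ is regular, hence Gorenstein. Otherwise, the iterated depth lemma applied along the resolution, together with $A$ being Cohen--Macaulay, gives $\mathrm{depth}_A G\geq\min\{d,\,d+\mathrm{depth}_A k\}=d$, so $G$ is maximal Cohen--Macaulay; thus $G\in\mathcal{M}\subseteq\mathcal{G}$, i.e.\ $\mathrm{G}\textrm{-}\mathrm{dim}_A G=0$. Splicing this with the first $d$ (free) steps of the resolution exhibits a length-$d$ resolution of $k$ by modules of G-dimension zero, so $\mathrm{G}\textrm{-}\mathrm{dim}_A k\leq d<\infty$, and (c) yields that $A$ is Gorenstein.

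I expect the substance to be concentrated in the converse of (2); the two non-formal ingredients there are the depth estimate for high syzygies over a Cohen--Macaulay ring and the hard direction of (c) (that finiteness of $\mathrm{G}\textrm{-}\mathrm{dim}_A k$ forces $A$ Gorenstein), both classical. The remaining care is bookkeeping: fixing the convention under which free modules over a Cohen--Macaulay ring count as maximal Cohen--Macaulay, treating the zero module uniformly, and dispatching the degenerate cases $d=0$ and $G=0$ separately as above.
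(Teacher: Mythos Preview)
Your proof is correct. Note, however, that the paper does not actually prove Fact~\ref{fact:1.1}: it is stated as a known result with a citation to \cite[3.3]{CPST}, and then generalized to the DG setting in Corollary~\ref{the3.4}. That said, your argument is essentially the classical prototype of the paper's proof of that generalization: both deduce Cohen--Macaulayness from the inclusion by observing that $A$ itself lies in the G-class; both use the Auslander--Bridger formula (the paper's Theorem~\ref{lem:3.3} and Corollary~\ref{lem:3.3'}) to push G-class objects into $\mathcal{M}$; and for the converse of (2) both take a high syzygy of the residue field, show it is maximal Cohen--Macaulay via a depth estimate, and conclude finite G-dimension of $k$ (the paper replaces your minimal free resolution of $k$ by an sppj resolution of $\bar{k}\oplus\bar{k}[-\inf A]$, but the idea is the same). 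So there is no genuine divergence to report.
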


Recently, the theory of Cohen-Macaulay rings and Cohen-Macaulay modules has been extended by Shaul \cite{Shaul} to the setting of commutative noetherian DG-rings. There are many examples of local Cohen-Macaulay DG-rings, in particular local Gorenstein
DG-rings are Cohen-Macaulay. As the second application of Theorem \ref{thm:main}, we can generalize the above fact to the setting of commutative noetherian local DG-rings.

\begin{cor}\label{the3.4} Let $(A,\bar{\mathfrak{m}})$ be a commutative noetherian local DG-ring with $\mathrm{amp}A<\infty$. Denote by $\mathcal{M}$ the class of maximal local-Cohen-Macaulay DG-modules over $A$. Set $$\mathcal{H}=\{X\in\mathcal{G}\mid
\mathrm{ampR}\Gamma_{\bar{\mathfrak{m}}}X\geq\mathrm{amp}X=\mathrm{amp}A\}.$$  Then the following hold:
\begin{enumerate}
\item $A$ is local-Cohen-Macaulay if and only if $\mathcal{H}\subseteq\mathcal{M}$.

\item $A$ is Gorentein if and only $\mathcal{H}=\mathcal{M}$.
\end{enumerate}
\end{cor}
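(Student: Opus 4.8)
The plan is to transplant the classical proof of Fact~\ref{fact:1.1} to the derived setting, handling the Gorenstein equivalences through Theorem~\ref{thm:main}(3) and the Cohen-Macaulay ones through Shaul's amplitude calculus for local cohomology. One first records a harmless reduction: since $\mathrm{amp}\,\mathrm{R}\Gamma_{\bar{\mathfrak{m}}}X\geq\mathrm{amp}X$ for every nonzero $X\in\mathrm{D}^{\mathrm{b}}_{\mathrm{f}}(A)$, the inequality built into the definition of $\mathcal{H}$ is automatic once $\mathrm{amp}X=\mathrm{amp}A$, so that $\mathcal{H}=\{\,X\in\mathcal{G}\mid\mathrm{amp}X=\mathrm{amp}A\,\}$, while a DG-module lies in $\mathcal{M}$ precisely when its Cohen-Macaulay defect vanishes and it carries the same normalization $\mathrm{amp}X=\mathrm{amp}A$. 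In particular $A$ itself always belongs to $\mathcal{H}$, because $A\in\mathcal{P}\subseteq\mathcal{G}_{0}\subseteq\mathcal{G}$ and $\mathrm{amp}A=\mathrm{amp}A$; and $A\in\mathcal{M}$ is nothing but the statement that the DG-ring $A$ is local-Cohen-Macaulay.

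With this in hand, part~(1) is nearly formal. If $\mathcal{H}\subseteq\mathcal{M}$ then $A\in\mathcal{H}\subseteq\mathcal{M}$, so $A$ is local-Cohen-Macaulay. Conversely, assume $A$ is local-Cohen-Macaulay and take $X\in\mathcal{H}$; since $\mathrm{amp}\,\mathrm{R}\Gamma_{\bar{\mathfrak{m}}}X\geq\mathrm{amp}A$ holds for free, it suffices to prove $\mathrm{amp}\,\mathrm{R}\Gamma_{\bar{\mathfrak{m}}}X\leq\mathrm{amp}A=\mathrm{amp}\,\mathrm{R}\Gamma_{\bar{\mathfrak{m}}}A$. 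Here the key tool enters: a derived Auslander-Bridger inequality for DG-modules of finite G-dimension, of the shape
\[
\mathrm{amp}\,\mathrm{R}\Gamma_{\bar{\mathfrak{m}}}X\ \leq\ \mathrm{amp}\,\mathrm{R}\Gamma_{\bar{\mathfrak{m}}}A+\big(\mathrm{G}\textrm{-}\mathrm{dim}_{A}X+\mathrm{sup}X\big),
\]
in which the bracketed quantity is always $\geq0$ and equals $0$ exactly when $X\in\mathcal{G}$. For $X\in\mathcal{H}\subseteq\mathcal{G}$ the right-hand side collapses to $\mathrm{amp}\,\mathrm{R}\Gamma_{\bar{\mathfrak{m}}}A=\mathrm{amp}A$, hence $X\in\mathcal{M}$.

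For part~(2): if $A$ is Gorenstein then it is local-Cohen-Macaulay, so $\mathcal{H}\subseteq\mathcal{M}$ by~(1); for the reverse inclusion take $X\in\mathcal{M}$. Theorem~\ref{thm:main}(3) gives $\mathrm{G}\textrm{-}\mathrm{gldim}A<\infty$, so $X$ is reflexive with $\mathrm{G}\textrm{-}\mathrm{dim}_{A}X<\infty$, and the only remaining point is $X\in\mathcal{G}$, i.e. $\mathrm{G}\textrm{-}\mathrm{dim}_{A}X=-\mathrm{sup}X$; this follows from the \emph{equality} version of the Auslander-Bridger formula (valid under the maximality normalization), which forces the bracketed term above to vanish once $\mathrm{amp}\,\mathrm{R}\Gamma_{\bar{\mathfrak{m}}}X=\mathrm{amp}A=\mathrm{amp}\,\mathrm{R}\Gamma_{\bar{\mathfrak{m}}}A$. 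Conversely, assume $\mathcal{H}=\mathcal{M}$. Then $\mathcal{H}\subseteq\mathcal{M}$ and part~(1) make $A$ local-Cohen-Macaulay, while $\mathcal{M}\subseteq\mathcal{H}\subseteq\mathcal{G}$ says that every maximal local-Cohen-Macaulay DG-module has finite G-dimension. To conclude that $A$ is Gorenstein, I would reduce by completion to the case where $A$ admits a canonical DG-module $\omega$ (a normalized dualizing DG-module, which is maximal local-Cohen-Macaulay): then $\omega\in\mathcal{M}\subseteq\mathcal{G}$, so $\mathrm{G}\textrm{-}\mathrm{dim}_{A}\omega<\infty$, and the derived analogue of the classical fact that a Cohen-Macaulay local ring whose canonical module has finite G-dimension is Gorenstein completes the argument (one could alternatively feed a suitable G-dimension-finite approximation of $\bar{k}$ into Theorem~\ref{thm:main}(3)).

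The main obstacle is the derived Auslander-Bridger (in)equality, together with the structural input about canonical DG-modules used in the last step. In derived commutative algebra $\mathrm{R}\Gamma_{\bar{\mathfrak{m}}}$ no longer concentrates in a single cohomological degree, not even for $A$, so the classical identity ``$\mathrm{G}\textrm{-}\mathrm{dim}+\mathrm{depth}=\mathrm{depth}$'' must be recast as amplitude (in)equalities, and one has to carry the recurring ``$+\mathrm{sup}X$'' shift corrections and the distinction between $\mathrm{amp}X$ and $\mathrm{amp}A$ all the way through. I expect that inequality to be proved by induction on $\mathrm{amp}A$, resolving $X$ by a sppj-resolution (so that $\mathcal{P}$ supplies the base case) and exploiting the triangulated behaviour of $\mathrm{R}\Gamma_{\bar{\mathfrak{m}}}$ and $\mathrm{RHom}_{A}(-,A)$; granting that, the remainder is the formal interplay above between $\mathcal{G}$, $\mathcal{M}$, and Theorem~\ref{thm:main}(3).
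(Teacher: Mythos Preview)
Your treatment of part~(1) and of the ``only if'' half of part~(2) is essentially the paper's own argument: the paper also anchors everything in an Auslander--Bridger formula (Theorem~\ref{lem:3.3} and Corollary~\ref{lem:3.3'}), which rewrites your amplitude inequality as $\mathrm{G}\textrm{-}\mathrm{dim}_AX+\sup X=\mathrm{seq.depth}\,A-\mathrm{seq.depth}_AX$ for $X$ with $\mathrm{amp}X=\mathrm{amp}A$. One correction: this formula is proved in three lines by a direct depth computation (apply $\mathrm{depth}_A$ to $X\simeq\mathrm{RHom}_A(\mathrm{RHom}_A(X,A),A)$ and use that $\mathrm{depth}_A\mathrm{RHom}_A(Y,A)=-\sup Y+\mathrm{depth}\,A$), not by induction on $\mathrm{amp}A$ via sppj resolutions; you are making the ``main obstacle'' harder than it is.

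The real divergence is in the ``if'' direction of~(2). Your primary route---pass to the completion, observe that the normalized dualizing DG-module $R$ is maximal local-Cohen--Macaulay, deduce $R\in\mathcal{H}$ so that $\mathrm{G}\textrm{-}\mathrm{dim}_AR<\infty$, and then invoke a ``derived analogue'' of the classical fact that finite G-dimension of the canonical module forces Gorensteinness---leaves a genuine gap: that last implication is not available from anything proved in the paper, and in the DG setting it is not a one-liner. By contrast, the paper carries out exactly the alternative you mention only parenthetically. It takes a sppj resolution of $k:=\bar{k}\oplus\bar{k}[-\inf A]$, and for $d=\dim\mathrm{H}^0(A)$ shows by explicit depth and lc.dim estimates that $G:=A[-\sup k_d]\oplus k_d$ is maximal local-Cohen--Macaulay; then $\mathcal{M}=\mathcal{H}$ gives $G\in\mathcal{G}$, so $k$ has a sppj resolution with a $\mathcal{G}_0$-term, hence $\mathrm{G}\textrm{-}\mathrm{dim}_A\bar{k}<\infty$ by Theorem~\ref{thm:main}(1), and Theorem~\ref{thm:main}(3) finishes. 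This is self-contained and avoids importing any unproved structural fact about dualizing DG-modules; if you want your argument to close, you should either supply a proof that $\mathrm{G}\textrm{-}\mathrm{dim}_AR<\infty\Rightarrow A$ Gorenstein in the DG setting, or switch to the syzygy construction.
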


In the study of stable homological algebra and Tate cohomology for two sided noetherian rings $A$, Buchweitz \cite{Buchweitz} introduced the Verdier quotient $\textrm{D}_{\mathrm{sg}}(A):= \textrm{D}^{\mathrm{b}}(A)/\textrm{K}^{\mathrm{b}}(A)$, where $\textrm{K}^{\mathrm{b}}(A)$ is the bounded homotopy category of finitely generated projective left $A$-modules. Later on, this category was reconsidered by Orlov \cite{Orlov}. Since it measures the homological singularity of the category of finitely generated left $A$-modules in sense that $\textrm{D}_{\mathrm{sg}}(A)=0$ if and only if the global dimension of $A$ is finite, we call it the {\it singularity category}. {Buchweitz-Happel Theorem} (\cite[Theorem 4.4.1]{Buchweitz}, see also \cite[Theorem 4.6]{H2}) says that there is a fully faithful triangle functor $F:\underline{\mathcal{G}}\to\textrm{D}_{sg}(A)$ provided that $A$ is a commutative local Gorenstein ring. Also, it was shown in \cite{BJO} that the inverse of this theorem holds true (see also \cite{Be1,KZ}). Recently, Buchweitz-Happel Theorem has been generalized by Jin to a proper noncommutative Gorenstein DG-algebra over a field $k$ (see \cite[Theorem 0.3 and Assumption 0.1]{jin20}).

As the third application of Theorem \ref{thm:main}, we extend the classical Buchweitz-Happel Theorem and its inverse from commutative noetherian local rings to the setting of commutative noetherian local DG-rings. For notation and notions, we refer the reader to Section \ref{subsection:4.3}.

\begin{cor}\label{cor1.6} Assume that $A$ is a commutative noetherian local DG-ring with $\mathrm{amp}A<\infty$. Let $\mathcal{A} = \{X \in \mathcal{R}(A)\hspace{0.03cm}~|\hspace{0.03cm}\sup{X}\leq0\text{~~ and~~}{\rm G}\text{-}\mathrm{dim}_AX\leq0\}$, and let ${\rm D}_{sg}(A):=\mathrm{D}_{\mathrm{f}}^{\mathrm{b}}(A)/ \langle\mathcal{P}\rangle$ be the singularity category over $A$. Then
 the functor $$F:\underline{\mathcal{A}}\rightarrow {\rm D}_{sg}(A)$$ is a triangle equivalence if and only if $A$ is a local Gorenstein DG-ring.
\end{cor}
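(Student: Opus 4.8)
The plan is to run the derived analogue of the Buchweitz--Happel argument, feeding in Theorem~\ref{thm:main} at the two places where genuinely new input is needed. The formal framework is supplied by Section~\ref{subsection:4.3}: $\underline{\mathcal{A}}$ is triangulated, ${\rm D}_{sg}(A)=\mathrm{D}^{\mathrm{b}}_{\mathrm{f}}(A)/\langle\mathcal{P}\rangle$ is the Verdier quotient by the thick subcategory generated by $\mathcal{P}=\mathrm{add}A$, and $F$ is a triangle functor. The first step I would take is to record that $F$ is fully faithful \emph{with no hypothesis on $A$} --- the formal half of the theorem, argued the usual way: for $X,Y\in\mathcal{A}$ a morphism $[X]\to[Y]$ in ${\rm D}_{sg}(A)$ is a roof $X\xleftarrow{s}W\xrightarrow{t}Y$ with $\mathrm{cone}(s)\in\langle\mathcal{P}\rangle$, and one reduces it to an honest morphism $X\to Y$ using $\mathrm{sup}X\leq0$ together with the fact that a DG-module of G-dimension $\leq0$ admits a complete resolution by finitely generated free DG-modules (so that maps to and from objects of $\langle\mathcal{P}\rangle$ are controlled); dividing out the morphisms that factor through $\mathrm{add}A$ matches $\underline{\mathrm{Hom}}_{\mathcal{A}}$ with $\mathrm{Hom}_{{\rm D}_{sg}(A)}$. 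Since a fully faithful triangle functor is a triangle equivalence exactly when it is dense (essentially surjective), the corollary reduces to: $F$ is dense if and only if $A$ is local Gorenstein.

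For the forward implication ($F$ dense $\Rightarrow$ $A$ local Gorenstein), I would argue as follows. Since $\mathcal{P}\subseteq\mathcal{G}_{0}$ and the DG-modules of finite G-dimension --- equivalently, the reflexive ones --- form a thick subcategory of $\mathrm{D}^{\mathrm{b}}_{\mathrm{f}}(A)$, every object of $\langle\mathcal{P}\rangle$ has finite G-dimension. Density applied to $\bar{k}$ gives $X\in\mathcal{A}$ with $F(X)=[X]\cong[\bar{k}]$ in ${\rm D}_{sg}(A)$, and by the description of isomorphisms in a Verdier quotient there is $W\in\mathrm{D}^{\mathrm{b}}_{\mathrm{f}}(A)$ with exact triangles $W\to X\to C\rightsquigarrow$ and $W\to\bar{k}\to C'\rightsquigarrow$ where $C,C'\in\langle\mathcal{P}\rangle$. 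As $X$ (being in $\mathcal{A}$) and $C$ have finite G-dimension, so does $W$; hence so does $\bar{k}$. Theorem~\ref{thm:main}(3) then yields that $A$ is local Gorenstein.

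For the reverse implication ($A$ local Gorenstein $\Rightarrow$ $F$ dense), Theorem~\ref{thm:main}(3) gives $\mathrm{G}\textrm{-}{\rm gldim}A<\infty$, so every $N\in\mathrm{D}^{\mathrm{b}}_{\mathrm{f}}(A)$ has finite G-dimension. Note that $\mathrm{im}(F)$ is a full triangulated subcategory of ${\rm D}_{sg}(A)$, closed under both shifts (as $F$ is a triangle functor) and, since $F$ is fully faithful and $\underline{\mathcal{A}}$ is idempotent complete, thick; because the objects of ${\rm D}_{sg}(A)$ are exactly those of $\mathrm{D}^{\mathrm{b}}_{\mathrm{f}}(A)$, density follows once every such $N$ is shown to lie in $\mathrm{im}(F)$ ($N=0$ being trivial, take $N\neq0$). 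If $\mathrm{amp}N\geq\mathrm{amp}A$, pick a natural number $n$ with $\mathrm{G}\textrm{-}\mathrm{dim}_{A}N\leq n-\mathrm{sup}N$ (possible, $N$ having finite G-dimension) and choose by \cite[Proposition 2.26]{Mi18} an sppj resolution $X_{i+1}\to P_{i}\to X_{i}\rightsquigarrow$ of $N=X_{0}$ with all $P_{i}\in\mathcal{P}$; Theorem~\ref{thm:main}(1) gives $X_{n}\in\mathcal{G}_{0}[-\mathrm{sup}N]$, so $G:=X_{n}[\mathrm{sup}N]\in\mathcal{G}_{0}\subseteq\mathcal{A}$, while in ${\rm D}_{sg}(A)$ each $P_{i}$ vanishes and the triangles force $X_{n}\cong N[-n]$, whence $F(G)\cong N[\mathrm{sup}N-n]$ and $N\in\mathrm{im}(F)$ by shift-closedness. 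If $\mathrm{amp}N<\mathrm{amp}A$, pick a natural number $n$ with $\mathrm{G}\textrm{-}\mathrm{dim}_{A}N\leq n+\mathrm{inf}A-\mathrm{inf}N$; Theorem~\ref{thm:main}(2) places $N\oplus N[\mathrm{amp}A-\mathrm{amp}N]$ in $\mathcal{P}[-\mathrm{sup}N]\ast\cdots\ast\mathcal{P}[-\mathrm{sup}N+n-1]\ast\mathcal{G}_{0}[-\mathrm{sup}N+n]$, and in ${\rm D}_{sg}(A)$ all the $\mathcal{P}$-terms die, so $N\oplus N[\mathrm{amp}A-\mathrm{amp}N]\cong G[-\mathrm{sup}N+n]$ there for some $G\in\mathcal{G}_{0}\subseteq\mathcal{A}$; thus $N\oplus N[\mathrm{amp}A-\mathrm{amp}N]\in\mathrm{im}(F)$, and as $\mathrm{im}(F)$ is thick its summand $N$ lies in $\mathrm{im}(F)$. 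So $F$ is dense, hence a triangle equivalence.

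I expect the reverse direction to carry the real weight, with three points deserving care. First, the unconditional full faithfulness of $F$ rests on a usable notion of complete resolution for DG-modules of G-dimension $\leq0$; if this is not already developed in Section~\ref{subsection:4.3} it must be set up. Second, one has to convert the $\ast$-filtrations of Theorem~\ref{thm:main}(1),(2) into honest isomorphisms in ${\rm D}_{sg}(A)$ with the shifts tracked correctly, so that $N$ itself --- not just a suspension of it --- reappears in $\mathrm{im}(F)$. Third, and most delicate, the low-amplitude regime only produces $N\oplus N[\mathrm{amp}A-\mathrm{amp}N]$ in $\mathrm{im}(F)$, so one genuinely needs $\mathrm{im}(F)$ closed under direct summands, i.e. $\underline{\mathcal{A}}$ idempotent complete; proving this (e.g. from idempotent completeness of $\mathrm{D}^{\mathrm{b}}_{\mathrm{f}}(A)$ together with the Frobenius structure on $\mathcal{A}$) is the crux.
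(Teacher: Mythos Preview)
Your overall strategy is sound, but it diverges from the paper's route in a way that creates real work you correctly flag as unresolved. The paper does not attempt to prove full faithfulness of $F$ by a direct roof/complete-resolution argument. Instead it factors $F$ as
\[
\underline{\mathcal{A}}\ \xrightarrow{\ H\ }\ \mathcal{R}(A)/\langle\mathcal{P}\rangle\ \hookrightarrow\ \mathrm{D}^{\mathrm{b}}_{\mathrm{f}}(A)/\langle\mathcal{P}\rangle,
\]
and proves that $H$ is a triangle \emph{equivalence} unconditionally (Theorem~\ref{thm4.9}(2)) by invoking the abstract silting-reduction machinery of Wei \cite{we16} and Iyama--Yang \cite{ya13}: the only inputs needed are that $\mathcal{P}=\mathrm{add}A$ is presilting and functorially finite in $\mathcal{R}(A)$ and that Hom vanishes in large degrees (Lemma~\ref{lem3.4}), together with $\langle\mathcal{A}\rangle=\mathcal{R}(A)$ (Lemma~\ref{lem6.2}, whose proof is where Theorem~\ref{thm:main}(1),(2) actually enters). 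Once $H$ is an equivalence, the whole corollary collapses to whether the inclusion $\mathcal{R}(A)/\langle\mathcal{P}\rangle\hookrightarrow\mathrm{D}^{\mathrm{b}}_{\mathrm{f}}(A)/\langle\mathcal{P}\rangle$ is an equivalence, i.e.\ whether $\mathcal{R}(A)=\mathrm{D}^{\mathrm{b}}_{\mathrm{f}}(A)$, which is exactly Theorem~\ref{thm:main}(3). So the paper never needs complete resolutions, never needs to track shifts through $\ast$-filtrations in ${\rm D}_{sg}(A)$, and never needs idempotent completeness of $\underline{\mathcal{A}}$.

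Your three self-identified sticking points are therefore genuine gaps relative to what the paper provides. First, a notion of ``complete resolution by finitely generated free DG-modules'' for objects of G-dimension $\le 0$ is \emph{not} developed in Section~\ref{subsection:4.3}; you would have to build it, and in the DG setting this is nontrivial (you cannot simply splice acyclic complexes of projectives as over an ordinary ring). Second and third, your density argument in the low-amplitude case hinges on $\mathrm{im}(F)$ being thick, hence on idempotent completeness of $\underline{\mathcal{A}}$; this is plausible but not proved in the paper, and you correctly call it ``the crux''. The paper's factorization through $\mathcal{R}(A)/\langle\mathcal{P}\rangle$ bypasses all of this. Your forward implication ($F$ dense $\Rightarrow$ Gorenstein) via $\bar{k}$ is fine and essentially matches the paper's argument (the paper checks \emph{every} $M$ lands in $\mathcal{R}(A)$, but testing on $\bar{k}$ suffices by Theorem~\ref{thm:main}(3)).
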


Remark that if $A$ is a commutative noetherian local DG-algebra over a field $k$, a Gorenstein ring in our setting is a proper Gorenstein DG-algebra defined by Jin in \cite[Assumption 0.1]{jin20}. Therefore, over a commutative noetherian local non-positive DG algebra over a field $k$, Corollary \ref{cor1.6} conveys that the inverse of Buchweitz-Happel Theorem obtained by Jin \cite[Theorem 0.3]{jin20} is also true.

The article is organized as follows. In Section \ref{pre}, we introduce notation, definitions and
basic facts needed for proofs. Section \ref{proof} is devoted to giving the proof of Theorem \ref{thm:main}. In Section 4, we apply Theorem \ref{thm:main} to establish the connection between G-dimensions and the little finitistic dimensions of $A$, to obtain a DG version of Fact \ref{fact:1.1}, and to show a DG-version of Buchweitz-Happel Theorem and its inverse over $A$, and therefore prove the three corollaries.

\section{Preliminaries}\label{pre}
An \emph{associative DG-ring} $A$ is a $\mathbb{Z}$-graded ring
$A=\bigoplus_{i\in\mathbb{Z}}A^i$
equipped with a $\mathbb{Z}$-linear map $d:A\rightarrow A$ of degree +1 such that $d\circ d=0$, and such that
\begin{equation}\label{eq:2.1}d(a\cdot b)=d(a)\cdot b+(-1)^{i}a\cdot d(b)
\end{equation}
for all $a\in A^i$ and $b\in A^j$. A DG-ring $A$ is
called \emph{commutative} if for all $a\in A^i$ and $b\in A^j$, one has $b\cdot a=(-1)^{i\cdot j}a\cdot b$, and $a^2=0$
if $i$ is odd.
A DG-ring $A$ is called \emph{non-positive} if $A^i=0$ for all $i>0$.
 A non-positive DG-ring $A$ is called \emph{noetherian} if the
ring $\mathrm{H}^0(A)$ is noetherian and the $\mathrm{H}^0(A)$-modules $\mathrm{H}^i(A)$ are
finitely generated for all $i<0$.  \textbf{All DG-rings in this paper will be assumed
to be commutative and non-positive.}

A DG-module $M$ over $A$ is a graded $A$-module together with a differential of degree +1 satisfying the
Leibniz rule similar to \eqref{eq:2.1}. The DG-$A$-modules form an abelian category, and by inverting the quasi-isomorphisms we obtain the derived category $\mathrm{D}(A)$ which is triangulated. We refer the reader to \cite{ye20} for more details about DG-rings and their derived categories.

If $A$ is a noetherian DG-ring, we say that $M \in\mathrm{D}(A)$ has finitely generated
cohomology if for all $n \in \mathbb{Z}$, the $\mathrm{H}^{0}(A)$-modules $\mathrm{H}^{n}(M)$ are finitely generated. We
denote by $\mathrm{D}_{\mathrm{f}}(A)$ the full triangulated subcategory of $\mathrm{D}(A)$ consisting of DG-modules
with finitely generated cohomology. We also set $\mathrm{D}^{-}_{\mathrm{f}}(A) = \mathrm{D}_{\rm f}(A)\cap \mathrm{D}^{-}(A)$. Similarly we will consider $\mathrm{D}^{+}_{\mathrm{f}}(A)$ and $\mathrm{D}^{\mathrm{b}}_{\mathrm{f}}(A)$. All these are full triangulated subcategories of $\mathrm{D}(A)$.
If $A$ is a noetherian DG-ring, and if the noetherian ring $\mathrm{H}^{0}(A)$ is local
with maximal ideal $\bar{\mm}$ and residue field $\bar{k}$, we will say that $(A,\bar{\mm})$ (or $(A,\bar{\mm},\bar{k})$) is a noetherian local DG-ring.

We recall the definition of the projective and injective dimensions of DG-modules introduced by Bird, Shaul, Sridhar and Williamson.

\begin{definition}\label{df:2.1} $($\cite[Definition 2.1]{BSSW}$)$ Let $A$ be a DG-ring and $M \in \mathrm{D}(A)$.
\begin{enumerate}
\item The \emph{projective dimension} of $M$ is defined by
$$\mathrm{projdim}_AM=\mathrm{inf}\{n\in\mathbb{Z}\mid\mathrm{Ext}^i_A(M,N)=0\ \textrm{for\ any}\ N\in\mathrm{D}^\mathrm{b}(A)\ \textrm{and\ any}\ i>n+\mathrm{sup}N\}.$$

\item The \emph{injective dimension} of $M$ is defined by
$$\mathrm{injdim}_AM=\mathrm{inf}\{n\in\mathbb{Z}\mid\mathrm{Ext}^i_A(N,M)=0\ \textrm{for\ any}\ N\in\mathrm{D}^\mathrm{b}(A)\ \textrm{and\ any}\ i>n-\mathrm{inf}N\}.$$
\end{enumerate}
\end{definition}

Following \cite{s19}, let us
recall the notion of local cohomology and local homology functors over commutative DG-rings. Let $A$ be a commutative DG-ring, and let $\bar{\mathfrak{a}}\subseteq \mathrm{H}^{0}(A)$ be a finitely generated ideal. The category of derived $\bar{\mathfrak{a}}$-torsion DG-modules over $A$, denoted by
$\mathrm{D}_{\bar{\mathfrak{a}}\text{-}\mathrm{tor}}(A)$, is the full triangulated subcategory of $\mathrm{D}(A)$ consisting of DG-modules $M$ such that the $\mathrm{H}^{0}(A)$-modules $\mathrm{H}^{n}(M)$ are $\bar{\mathfrak{a}}$-torsion for all $n\in \mathbb{Z}$. One can show that the inclusion functor
$F : \mathrm{D}_{\bar{\mathfrak{a}}\text{-}\mathrm{tor}}(A)\rightarrow \mathrm{D}(A)$
has a right adjoint
$G : \mathrm{D}(A) \rightarrow \mathrm{D}_{\bar{\mathfrak{a}}\text{-}\mathrm{tor}}(A)$,
and composing this right adjoint with the inclusion, one obtains a triangulated functor
$$\mathrm{R}\Gamma_{\bar{\mathfrak{a}}} : \mathrm{D}(A) \rightarrow \mathrm{D}(A),$$
which we call the \emph{derived torsion} or \emph{local cohomology functor} of $A$ with respect to
$\bar{\mathfrak{a}}$. The functor $\mathrm{R}\Gamma_{\bar{\mathfrak{a}}}$ has a right adjoint which is denoted by
$$\mathrm{L}\Lambda_{\bar{\mathfrak{a}}} : \mathrm{D}(A) \rightarrow \mathrm{D}(A),$$
This functor is called the \emph{local homology} or \emph{derived completion functor} with respect to
$\bar{\mathfrak{a}}$.

We recall the following definitions introduced by Shaul in \cite{Shaul}.

\begin{definition}Let $(A,\bar{\mm})$  be a noetherian local DG-ring.

\begin{enumerate}
\item We define the \emph{local cohomology Krull dimension}
of $M\in\mathrm{D}^{-}(A)$ to be
$$\mathrm{lc.dim}_AM:= \sup_{l\in \mathbb{Z}}\{\mathrm{dim}(\mathrm{H}^{l}(M)) + l\}.$$

\item We define the \emph{depth} of $N\in\mathrm{D}^{+}(A)$ to be the number
$$\text{\rm depth}_{A}N:= \inf\mathrm{RHom}_{A}(\bar{k},N).$$
\end{enumerate}
\end{definition}

\begin{definition} \label{def} Let $(A,\bar{\mm})$  be a noetherian local DG-ring with $\mathrm{amp}A<\infty$.

\begin{enumerate}
\item  $A$ is called \emph{local Cohen-Macaulay} if $\mathrm{amp}\mathrm{R}\Gamma_{\bar{\mm}}(A) = \mathrm{amp}A$.

\item We say that $M \in \mathrm{D}^{\mathrm{b}}_{\mathrm{f}}(A)$ is a
\emph{local-Cohen-Macaulay DG-module} if there are equalities
$$\mathrm{amp}M = \mathrm{amp}A = \mathrm{amp}\mathrm{R}\Gamma_{\bar{\mm}}(M).$$

\item A local-Cohen-Macaulay DG-module $M$ is \emph{maximal local-Cohen-Macaulay} if
$$\mathrm{lc.dim}_AM= \sup M + \mathrm{dim}\mathrm{H}^{0}(A).$$
 We denote by $\mathcal{M}$ the full subcategory of maximal local-Cohen-Macaulay DG-modules.
\end{enumerate}
\end{definition}

\textbf{From now until the end of the paper, we always assume that $(A,\bar{\mathfrak{m}},\bar{k})$ is a commutative noetherian local DG-ring with $\mathrm{amp}A<\infty$.}

\section{Proof of Theorem \ref{thm:main}}\label{proof}
This section is devoted to a proof of the statements of Theorem \ref{thm:main}. We start with
the following proposition.

\begin{prop}\label{lem3.5} Let $X'\rightarrow X\rightarrow X''\rightarrow X'[1]$ be an exact triangle in
$\mathrm{D}^{\mathrm{b}}_{\mathrm{f}}(A)$. If two of the DG-modules belong to $\mathcal{R}(A)$, then so is the third.
\end{prop}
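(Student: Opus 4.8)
The plan is to use the two-out-of-three property for triangulated categories applied twice: once to the functor $\mathrm{RHom}_A(-,A)$ and once to the biduality map. Recall that $X \in \mathcal{R}(A)$ means two things: first, $\mathrm{RHom}_A(X,A) \in \mathrm{D}^{\mathrm{b}}_{\mathrm{f}}(A)$, and second, the canonical morphism $\delta_X : X \to \mathrm{RHom}_A(\mathrm{RHom}_A(X,A),A)$ is an isomorphism. So I must verify both conditions for the third DG-module, given they hold for the other two.

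First I would handle the finiteness condition. Applying the contravariant functor $\mathrm{RHom}_A(-,A)$ to the exact triangle $X' \to X \to X'' \to X'[1]$ produces an exact triangle $\mathrm{RHom}_A(X'',A) \to \mathrm{RHom}_A(X,A) \to \mathrm{RHom}_A(X',A) \to \mathrm{RHom}_A(X'',A)[1]$. Since $\mathrm{D}^{\mathrm{b}}_{\mathrm{f}}(A)$ is a triangulated subcategory of $\mathrm{D}(A)$, if two of $\mathrm{RHom}_A(X',A)$, $\mathrm{RHom}_A(X,A)$, $\mathrm{RHom}_A(X'',A)$ lie in $\mathrm{D}^{\mathrm{b}}_{\mathrm{f}}(A)$, then so does the third; this is exactly the hypothesis that two of $X',X,X''$ are reflexive (hence their duals are in $\mathrm{D}^{\mathrm{b}}_{\mathrm{f}}(A)$), so the third dual is also in $\mathrm{D}^{\mathrm{b}}_{\mathrm{f}}(A)$. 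Applying $\mathrm{RHom}_A(-,A)$ a second time to this triangle gives another exact triangle among the double duals $\mathrm{RHom}_A(\mathrm{RHom}_A(X',A),A)$, etc. The biduality morphisms $\delta_{X'},\delta_X,\delta_{X''}$ fit into a morphism of exact triangles, by naturality of $\delta$ with respect to morphisms in $\mathrm{D}(A)$ (and one must check the third square, involving the connecting morphism, commutes — this follows from functoriality of the construction of $\delta$). Now two of the three vertical morphisms $\delta_{X'},\delta_X,\delta_{X''}$ are isomorphisms by hypothesis, so by the five-lemma for triangulated categories the third is an isomorphism as well.

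The main obstacle I anticipate is the compatibility of the biduality natural transformation with the connecting morphisms, i.e. showing that the relevant square in the morphism of triangles genuinely commutes, since the connecting morphism $X'' \to X'[1]$ is not of the form $\mathrm{H}^*$ of a chain map in an obvious way. This requires knowing that $\delta : \mathrm{id} \to \mathrm{RHom}_A(\mathrm{RHom}_A(-,A),A)$ is a morphism of triangulated functors (commuting with shifts and triangles), which is standard once one fixes a suitable model (e.g. using a K-projective or semi-free resolution of $A$, or working with a DG-enhancement so that $\mathrm{RHom}_A(-,A)$ is genuinely a DG-functor). I would invoke this formal fact rather than reprove it. A minor subtlety is bookkeeping the shift: one should note that if $Y \in \mathcal{R}(A)$ then $Y[1] \in \mathcal{R}(A)$, which is immediate since $\mathrm{RHom}_A(-,A)$ and the biduality map both commute with shifts; this lets one rotate the triangle freely so that "two of the three" can be taken in any cyclic position.
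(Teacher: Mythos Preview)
Your proposal is correct and follows essentially the same approach as the paper: apply $\mathrm{RHom}_A(-,A)$ to the triangle to handle the boundedness condition via the two-out-of-three property in $\mathrm{D}^{\mathrm{b}}_{\mathrm{f}}(A)$, then set up the morphism of triangles given by the biduality maps and invoke the triangulated five-lemma. If anything, you are more scrupulous than the paper, which simply asserts the commutative diagram without commenting on the compatibility of $\delta$ with the connecting morphism.
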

\begin{proof} If two of $\mathrm{RHom}_{A}(X',A),\mathrm{RHom}_{A}(X,A)$ and $\mathrm{RHom}_{A}(X'',A)$ are in $\mathrm{D}^{\mathrm{b}}_{\mathrm{f}}(A)$,
then so is the third. Using the abbreviated notation $((-, A),A):= \mathrm{RHom}_{A}(\mathrm{RHom}_{A}(-,A),A)$, we have
the following commutative diagram of {exact triangles} in $\mathrm{D}^{\mathrm{b}}_{\mathrm{f}}(A)$:
$$\xymatrix@C=20pt@R=18pt{
  X' \ar[d]_{\delta^{X'}} \ar[r]^{} & X \ar[d]_{\delta^{X}} \ar[r]^{} & X'' \ar[d]_{\delta^{X''}} \ar[r]^{} & X'[1] \ar[d]_{\delta^{X'}[1]} \\
  ((X', A),A) \ar[r]^{} &((X, A),A) \ar[r]^{} & ((X'', A),A) \ar[r]^{} & ((X', A),A)[1].}$$
Therefore, if two of the morphisms $\delta^{X'},\delta^{X}$ and $\delta^{X''}$ are isomorphisms,
then so is the third. The rest follows from the fact that $\mathrm{D}^{\mathrm{b}}_{\mathrm{f}}(A)$ is the full triangulated subcategory of $\mathrm{D}(A)$.
\end{proof}

{The following lemma was proven in a greater generality by P. J${\o}$rgensen in \cite{J}.}

\begin{lem}\label{lem0.2}{\rm (\cite[Theorem 0.2]{J})} Let $0\not\simeq X\in\mathrm{D}^{\mathrm{b}}_{\mathrm{f}}(A)$ with $\mathrm{projdim}_AX<\infty$. Then $\mathrm{amp}X\geq\mathrm{amp}A$.
\end{lem}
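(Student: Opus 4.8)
The plan is to pass to a normalized dualizing DG-module and translate the statement about $\mathrm{projdim}_AX$ into a statement about depth and local-cohomology Krull dimension, where the Auslander--Buchsbaum-type formula does the work. First I would recall that since $(A,\bar{\mathfrak m},\bar k)$ is a commutative noetherian local DG-ring with $\mathrm{amp}A<\infty$, it admits a normalized dualizing DG-module $R$ (this is the setup of \cite{FIJ2003}, and it is the ambient hypothesis under which Lemma \ref{lem0.1} is stated). The key computational inputs are the four identities of Lemma \ref{lem0.1}, together with the standard fact that for $0\not\simeq X\in\mathrm{D}^{\mathrm b}_{\mathrm f}(A)$ with $\mathrm{projdim}_AX<\infty$ one has the Auslander--Buchsbaum equality
$$\mathrm{projdim}_AX=\mathrm{depth}_AA-\mathrm{depth}_AX$$
(the DG-version, as established in \cite{BSSW}), and the analogous fact controlling the top: $\mathrm{lc.dim}_AX\le \mathrm{lc.dim}_AA=\mathrm{dim}\mathrm{H}^0(A)$ when $\mathrm{projdim}_AX<\infty$ — indeed $X$ is then built from finitely many shifts of copies of $A$, so $\mathrm{R}\Gamma_{\bar{\mathfrak m}}X$ is built from shifts of $\mathrm{R}\Gamma_{\bar{\mathfrak m}}A$.

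The main steps, in order. (1) Apply $\mathrm{RHom}_A(-,R)$ and set $Y:=\mathrm{RHom}_A(X,R)\in\mathrm{D}^{\mathrm b}_{\mathrm f}(A)$; by Lemma \ref{lem0.1}, $\mathrm{sup}Y=-\mathrm{depth}_AX$ and $\mathrm{inf}Y=-\mathrm{lc.dim}_AX$, so $\mathrm{amp}Y=\mathrm{lc.dim}_AX-\mathrm{depth}_AX$; applying the same to $A$ in place of $X$ (taking $X=A$, which has $\mathrm{projdim}_AA=0<\infty$), $\mathrm{amp}R=\mathrm{lc.dim}_AA-\mathrm{depth}_AA=\mathrm{dim}\mathrm{H}^0(A)-\mathrm{depth}_AA$. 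Also $\mathrm{amp}X=\mathrm{amp}\,\mathrm{RHom}_A(Y,R)$, and by another pass through Lemma \ref{lem0.1}, $\mathrm{amp}X=\mathrm{lc.dim}_AX - \mathrm{depth}_AX$ as well — so in fact $\mathrm{amp}X=\mathrm{amp}Y$; but this symmetric route does not immediately separate the two sides, so instead I would argue directly. (2) Compute $\mathrm{sup}X - \mathrm{inf}X$: we have $\mathrm{inf}X=-\mathrm{lc.dim}_AR$ applied appropriately — more straightforwardly, use $\mathrm{depth}_AX=\mathrm{depth}_AA-\mathrm{projdim}_AX$ and the inequality $\mathrm{lc.dim}_AX\le \mathrm{dim}\mathrm{H}^0(A)$. (3) Now invoke the general inequality valid for every $0\not\simeq X\in\mathrm{D}^{\mathrm b}_{\mathrm f}(A)$: $\mathrm{depth}_AX\le \mathrm{lc.dim}_AX$ and, combined with the two facts above, compute
$$\mathrm{amp}X \;\ge\; \mathrm{lc.dim}_AX-\mathrm{depth}_AX \;=\;\mathrm{lc.dim}_AX-\mathrm{depth}_AA+\mathrm{projdim}_AX,$$
wait — one must be careful, since $\mathrm{amp}X=\mathrm{sup}X-\mathrm{inf}X$ and these need not literally equal $\mathrm{lc.dim}$ minus $\mathrm{depth}$ for $X$ itself. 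The clean route is: $\mathrm{inf}X\ge \mathrm{depth}_AA-\mathrm{dim}\mathrm{H}^0(A)+\mathrm{inf}A$-type bookkeeping via $Y$; concretely, $-\mathrm{inf}X=\mathrm{lc.dim}_AY=\mathrm{sup}_l\{\mathrm{dim}\mathrm{H}^l(Y)+l\}\ge -\mathrm{sup}Y+ \text{(dimension term)}$, and one compares against $-\mathrm{inf}A=\mathrm{lc.dim}_AR$. Assembling: $\mathrm{amp}X=\mathrm{sup}X-\mathrm{inf}X\ge(\mathrm{depth}_AA-\mathrm{depth}_AX)+(\mathrm{something}\ge \mathrm{amp}A-\mathrm{depth}_AA+\mathrm{depth}_AX)$... the point is that the two dualizing-module identities for $X$ and for $A=\mathrm{RHom}_A(R,R)$ pin down $\mathrm{sup}X$ and $\mathrm{inf}X$ in terms of $\mathrm{depth}_A(-)$ and $\mathrm{lc.dim}_A(-)$ of the duals, and the Auslander--Buchsbaum equality plus $\mathrm{lc.dim}_A(\mathrm{RHom}_A(X,R))=-\mathrm{inf}X\ge \mathrm{inf}$-bound against $A$ forces $\mathrm{amp}X\ge\mathrm{amp}A$.

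The step I expect to be the real obstacle is (3): correctly comparing the local-cohomology Krull dimensions, since $\mathrm{lc.dim}_AX$ involves a supremum over all cohomological degrees weighted by Krull dimensions of $\mathrm{H}^l$, not merely $\mathrm{sup}X$; one needs that tensoring/building $X$ from copies of $A$ cannot make $\mathrm{lc.dim}_AX$ exceed $\mathrm{lc.dim}_AA=\mathrm{dim}\mathrm{H}^0(A)$, and simultaneously that $\mathrm{inf}X$ is not too large. I would handle this by writing $X$ (after choosing a semifree resolution witnessing finite projective dimension) as a finite iterated extension of shifts $A[j]$ with $0\le j\le \mathrm{projdim}_AX$, applying $\mathrm{R}\Gamma_{\bar{\mathfrak m}}$, and reading off that $\mathrm{R}\Gamma_{\bar{\mathfrak m}}X$ has amplitude at least $\mathrm{amp}\,\mathrm{R}\Gamma_{\bar{\mathfrak m}}A=\mathrm{amp}A$ (the last equality being a standard property of the dualizing module, equivalently $\mathrm{amp}R=\mathrm{amp}A$, which follows since $R$ is a shift of the Matlis dual picture and $A$ is local with $\mathrm{amp}A<\infty$) together with $\mathrm{amp}X=\mathrm{amp}\,\mathrm{R}\Gamma_{\bar{\mathfrak m}}X$ coming from Lemma \ref{lem0.1}'s two pairs of identities. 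Once the amplitude of $\mathrm{R}\Gamma_{\bar{\mathfrak m}}X$ is bounded below by $\mathrm{amp}A$ and translated back through $\mathrm{RHom}_A(-,R)$, the inequality $\mathrm{amp}X\ge\mathrm{amp}A$ drops out. A final remark: the edge case $X=0$ is excluded by hypothesis ($0\not\simeq X$), so all invariants above are genuine integers and the manipulations are legitimate.
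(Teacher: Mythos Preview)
Your proposal does not constitute a proof; it cycles through several approaches without completing any, and the route you finally settle on contains false claims. Concretely: (a) you assert that $(A,\bar{\mathfrak m})$ automatically has a dualizing DG-module, but this is not true in general---the paper first passes to the derived completion $B=\mathrm{L}\Lambda_{\bar{\mathfrak m}}(A)$, which does have one by \cite[Proposition~7.21]{s18}, and checks that $\mathrm{amp}$ and $\mathrm{projdim}$ are unchanged; (b) your claimed identity $\mathrm{amp}\,\mathrm{R}\Gamma_{\bar{\mathfrak m}}A=\mathrm{amp}A$ is precisely the definition of $A$ being local Cohen--Macaulay (Definition~\ref{def}(1)), not a general fact; (c) the identity $\mathrm{amp}X=\mathrm{amp}\,\mathrm{R}\Gamma_{\bar{\mathfrak m}}X$ you extract ``from Lemma~\ref{lem0.1}'' is not there---Lemma~\ref{lem0.1} gives $\mathrm{amp}\,\mathrm{R}\Gamma_{\bar{\mathfrak m}}X=\mathrm{amp}\,\mathrm{RHom}_A(X,R)$ and $\mathrm{amp}X=\mathrm{amp}\,\mathrm{R}\Gamma_{\bar{\mathfrak m}}\mathrm{RHom}_A(X,R)$, which are different quantities; (d) the ``iterated extensions of shifts of $A$'' argument cannot give a lower bound on amplitude, since amplitude is not monotone under extensions (cancellation can and does occur).

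The idea you are missing, and which the paper uses, is the tensor--evaluation isomorphism $\mathrm{RHom}_A(X,R)\simeq\mathrm{RHom}_A(X,A)\otimes_A^{\mathrm L}R$, valid because $\mathrm{projdim}_AX<\infty$ (\cite[Proposition~1.16]{BSSW}). This lets one compute $\mathrm{lc.dim}_A\mathrm{RHom}_A(X,R)$ directly as $\mathrm{lc.dim}_AR+\sup\mathrm{RHom}_A(X,A)=\mathrm{lc.dim}_AR+\mathrm{projdim}_AX$ (the last equality by \cite[Proposition~4.4(3)]{ya20}). Combining with Lemma~\ref{lem0.1} (which gives $\mathrm{lc.dim}_A\mathrm{RHom}_A(X,R)=-\inf X$ and $\mathrm{depth}_A\mathrm{RHom}_A(X,R)=-\sup X$) and with $\mathrm{amp}\,\mathrm{R}\Gamma_{\bar{\mathfrak m}}R=\mathrm{amp}A$ from \cite[Theorem~7.26]{s18}, one gets
\[
\mathrm{amp}X=\mathrm{lc.dim}_AR-\mathrm{depth}_AR+\mathrm{projdim}_AX+\sup X\geq \mathrm{amp}\,\mathrm{R}\Gamma_{\bar{\mathfrak m}}R=\mathrm{amp}A,
\]
the inequality because $\mathrm{projdim}_AX\geq -\sup X$ always holds. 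Your Auslander--Buchsbaum instinct is in the right spirit, but the decisive computation goes through $\mathrm{lc.dim}$ of the $R$-dual via the tensor identity, not through depth alone.
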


\begin{lem}\label{lem0.3} If $X\in\mathcal{R}(A)$ with $\mathrm{amp}X\geq\mathrm{amp}A$, then $\mathrm{G}\textrm{-}\mathrm{dim}_AX\geq-\mathrm{sup}X$.
\end{lem}
\begin{proof} By assumption, one has the following (in)equalities
\begin{center}$\begin{aligned}\mathrm{sup}X&\geq\mathrm{inf}X-\mathrm{inf}A\\
&=\mathrm{inf}\mathrm{RHom}_A(\mathrm{RHom}_A(X,A),A)-\mathrm{inf}A\\
&\geq\mathrm{inf}A-\mathrm{sup}\mathrm{RHom}_A(X,A)-\mathrm{inf}A\\
&=-\mathrm{sup}\mathrm{RHom}_A(X,A),\end{aligned}$\end{center}
where {the first inequality is by assumption and the second one is by \cite[Lemma 3.2(i)]{BSSW}}, which implies that $\mathrm{G}\textrm{-}\mathrm{dim}_AX=\mathrm{sup}\mathrm{RHom}_A(X,A)\geq-\mathrm{sup}X$.
\end{proof}

We thank Dong Yang for communicating to us the next example which implies that $\mathrm{G}\textrm{-}\mathrm{dim}_AX\geq-\mathrm{sup}X$ need not hold in the DG-setting in general.

\begin{Ex}\label{Ex1}{\rm Let $A=k[x]/(x^2)$ considered as a DG $k$-algebra, where $k$ is a field and deg$(x)=-1$. Let $S=A/(x)$, which concentrated in degree 0. Then $S$ has a sppj resolution
 $$\cdots\rightarrow A[2]\s{x}\rightarrow A[1]\s{x}\rightarrow A\rightarrow S\rightarrow0.$$
 Its  total complex $F$ is a minimal semi-free resolution of $S$. Now we can deduce that the DG-module $\mathrm{Hom}(F,A)$ is the total complex of  $$0\rightarrow A\s{x}\rightarrow A[-1]\s{x}\rightarrow A[-2]\rightarrow\cdots.$$ Then $\mathrm{RHom}(S,A)\simeq\mathrm{Hom}(F,A)\simeq S[1]$ and $S\in\mathcal{R}(A)$. So in this case, $\sup S=0$, but $\mathrm{G}\textrm{-}\mathrm{dim}_AS=\sup\mathrm{RHom}(S,A)=-1$.}
\end{Ex}
\begin{definition}\label{def:1.2'} A DG-module $X\in\mathcal{R}(A)$ is said to be in the G-class $\mathcal{G}$ if either $\mathrm{G}\textrm{-}\mathrm{dim}_AX=-\mathrm{sup}X$, or $X=0$, and we denote
 by $\mathcal{G}_0$ the full subcategory of $\mathcal{G}$ consisting of objects $G$ such that either $\mathrm{amp}G\geq\mathrm{amp}A$ and $\mathrm{G}\textrm{-}\mathrm{dim}_AG=-\mathrm{sup}G=0$, or $G=0$.
\end{definition}


\begin{rem}\label{lem:3.1}{\rm (1) Let $X\rightarrow Y\rightarrow Z\rightsquigarrow$ be an exact triangle in $\mathcal{R}(A)$. The exact triangle $\mathrm{RHom}_A(Z,A)\rightarrow \mathrm{RHom}_A(Y,A)\rightarrow \mathrm{RHom}_A(X,A)\rightsquigarrow$ yields that
\begin{center}$\mathrm{G}\textrm{-}\mathrm{dim}_AY\leq\mathrm{max}\{\mathrm{G}\textrm{-}\mathrm{dim}_AX,
\mathrm{G}\textrm{-}\mathrm{dim}_AZ\}$.\end{center}

(2) For $0\not\simeq X\in\mathcal{R}(A)$ with $\mathrm{amp}X\geq\mathrm{amp}A$, it follows from Lemma \ref{lem0.3} that $\mathrm{G}\textrm{-}\mathrm{dim}_AX=-\mathrm{sup}X$ if and only if $X\in\mathcal{G}_0[-\mathrm{sup}X]$.

(3) For any $i\in\mathbb{Z}$, one has $\mathcal{P}[i]\subseteq \mathcal{G}_0[i]$ by \cite[Lemma 2.14]{Mi18} and Lemma \ref{lem0.2}.

(4) For any $X\in\mathcal{R}(A)$ and $n\in\mathbb{Z}$, one has $\mathrm{G}\textrm{-}\mathrm{dim}_AX[n]=\mathrm{G}\textrm{-}\mathrm{dim}_AX+n$.

(5) {For any $X\in\mathrm{D}^\mathrm{b}(A)$}, there is a sppj $f:P\rightarrow X$ and an exact triangle $Y\rightarrow P\rightarrow X\rightsquigarrow$. If $\mathrm{amp}Y<\mathrm{amp}A$, then $(f,0):P\oplus P\rightarrow X$ is also a sppj morphism such that the triangle $Y\oplus P\rightarrow P\oplus P\rightarrow X\rightsquigarrow$ is exact and $\mathrm{amp}(Y\oplus P)\geq\mathrm{amp}A$ by Lemma \ref{lem0.2}.}
\end{rem}

We give several useful properties about sppj morphisms.

\begin{lem}\label{lem:3.01} {\it{Let $0\not\simeq X\in\mathcal{R}(A)$ with $\mathrm{amp}X\geq\mathrm{amp}A$ and $f:P\rightarrow X$ a sppj morphism with $Y=\mathrm{cn}(f)[-1]$. If $\mathrm{G}\textrm{-}\mathrm{dim}_AX+\mathrm{sup}X\geq1$, then
$\mathrm{G}\textrm{-}\mathrm{dim}_AY=\mathrm{G}\textrm{-}\mathrm{dim}_AX-1$.}}
\end{lem}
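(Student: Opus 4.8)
The plan is to unwind the definition of a sppj morphism, reduce to the situation where $A$ is derived $\bar{\mathfrak m}$-adic complete (so that a normalized dualizing DG-module $R$ exists), and then compute $\mathrm{G}\textrm{-}\mathrm{dim}_AY$ via the long exact sequence obtained by applying $\mathrm{RHom}_A(-,A)$ to the defining triangle. First I would observe that, since $f:P\to X$ is a sppj morphism with $P\in\mathrm{Add}A[-\mathrm{sup}X]$, the exact triangle $Y\to P\to X\rightsquigarrow$ (where $Y=\mathrm{cn}(f)[-1]$) lies in $\mathrm{D}^{\mathrm b}_{\mathrm f}(A)$; by Remark \ref{lem:3.1}(3) one has $P\in\mathcal{G}_0[-\mathrm{sup}X]\subseteq\mathcal{R}(A)$, and since $X\in\mathcal{R}(A)$, Proposition \ref{lem3.5} forces $Y\in\mathcal{R}(A)$ as well. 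Hence $\mathrm{G}\textrm{-}\mathrm{dim}_AY$ is defined and equals $\mathrm{sup}\,\mathrm{RHom}_A(Y,A)$.

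Next I would examine the triangle $\mathrm{RHom}_A(X,A)\to\mathrm{RHom}_A(P,A)\to\mathrm{RHom}_A(Y,A)\rightsquigarrow$. Here $\mathrm{RHom}_A(P,A)\in\mathrm{Add}A[\mathrm{sup}X]$, so $\mathrm{sup}\,\mathrm{RHom}_A(P,A)=\mathrm{sup}A+\mathrm{sup}X$; set $s:=\mathrm{sup}A+\mathrm{sup}X$. By hypothesis $\mathrm{G}\textrm{-}\mathrm{dim}_AX=\mathrm{sup}\,\mathrm{RHom}_A(X,A)=:g\geq1-\mathrm{sup}X$. The long exact cohomology sequence then gives $\mathrm{H}^i(\mathrm{RHom}_A(Y,A))\cong\mathrm{H}^{i+1}(\mathrm{RHom}_A(X,A))$ for all $i>\max\{g,s\}$, and I want to conclude $\mathrm{sup}\,\mathrm{RHom}_A(Y,A)=g-1$. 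The key point needing care is that the connecting map $\mathrm{H}^{g}(\mathrm{RHom}_A(Y,A))\to\mathrm{H}^{g+1}(\mathrm{RHom}_A(X,A))$ is actually an \emph{isomorphism in top degree}, i.e.\ that the previous term $\mathrm{H}^{g+1}(\mathrm{RHom}_A(P,A))$ vanishes and contributes nothing. Since $g\geq1-\mathrm{sup}X$ we have $g+1\geq2-\mathrm{sup}X>\mathrm{sup}A+\mathrm{sup}X=s$ \emph{provided} $\mathrm{sup}A+2\mathrm{sup}X<2$; this is where I expect the real obstacle to lie, because in general $\mathrm{sup}X$ can be large, so $s$ can exceed $g+1$. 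I anticipate that the resolution of this is precisely the hypothesis $\mathrm{amp}X\geq\mathrm{amp}A$ together with Corollary \ref{lem0.3}, which pins down $g\geq-\mathrm{sup}X$ and, more importantly, via the dualizing-module computation of Lemma \ref{lem0.1} and Lemma \ref{lem0.2}/Corollary \ref{lem0.3}, controls how $g$ interacts with $\mathrm{sup}A$ and $\mathrm{sup}X$; one should get $g+\mathrm{sup}X\geq\mathrm{amp}A-\mathrm{amp}X+\mathrm{something}$ so that $g>s-1$.

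So the technical heart of the argument is: reduce to $A$ complete with normalized dualizing $R$; use $\mathrm{RHom}_A(X,R)\simeq\mathrm{RHom}_A(X,A)\otimes^{\mathrm L}_AR$ (as in the proof of Lemma \ref{lem0.2}) to translate $\mathrm{sup}\,\mathrm{RHom}_A(X,A)=g$ and $\mathrm{sup}\,\mathrm{RHom}_A(Y,A)$ into statements about $\mathrm{lc.dim}$ and $\mathrm{depth}$ of the corresponding $R$-duals; and then the equality $\mathrm{G}\textrm{-}\mathrm{dim}_AY=g-1$ becomes a matching of depths/local-cohomology-dimensions across the triangle $Y\to P\to X\rightsquigarrow$ after applying $\mathrm{R}\Gamma_{\bar{\mathfrak m}}$. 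The inequality $\mathrm{G}\textrm{-}\mathrm{dim}_AY\leq g-1$ should follow fairly directly from the long exact sequence and the surjectivity of $\mathrm{H}^{\mathrm{sup}X}(f)$ (which guarantees $\mathrm{H}^{\mathrm{sup}X+1}$ of the triangle behaves as expected, so $Y$ has no cohomology in the top degree of $X$ — hence $\mathrm{sup}Y\leq\mathrm{sup}X$, and in fact the sppj property makes $Y$'s $R$-dual lose one degree at the top). The reverse inequality $\mathrm{G}\textrm{-}\mathrm{dim}_AY\geq g-1$ is the one where the vanishing $\mathrm{H}^{g+1}(\mathrm{RHom}_A(P,A))=0$ is needed, and I would derive that vanishing from $g+1>\mathrm{sup}A+\mathrm{sup}X$, establishing this strict inequality by combining $g\geq1-\mathrm{sup}X$ with the further lower bound on $g$ coming from $\mathrm{amp}X\geq\mathrm{amp}A$ via Corollary \ref{lem0.3} (which in the complete case unwinds to $g=\mathrm{sup}\,\mathrm{RHom}_A(X,A)=\mathrm{amp}X-\mathrm{amp}A+\mathrm{depth}_AR-\mathrm{lc.dim}_AR-\mathrm{sup}X+\dots$, forcing $g$ large enough). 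Finally I would note that $\mathrm{amp}Y\geq\mathrm{amp}A$ need not hold, but that is harmless: $Y$ is still reflexive and the formula $\mathrm{G}\textrm{-}\mathrm{dim}_AY=g-1$ is proved directly from the triangle rather than via Corollary \ref{lem0.3} applied to $Y$.
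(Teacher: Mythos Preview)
Your basic strategy—apply $\mathrm{RHom}_A(-,A)$ to the triangle $Y\to P\to X\rightsquigarrow$ and read off $\mathrm{sup}\,\mathrm{RHom}_A(Y,A)$ from the long exact cohomology sequence—is exactly what the paper does. The problem is a sign error that sends you off on an unnecessary detour.

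You write $\mathrm{RHom}_A(P,A)\in\mathrm{Add}A[\mathrm{sup}X]$ and conclude $\mathrm{sup}\,\mathrm{RHom}_A(P,A)=\mathrm{sup}A+\mathrm{sup}X$. Under the paper's cohomological convention (consistent with Remark~\ref{lem:3.1}(4)), one has $\mathrm{sup}(M[n])=\mathrm{sup}M-n$, so in fact $\mathrm{sup}\,\mathrm{RHom}_A(P,A)=\mathrm{sup}A-\mathrm{sup}X=-\mathrm{sup}X$, not $+\mathrm{sup}X$. With the correct value $s=-\mathrm{sup}X$, the hypothesis $g=\mathrm{G}\text{-}\mathrm{dim}_AX\geq 1-\mathrm{sup}X$ gives $g>s$ immediately. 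Hence $\mathrm{H}^g(\mathrm{RHom}_A(P,A))=0$, the connecting map $\mathrm{H}^{g-1}(\mathrm{RHom}_A(Y,A))\to\mathrm{H}^{g}(\mathrm{RHom}_A(X,A))$ is surjective, and $\mathrm{G}\text{-}\mathrm{dim}_AY\geq g-1$ follows at once. The ``real obstacle'' you anticipate, and the entire apparatus of derived completion, dualizing DG-modules, $\mathrm{lc.dim}$ and $\mathrm{depth}$ that you propose to invoke, is therefore unnecessary.

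The paper's proof is correspondingly short: normalize to $\mathrm{sup}X=0$ (so $P\in\mathrm{Add}A$ and $\mathrm{H}^i(\mathrm{RHom}_A(P,A))=0$ for $i>0$), set $n=g$, and the long exact sequence gives both $\mathrm{H}^i(\mathrm{RHom}_A(Y,A))=0$ for $i>n-1$ and $\mathrm{H}^{n-1}(\mathrm{RHom}_A(Y,A))\twoheadrightarrow\mathrm{H}^{n}(\mathrm{RHom}_A(X,A))\neq 0$. Note also that the hypothesis $\mathrm{amp}X\geq\mathrm{amp}A$ plays no role in this argument; only $\mathrm{G}\text{-}\mathrm{dim}_AX+\mathrm{sup}X\geq 1$ is used.
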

\begin{proof} By Proposition \ref{lem3.5}, $Y\in\mathcal{R}(A)$. Set $n=\mathrm{G}\textrm{-}\mathrm{dim}_AX+\mathrm{sup}X$ and $g:Y\rightarrow P$ be the canonical morphism. We
may assume that $\mathrm{sup}X=0$ by shifting the degree. Then $n\geq1$.

Firstly, we need to prove $\mathrm{H}^i(\mathrm{RHom}_A(Y,A))=0$ for all
$i>n-1$.
By assumption, we have  $\mathrm{H}^i(\mathrm{RHom}_A(X,A))=0$ for all
$i>n$ and
 $\mathrm{H}^i(\mathrm{RHom}_A(P,A))=0$ for all
$i>0$. Then the following exact sequence
\begin{center} $\mathrm{H}^i(\mathrm{RHom}_A(P,A))\rightarrow\mathrm{H}^i(\mathrm{RHom}_A(Y,A))\rightarrow
\mathrm{H}^{i+1}(\mathrm{RHom}_A(X,A))\rightarrow \mathrm{H}^{i+1}(\mathrm{RHom}_A(P,A))$\end{center} yields that
$\mathrm{H}^i(\mathrm{RHom}_A(Y,A))=0$ for all
$i>n-1$.
Next, since $\mathrm{H}^{n}(\mathrm{RHom}_A(X,A))\neq0$, the exact sequence $\mathrm{H}^{n-1}(\mathrm{RHom}_A(Y,A))\rightarrow\mathrm{H}^{n}(\mathrm{RHom}_A(X,A))\rightarrow 0$ implies that
$\mathrm{H}^{n-1}(\mathrm{RHom}_A(Y,A))\neq0$. Therefore, one has $\mathrm{G}\textrm{-}\mathrm{dim}_AY=n-1$.
\end{proof}

\begin{lem}\label{lem:3.2'}{\it{Let $0\not\simeq X\in\mathcal{G}$ with $\mathrm{amp}X\geq\mathrm{amp}A$. Then there exists a sppj morphism $f:P\rightarrow X$ such that the cocone $Y=\mathrm{cn}(f)[-1]$ is in $\mathcal{G}_0[-\mathrm{sup}X]$.}}
\end{lem}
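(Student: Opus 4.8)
The plan is to take \emph{any} sppj morphism $f : P \to X$ with $P \in \mathcal{P}$ (which exists by \cite[Proposition 2.26]{Mi18} since $A$ is noetherian and $X \in \mathrm{D}^{\mathrm{b}}_{\mathrm{f}}(A)$), form the cocone $Y = \mathrm{cn}(f)[-1]$, and then argue that after possibly enlarging $P$ by a summand (or iterating once) the cocone lands in $\mathcal{G}_0[-\mathrm{sup}X]$. First I would reduce to the case $\mathrm{sup}X = 0$ by shifting, so that $X \in \mathcal{G}_0$ means precisely $\mathrm{G}\textrm{-}\mathrm{dim}_A X = -\mathrm{sup}X = 0$ (using Remark \ref{lem:3.1}(2), which applies since $\mathrm{amp}X \geq \mathrm{amp}A$). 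Then $\mathrm{G}\textrm{-}\mathrm{dim}_A X + \mathrm{sup}X = 0$, so Lemma \ref{lem:3.01} does \emph{not} directly apply — that lemma needs the quantity to be $\geq 1$. This is the crux: we must produce a cocone $Y$ with $\mathrm{G}\textrm{-}\mathrm{dim}_A Y = 0$ and $\mathrm{sup}Y = 0$ as well, i.e. $Y \in \mathcal{G}_0$, not merely $Y \in \mathcal{R}(A)$.

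The key steps I would carry out are as follows. By Proposition \ref{lem3.5}, $Y \in \mathcal{R}(A)$, so it remains to control $\mathrm{sup}Y$ and $\mathrm{G}\textrm{-}\mathrm{dim}_A Y$. From the exact triangle $Y \to P \to X \rightsquigarrow$ and $\mathrm{sup}X = \mathrm{sup}P = 0$ with $\mathrm{H}^0(f)$ surjective, the long exact cohomology sequence gives $\mathrm{H}^i(Y) = 0$ for $i > 0$, so $\mathrm{sup}Y \leq 0$. For the G-dimension bound, apply $\mathrm{RHom}_A(-,A)$ to the triangle: since $\mathrm{G}\textrm{-}\mathrm{dim}_A X = 0$ and $\mathrm{G}\textrm{-}\mathrm{dim}_A P = 0$ (as $P \in \mathcal{P} \subseteq \mathcal{G}_0$ by Remark \ref{lem:3.1}(3)), the triangle $\mathrm{RHom}_A(X,A) \to \mathrm{RHom}_A(P,A) \to \mathrm{RHom}_A(Y,A) \rightsquigarrow$ and its long exact sequence show $\mathrm{H}^i(\mathrm{RHom}_A(Y,A)) = 0$ for $i \geq 1$, hence $\mathrm{G}\textrm{-}\mathrm{dim}_A Y \leq 0$. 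To get equality $\mathrm{G}\textrm{-}\mathrm{dim}_A Y = -\mathrm{sup}Y$ (so that $Y \in \mathcal{G}$), I would invoke Corollary \ref{lem0.3}, which requires $\mathrm{amp}Y \geq \mathrm{amp}A$; here one must check that the cocone inherits enough amplitude — this follows because $\mathrm{H}^i(Y) \cong \mathrm{H}^i(X)$ for $i$ sufficiently negative (the map $\mathrm{H}^i(P) \to \mathrm{H}^i(X)$ being eventually zero or controlled), so $\mathrm{inf}Y = \mathrm{inf}X$ and, combined with $\mathrm{sup}Y = 0 = \mathrm{sup}X$ forced by a suitable choice of $f$, we get $\mathrm{amp}Y = \mathrm{amp}X \geq \mathrm{amp}A$. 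Then $Y \in \mathcal{G}$ with $\mathrm{sup}Y = 0$ gives $Y \in \mathcal{G}_0 = \mathcal{G}_0[-\mathrm{sup}X]$.

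The main obstacle I anticipate is ensuring that $\mathrm{sup}Y = \mathrm{sup}X = 0$ exactly (not strictly smaller), because if $\mathrm{sup}Y < 0$ then $Y \in \mathcal{G}_0[-\mathrm{sup}X]$ fails on the nose. One must choose the sppj morphism $f$ carefully — specifically so that $\mathrm{H}^0(f) : \mathrm{H}^0(P) \to \mathrm{H}^0(X)$ is not merely surjective but an \emph{isomorphism on} $\mathrm{H}^0$, or alternatively argue that $\mathrm{H}^0(Y) \neq 0$ by a minimality/Nakayama argument; indeed, over a local DG-ring one can pick $f$ so that $\mathrm{H}^0(P) \to \mathrm{H}^0(X)$ is a projective cover of $\mathrm{H}^0(X)$ as an $\mathrm{H}^0(A)$-module, forcing $\mathrm{H}^0(Y) = 0$ but then $\mathrm{sup}Y$ drops — so in fact the honest route is the \emph{opposite}: choose $f$ so that $\mathrm{H}^0(f)$ is an isomorphism (possible by adjusting $P$), whence $\mathrm{H}^0(Y)$ need not vanish but rather $\mathrm{H}^{-1}(Y) \twoheadrightarrow \ker$, and one instead shows $\mathrm{H}^{\mathrm{sup}X}$ is preserved. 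I would resolve this by appealing to the structure of sppj resolutions in \cite{Mi18}: since $\mathrm{amp}X \geq \mathrm{amp}A \geq 1$ in the interesting case, $\mathrm{inf}X < \mathrm{sup}X$, so subtracting one "layer" $P$ concentrated in degree $\mathrm{sup}X$ cannot affect $\mathrm{inf}Y$, and the equality $\mathrm{sup}Y = \mathrm{sup}X$ will hold after verifying that the connecting map $\mathrm{H}^{\mathrm{sup}X - 1}(X) \to \mathrm{H}^{\mathrm{sup}X}(Y)$ is nonzero — which it is, because $X \not\simeq P$ forces $Y \not\simeq 0$ and $Y$ cannot be concentrated below degree $\mathrm{sup}X - 1$ without contradicting $\mathrm{amp}Y = \mathrm{amp}X$. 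Once this amplitude bookkeeping is pinned down, the G-dimension statement follows formally from the two long exact sequences and Corollary \ref{lem0.3}.
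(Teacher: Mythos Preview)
Your framework is correct and matches the paper's up to the crucial step: you reduce to $\sup X=0$, take a sppj morphism $f:P\to X$ with $P\in\mathcal P$, observe that $Y\in\mathcal R(A)$ by Proposition~\ref{lem3.5}, and obtain $\sup Y\leq 0$ and $\mathrm G\textrm{-}\dim_AY\leq 0$ from the two long exact sequences. You also correctly isolate the obstacle: to invoke Corollary~\ref{lem0.3} and conclude $Y\in\mathcal G_0$, you need $\mathrm{amp}\,Y\geq\mathrm{amp}\,A$.

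The gap is in your resolution of that obstacle. Your claim that $\inf Y=\inf X$ is false: from the triangle $Y\to P\to X\rightsquigarrow$ the connecting homomorphism goes $\mathrm H^{i-1}(X)\to\mathrm H^i(Y)$, so for $i<\inf A$ one gets $\mathrm H^i(Y)\cong\mathrm H^{i-1}(X)$, which forces $\inf Y\geq\inf X+1$ rather than $\inf Y=\inf X$. Consequently nothing prevents $\mathrm{amp}\,Y<\mathrm{amp}\,A$, and the circular argument in your last paragraph (arguing $\sup Y=\sup X$ via an amplitude equality you have not established) does not repair this.

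The paper handles this with exactly the trick you mention in passing but never execute: if $\mathrm{amp}\,Y<\mathrm{amp}\,A$, replace $f$ by $(f,0):P\oplus P\to X$, which is still sppj, and whose cocone is $Y\oplus P$. Since $P\in\mathcal P$ has finite projective dimension, Lemma~\ref{lem0.2} gives $\mathrm{amp}\,P\geq\mathrm{amp}\,A$, whence $\mathrm{amp}(Y\oplus P)\geq\mathrm{amp}\,A$. Note too that your extended worry about forcing $\sup Y=\sup X$ is a red herring: once the amplitude bound holds, Corollary~\ref{lem0.3} gives $-\sup Y\leq\mathrm G\textrm{-}\dim_AY\leq -\sup X\leq -\sup Y$, which squeezes $\sup Y=\sup X$ and $\mathrm G\textrm{-}\dim_AY=-\sup X$ simultaneously; no separate argument about $\mathrm H^0(f)$ being or not being an isomorphism is needed.
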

\begin{proof} Let $f:P\rightarrow X$ be a sppj morphism with $P\in\mathcal{P}[-\mathrm{sup}X]$. One has an exact triangle $Y\rightarrow P\rightarrow X\rightsquigarrow$. By Remark \ref{lem:3.1}(5), we may assume that $\mathrm{amp}Y\geq\mathrm{amp}A$. Then $\mathrm{G}\textrm{-}\mathrm{dim}_AP=-\mathrm{sup}X$ by \cite[Lemma 2.14]{Mi18} and \cite[Proposition 4.4(3)]{ya20}, so $\mathrm{sup}\mathrm{RHom}_A(Y,A)\leq-\mathrm{sup}X\leq-\mathrm{sup}Y$ since $\mathrm{sup}Y\leq \mathrm{sup}X$.
Hence
Remark \ref{lem:3.1}(2) implies that $Y\in\mathcal{G}_0[-\mathrm{sup}X]$.
\end{proof}

\begin{lem}\label{lem:6.00} {\it{Let $0\not\simeq X\in\mathcal{G}$ with $\mathrm{amp}X\geq\mathrm{amp}A$. Then there exists a morphism $f:X\rightarrow P$ such that $Y=\mathrm{cn}(f)\in\mathcal{G}_0[-\mathrm{sup}X]$.}}
\end{lem}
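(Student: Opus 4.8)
The plan is to dualise Lemma~\ref{lem:3.2'} along the functor $\mathrm{RHom}_A(-,A)$. Put $X^{\vee}:=\mathrm{RHom}_A(X,A)$. The first thing I would check is that $X^{\vee}\in\mathcal{G}$ and $\mathrm{sup}X^{\vee}=-\mathrm{sup}X$: since $X\in\mathcal{G}\subseteq\mathcal{R}(A)$, the DG-module $X^{\vee}$ is again reflexive with $X^{\vee\vee}\simeq X$, and $\mathrm{sup}X^{\vee}=\mathrm{sup}\mathrm{RHom}_A(X,A)=\mathrm{G}\textrm{-}\mathrm{dim}_AX=-\mathrm{sup}X$ because $X\in\mathcal{G}$; hence $\mathrm{G}\textrm{-}\mathrm{dim}_AX^{\vee}=\mathrm{sup}X^{\vee\vee}=\mathrm{sup}X=-\mathrm{sup}X^{\vee}$, i.e. $X^{\vee}\in\mathcal{G}$. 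Note $X^{\vee}\neq0$ since $X^{\vee\vee}\simeq X\neq0$.

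Next I would resolve $X^{\vee}$ one step, repeating the proof of Lemma~\ref{lem:3.2'} for $X^{\vee}$ (rather than citing it; see the last paragraph). Choose a sppj morphism $g:Q\to X^{\vee}$ with $Q\in\mathcal{P}[-\mathrm{sup}X^{\vee}]=\mathcal{P}[\mathrm{sup}X]$, available by \cite[Proposition 2.26]{Mi18}, and set $Z=\mathrm{cn}(g)[-1]$, so there is an exact triangle $Z\to Q\stackrel{g}{\to}X^{\vee}\rightsquigarrow$ with $Z\in\mathcal{R}(A)$ by Proposition~\ref{lem3.5}. Remark~\ref{lem:3.1}(1), (3) and (4) applied to the rotated triangle $X^{\vee}[-1]\to Z\to Q\rightsquigarrow$ (together with $X^{\vee}\in\mathcal{G}$) give $\mathrm{G}\textrm{-}\mathrm{dim}_AZ\leq-\mathrm{sup}X^{\vee}$, while the long exact cohomology sequence and surjectivity of $\mathrm{H}^{\mathrm{sup}X^{\vee}}(g)$ give $\mathrm{sup}Z\leq\mathrm{sup}X^{\vee}$. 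If $\mathrm{amp}Z\geq\mathrm{amp}A$, Corollary~\ref{lem0.3} forces $\mathrm{G}\textrm{-}\mathrm{dim}_AZ\geq-\mathrm{sup}Z\geq-\mathrm{sup}X^{\vee}$, whence $\mathrm{G}\textrm{-}\mathrm{dim}_AZ=-\mathrm{sup}Z=-\mathrm{sup}X^{\vee}$ and $Z\in\mathcal{G}_0[\mathrm{sup}X]$; if $\mathrm{amp}Z<\mathrm{amp}A$, replace $g$ by the sppj morphism $(g,0):Q\oplus Q\to X^{\vee}$, whose cocone is $Z\oplus Q$ with $\mathrm{amp}(Z\oplus Q)\geq\mathrm{amp}Q\geq\mathrm{amp}A$ by Lemma~\ref{lem0.2} (as $Q\neq0$ has finite projective dimension), and apply the previous case. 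Either way one gets an exact triangle $Z\to Q\stackrel{g}{\to}X^{\vee}\rightsquigarrow$ with $Q\in\mathcal{P}[\mathrm{sup}X]$ and $Z\in\mathcal{G}_0[\mathrm{sup}X]$; in particular $\mathrm{G}\textrm{-}\mathrm{dim}_AZ=\mathrm{sup}X$ and $\mathrm{sup}Z=-\mathrm{sup}X$.

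Finally I would dualise this triangle back. Its three terms are reflexive, so applying $\mathrm{RHom}_A(-,A)$ and the biduality isomorphisms produces an exact triangle $X\stackrel{f}{\to}P\to Y\rightsquigarrow$, where $f$ is $\mathrm{RHom}_A(g,A)$ composed with $X\simeq X^{\vee\vee}$, $P=Q^{\vee}\in\mathcal{P}[-\mathrm{sup}X]$, and $Y=Z^{\vee}=\mathrm{cn}(f)$. Then $\mathrm{sup}Y=\mathrm{sup}\mathrm{RHom}_A(Z,A)=\mathrm{G}\textrm{-}\mathrm{dim}_AZ=\mathrm{sup}X$ and $\mathrm{G}\textrm{-}\mathrm{dim}_AY=\mathrm{sup}Z^{\vee\vee}=\mathrm{sup}Z=-\mathrm{sup}X=-\mathrm{sup}Y$, so $Y\in\mathcal{G}_0[-\mathrm{sup}X]$, as required.

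The one delicate point is that one cannot simply invoke Lemma~\ref{lem:3.2'} for $X^{\vee}$: the identity $\mathrm{sup}X^{\vee}=-\mathrm{sup}X$ is exactly what makes the shift bookkeeping go through, but there is no reason for $X^{\vee}$ to satisfy $\mathrm{amp}X^{\vee}\geq\mathrm{amp}A$, which is a standing hypothesis of that lemma. Re-running its proof — specifically, the case split on $\mathrm{amp}Z$ and the $(g,0):Q\oplus Q\to X^{\vee}$ device — is precisely what dispenses with any amplitude assumption on $X^{\vee}$. Everything else is the routine transfer of $\mathrm{sup}$, $\mathrm{inf}$ and $\mathrm{G}$-$\mathrm{dim}$ across the duality $\mathrm{RHom}_A(-,A)$.
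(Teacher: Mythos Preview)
Your proposal is correct and follows essentially the same strategy as the paper: dualise $X$, take an sppj morphism $g:Q\to X^{\vee}$ whose cocone $Z$ has $\mathrm{amp}Z\geq\mathrm{amp}A$ (using the $(g,0)$ device if necessary), and dualise back to obtain $f:X\to P$ with cone $Y\simeq Z^{\vee}$. The only organisational difference is that you first verify $Z\in\mathcal{G}_0[\mathrm{sup}X]$ and then read off $Y\in\mathcal{G}_0[-\mathrm{sup}X]$ by duality, whereas the paper dualises first and argues directly about $Y$; your caution in re-running the proof of Lemma~\ref{lem:3.2'} rather than citing it (since $\mathrm{amp}X^{\vee}\geq\mathrm{amp}A$ need not hold) is well placed --- the paper's citation really only extracts the part of that proof which is independent of the amplitude hypothesis.
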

\begin{proof} Since $\mathrm{sup}\mathrm{RHom}_A(X,A)=-\mathrm{sup}X$ and $\mathrm{inf}\mathrm{RHom}_A(X,A)\geq\mathrm{inf}A-\mathrm{sup}X$, it follows that $\mathrm{amp}\mathrm{RHom}_A(X,A)\leq\mathrm{amp}A$. Set $n=\mathrm{amp}A-\mathrm{amp}\mathrm{RHom}_A(X,A)$ and $X'=\mathrm{RHom}_A(X,A)\oplus\mathrm{RHom}_A(X,A)[n]$. Then $\mathrm{amp}X'=\mathrm{amp}A$ and $\mathrm{sup}X'=\mathrm{sup}\mathrm{RHom}_A(X,A)$. By Lemma \ref{lem:3.2'}, there exists a sppj morphism $g:P\rightarrow X'$ such that $P\in\mathcal{P}[-\mathrm{sup}X']$, so we have a morphism \begin{center}$g^\ast:X\oplus X[-n]\simeq\mathrm{RHom}_{A}(X',A)\rightarrow\mathrm{RHom}_{A}(P,A)$\end{center} and an exact triangle $X\stackrel{f}\rightarrow \mathrm{RHom}_{A}(P,A)\rightarrow Y\rightsquigarrow$, where $f$ is the composition of $X\rightarrow X\oplus X[-n]$ and $g^\ast$.
Since $\mathrm{RHom}_{A}(P,A)\in\mathcal{P}[-\mathrm{sup}X]$, $Y\in\mathcal{R}(A)$.  By Remark \ref{lem:3.1}(5), we may assume that $\mathrm{amp}Y\geq\mathrm{amp}A$.
We need to show that $Y\in\mathcal{G}_0[-\mathrm{sup}X]$. Note that $\mathrm{H}^{-\mathrm{sup}X}(\mathrm{RHom}(f,A))$ is surjective, $\mathrm{G}\textrm{-}\mathrm{dim}_AY=\mathrm{sup}\mathrm{RHom}_{A}(Y,A)\leq\mathrm{sup}\mathrm{RHom}_{A}(X,A)=
\mathrm{G}\textrm{-}\mathrm{dim}_AX
=-\mathrm{sup}X$. If $\mathrm{G}\textrm{-}\mathrm{dim}_A\mathrm{Y}+\mathrm{sup}Y\geq1$, then $\mathrm{sup}X\leq\mathrm{sup}\mathrm{Y}-1$, and so $\mathrm{H}^{\mathrm{sup}\mathrm{Y}}(Y)=0$, this is a contradiction. Therefore, $Y\in\mathcal{G}_0[-\mathrm{sup}X]$.
\end{proof}

With the above preparations, now we prove Theorem \ref{thm:main}.

{\bf Proof of Theorem \ref{thm:main}.} (1) Let $X$ be a DG-module in $\mathrm{D}^{\mathrm{b}}_{\mathrm{f}}(A)$ with $\mathrm{amp}X\geq\mathrm{amp}A$ and $n$ a natural number. Then $(ii) \Longrightarrow (iii)$ follows from Lemma \ref{lem:3.2'} as $e-\mathrm{sup}X_e\leq n-\mathrm{sup}X$. $(iii) \Longrightarrow(i)$ and $(iv) \Longrightarrow(i)$ hold by the definition.

To show $(i) \Longrightarrow (ii)$, we assume that $P_{\bullet}$ is a sppj resolution of $X$ and $X_{i+1}\stackrel{g_{i+1}}\rightarrow P_i\stackrel{f_i}\rightarrow X_i\rightsquigarrow$ are the corresponding exact triangles
such that $f_i$ is a sppj morphism and $\mathrm{amp}X_i\geq\mathrm{amp}A$ for $i\geq 0$ with $X_0:=X$. Fix $i\geq 1$. If  $\mathrm{G}\textrm{-}\mathrm{dim}_AX_{j-1}+\mathrm{sup}X_{j-1}>0$ for $1\leq j\leq i$, then $\mathrm{G}\textrm{-}\mathrm{dim}_AX_{j}=\mathrm{G}\textrm{-}\mathrm{dim}_AX_{j-1}-1$ for $1\leq j\leq i$ by Lemma \ref{lem:3.01}, which implies
\begin{center}$\mathrm{G}\textrm{-}\mathrm{dim}_AX_{i}+i=\mathrm{G}\textrm{-}\mathrm{dim}_AX$.\end{center} Since $\mathrm{sup}X_i\leq \mathrm{sup}X$, it follows that the set
$\{i\geq 1\mid\mathrm{G}\textrm{-}\mathrm{dim}_AX_{i-1}+\mathrm{sup}X_{i-1}>0\}$ is finite. Set $e:=\mathrm{max}\{i\geq 1\mid\mathrm{G}\textrm{-}\mathrm{dim}_AX_{i-1}+\mathrm{sup}X_{i-1}>0\}$. Then $\mathrm{G}\textrm{-}\mathrm{dim}_AX_{e}+e=e-\mathrm{sup}X_e=\mathrm{G}\textrm{-}\mathrm{dim}_AX\leq n-\mathrm{sup}X$
and $X_e\in\mathcal{G}_0[-\mathrm{sup}X_e]$.

$(i) \Longrightarrow (iv)$ If $\mathrm{G}\textrm{-}\mathrm{dim}_AX=-\mathrm{sup}X$, then $X\in\mathcal{G}_0[-\mathrm{sup}X]$ by Remark \ref{lem:3.1}(2). Assume that $\mathrm{G}\textrm{-}\mathrm{dim}_AX>-\mathrm{sup}X$. By analogy with the proof of Lemma \ref{lem:6.00},
one can obtain an exact triangle $Y\rightarrow X\rightarrow P^\ast\rightsquigarrow$, which induces the exact triangle
\begin{equation}\label{equ:3.2}
\mathrm{RHom}_{A}(Y,A)[-1]\rightarrow P\stackrel{f}\rightarrow\mathrm{RHom}_{A}(X,A)\rightarrow\mathrm{RHom}_{A}(Y,A),
\end{equation}where $P^\ast=\mathrm{RHom}_{A}(P,A)\in\mathcal{P}[\mathrm{sup}\mathrm{RHom}_{A}(X,A)]$. By Remark \ref{lem:3.1}(5), we may assume that $\mathrm{amp}Y\geq\mathrm{amp}A$.
Since $\mathrm{sup}P^\ast=-\mathrm{sup}\mathrm{RHom}_{A}(X,A)=\mathrm{sup}X-n<\mathrm{sup}X$, one has $\mathrm{sup}Y=\mathrm{sup}X$. As $\mathrm{H}^{\mathrm{sup}\mathrm{RHom}_{A}(X,A)}(f)$ is surjective, it follows from the triangle (\ref{equ:3.2}) that \begin{center}$\mathrm{G}\textrm{-}\mathrm{dim}_AY=\mathrm{sup}\mathrm{RHom}_{A}(Y,A)
\leq\mathrm{sup}\mathrm{RHom}_{A}(X,A)-1=\mathrm{G}\textrm{-}\mathrm{dim}_AX-1=n-1-\mathrm{sup}Y$.\end{center}
Now by induction, one can obtain $X\in\mathcal{G}_0[-\mathrm{sup}X]\ast\mathcal{P}[-\mathrm{sup}X+1]\ast\cdots\ast\mathcal{P}[-\mathrm{sup}X+n]$.

(2) Set $\bar{X}=X\oplus X[\mathrm{amp}A-\mathrm{amp}X]$. Then $\bar{X}\in\mathrm{D}^{\mathrm{b}}_{\mathrm{f}}(A)$ with $\mathrm{amp}\bar{X}=\mathrm{amp}A$ and $\mathrm{sup}\bar{X}=\mathrm{sup}X$. As $\textrm{G-dim}_AX+(\mathrm{amp}A-\mathrm{amp}X)=\textrm{G-dim}_A\bar{X}$, the statement follows from (1).

(3) Evidently, $(iii)$ is stronger than $(ii)$, so it is sufficient to prove that $(i)$ implies $(iii)$ and $(ii)$ implies $(i)$.

$(i)\Rightarrow(iii)$ As $A$ is local Gorenstein, $\mathrm{injdim}_AA<\infty$ and $\mathrm{sup}\mathrm{RHom}_{A}(X,A)\leq\mathrm{injdim}_AA-\mathrm{inf}X$, it follows that $\mathrm{RHom}_{A}(X,A)\in\mathrm{D}^{\mathrm{b}}_{\mathrm{f}}(A)$ and $X\in\mathcal{R}(A)$ for any $X\in\mathrm{D}^{\mathrm{b}}_{\mathrm{f}}(A)$. Consequently, for
any $X\in\mathrm{D}^{\mathrm{b}}_{\mathrm{f}}(A)$, we have $\textrm{G-dim}_AX+\mathrm{inf}X\leq\mathrm{injdim}_AA$, as desired.

$(ii)\Rightarrow(i)$
Assume that G-dim$_A\bar{k} < \infty$. Then $\mathrm{sup}\mathrm{RHom}_{A}(\bar{k},A)<\infty$. Hence \cite[Proposition 4.2 and Theorem 4.7]{ya20} implies that
$\mathrm{injdim}_AA<\infty$ and $A$ is local Gorenstein.
\hfill$\Box$

\section{Applications}
In this section, we will apply Theorem \ref{thm:main} to establish the connection between G-dimensions and the little finitistic dimensions of $A$, to examine the DG version of the Fact \ref{fact:1.1}, and to establish a DG-version of Buchwtweiz-Happel Theorem and
its inverse.
\subsection{Relations between G-dimensions and the little finitistic dimensions}
In this subsection we will apply Theorem \ref{thm:main} to show that the little finitistic dimension $\mathrm{fpd}(A)$ can be computed by G-dimension of DG-modules in $\mathrm{D}^{\mathrm{b}}_{\mathrm{f}}(A)$.

Yekutieli \cite{ye16} introduced a DG-version of projective dimension, $\mathrm{pd}M$, for a DG-module $M$
over a DG-algebra which are different from the definition ${\rm projdim}_AM$ in Definition \ref{df:2.1}. By \cite[Theorems 2.22 and 3.21]{Mi18}, one has
${\rm projdim}_AM=\mathrm{pd}M-\mathrm{sup}M$.

\begin{prop}\label{lem:3.2}{\it{Let $0\not\simeq X\in\mathrm{D}^{\mathrm{b}}_{\mathrm{f}}(A)$. One has \begin{center}$\mathrm{G}\textrm{-}\mathrm{dim}_AX\leq\mathrm{projdim}_AX$,\end{center}and equality holds if $\mathrm{projdim}_AX<\infty$.}}
\end{prop}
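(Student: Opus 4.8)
The plan is to establish the inequality $\mathrm{G}\textrm{-}\mathrm{dim}_AX\leq\mathrm{projdim}_AX$ first, which is trivial when $\mathrm{projdim}_AX=\infty$, so we may assume $\mathrm{projdim}_AX<\infty$; under this hypothesis we will in fact prove equality directly. The key reduction is that finite projective dimension forces $X\in\mathcal{R}(A)$: indeed, if $\mathrm{projdim}_AX<\infty$, then by Definition \ref{df:2.1}(1) we have $\mathrm{Ext}^i_A(X,A)=0$ for $i>\mathrm{projdim}_AX$ (taking $N=A$, so $\mathrm{sup}N=0$), hence $\mathrm{RHom}_A(X,A)\in\mathrm{D}^{\mathrm{b}}_{\mathrm{f}}(A)$. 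Moreover, a DG-module of finite projective dimension admits (via Minamoto's theory, cf.\ \cite[Proposition 2.26]{Mi18} and the translation ${\rm projdim}_AM=\mathrm{pd}M-\mathrm{sup}M$ recorded just above) a finite sppj resolution by objects of $\mathcal{P}$; applying $\mathrm{RHom}_A(-,A)$ to such a resolution and using that each $P_i\in\mathcal{P}$ satisfies $\mathrm{RHom}_A(P_i,A)\in\mathcal{P}$ with the canonical biduality map an isomorphism, one gets by downward induction along the triangles, together with Proposition \ref{lem3.5}, that $X$ itself is reflexive. Thus $\mathrm{G}\textrm{-}\mathrm{dim}_AX<\infty$ whenever $\mathrm{projdim}_AX<\infty$, which already gives the inequality in all cases.

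For the equality, I would compute both sides via a normalized dualizing DG-module, mirroring the technique of Lemma \ref{lem0.2} and Corollary \ref{lem0.3}. As in those proofs, after base change to $B=\mathrm{L}\Lambda_{\bar{\mathfrak{m}}}(A)$ (which preserves both $\mathrm{projdim}$ by \cite[Proposition 3.7]{BSSW} and $\mathrm{G}\textrm{-}\mathrm{dim}$, and preserves $\mathrm{amp}$) we may assume $A$ is derived $\bar{\mathfrak{m}}$-adically complete and so possesses a normalized dualizing DG-module $R$. Then, exactly as in the displayed chain in the proof of Lemma \ref{lem0.2}, one has $\mathrm{RHom}_A(X,R)\simeq\mathrm{RHom}_A(X,A)\otimes^{\mathrm{L}}_AR$ by \cite[Proposition 1.16]{BSSW}, and the local-cohomology Krull dimension computation via \cite[Proposition 4.4(3)]{ya20} gives
$$\mathrm{lc.dim}_A\mathrm{RHom}_A(X,R)=\mathrm{lc.dim}_AR+\mathrm{sup}\mathrm{RHom}_A(X,A).$$
By the biduality $X\simeq\mathrm{RHom}_A(\mathrm{RHom}_A(X,R),R)$ and Lemma \ref{lem0.1}, the left-hand side equals $-\mathrm{inf}X+\text{(terms involving }R\text{)}$, wait --- more precisely, Lemma \ref{lem0.1} gives $\mathrm{lc.dim}_A\mathrm{RHom}_A(Z,R)=-\mathrm{inf}Z$ applied to $Z=\mathrm{RHom}_A(X,R)$ together with $\mathrm{inf}\mathrm{RHom}_A(X,R)=-\mathrm{lc.dim}_AX$. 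Combining these with the identity $\mathrm{lc.dim}_AR=\mathrm{amp}\mathrm{R}\Gamma_{\bar{\mathfrak{m}}}(R)+\cdots$ and the formula $\mathrm{projdim}_AX=\mathrm{sup}\mathrm{RHom}_A(X,A)$ (valid under finite projective dimension, by \cite[Proposition 4.4(3)]{ya20} as used in Lemma \ref{lem0.2}), one reads off $\mathrm{sup}\mathrm{RHom}_A(X,A)=\mathrm{projdim}_AX$ on the one hand and $\mathrm{sup}\mathrm{RHom}_A(X,A)=\mathrm{G}\textrm{-}\mathrm{dim}_AX$ by Definition \ref{def:1.2}(2) on the other; hence the two are equal.

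The main obstacle I anticipate is the bookkeeping of the auxiliary invariants ($\mathrm{inf}$, $\mathrm{sup}$, $\mathrm{depth}$, $\mathrm{lc.dim}$) through the dualizing-module manipulations so as to extract precisely the equality $\mathrm{projdim}_AX=\mathrm{sup}\mathrm{RHom}_A(X,A)$ rather than merely an inequality --- this is where \cite[Proposition 4.4(3)]{ya20} (the Auslander--Buchsbaum-type formula for projective dimension via $\mathrm{RHom}_A(X,A)$) does the real work, and the point to be careful about is that $\mathrm{sup}\mathrm{RHom}_A(X,A)$ literally \emph{is} the definition of $\mathrm{G}\textrm{-}\mathrm{dim}_AX$, so once reflexivity is in hand the equality is almost tautological, with the inequality $\mathrm{G}\textrm{-}\mathrm{dim}_AX\leq\mathrm{projdim}_AX$ in the infinite case being the only separately-stated content. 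An alternative, perhaps cleaner route to the inequality avoiding dualizing modules: since $\mathrm{RHom}_A(X,A)$ is computed from a finite $\mathcal{P}$-resolution of $X$ of length $\mathrm{projdim}_AX+\mathrm{sup}X$, its top cohomology vanishes above degree $\mathrm{projdim}_AX$, giving $\mathrm{G}\textrm{-}\mathrm{dim}_AX=\mathrm{sup}\mathrm{RHom}_A(X,A)\leq\mathrm{projdim}_AX$ immediately; I would present this short argument for the inequality and reserve the dualizing-module computation only for the reverse inequality in the finite case.
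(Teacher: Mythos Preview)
Your proposal is correct, though its presentation meanders; the ``alternative, cleaner route'' you sketch at the end together with the citation to \cite[Proposition 4.4(3)]{ya20} is already the complete argument and should replace the confused dualizing-module passage in the middle.

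Your approach differs from the paper's in both halves. For the inequality, the paper invokes Theorem~\ref{thm:main}(1): finite projective dimension places $X$ in $\mathcal{P}[-\mathrm{sup}X]\ast\cdots\ast\mathcal{P}[-\mathrm{sup}X+n]$ via \cite[Proposition 2.26]{Mi18}, and Lemma~\ref{lem0.2} supplies the hypothesis $\mathrm{amp}X\geq\mathrm{amp}A$ needed to apply the main theorem. You instead argue directly that a finite $\mathcal{P}$-resolution plus Proposition~\ref{lem3.5} forces reflexivity, and then bound $\sup\mathrm{RHom}_A(X,A)$ from the resolution; this is more self-contained and avoids the main theorem. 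For the equality, the paper picks (via \cite[Theorem 2.22]{Mi18}) a finitely generated $\mathrm{H}^0(A)$-module $T$ realizing $\mathrm{projdim}_AX=\sup\mathrm{RHom}_A(X,T)$, then uses $\mathrm{RHom}_A(X,T)\simeq\mathrm{RHom}_A(X,A)\otimes^{\mathrm{L}}_AT$ and Nakayama to obtain $\sup\mathrm{RHom}_A(X,T)=\sup\mathrm{RHom}_A(X,A)=\mathrm{G}\textrm{-}\mathrm{dim}_AX$. You reach the same identity $\mathrm{projdim}_AX=\sup\mathrm{RHom}_A(X,A)$ by citing \cite[Proposition 4.4(3)]{ya20} directly. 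Both routes are valid; the paper's has the minor advantage of exercising Theorem~\ref{thm:main}, while yours is shorter and independent of it.
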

\begin{proof} The inequality is trivial if $X$ is of infinite projective dimension. Next, assume that  $n=\mathrm{projdim}_AX+\mathrm{sup}X<\infty$. It follows from \cite[Proposition 2.26]{Mi18} that $X\in\mathcal{P}[-\mathrm{sup}X]\ast\mathcal{P}[-\mathrm{sup}X+1]\ast\cdots\ast\mathcal{P}[-\mathrm{sup}X+n]$. Hence $\mathrm{G}\textrm{-}\mathrm{dim}_AX\leq n-\mathrm{sup}X$ by Theorem \ref{thm:main}(1) and Lemma \ref{lem0.2}. This yields that $\mathrm{G}\textrm{-}\mathrm{dim}_AX\leq\mathrm{projdim}_AX$.
For the last part, one has
\begin{center}$\mathrm{projdim}_AX=\mathrm{sup}\mathrm{RHom}_{A}(X,A)
=\mathrm{G}\textrm{-}\mathrm{dim}_AX$\end{center}by \cite[Proposition 2.2]{Mi19} and  \cite[Proposition 4.4(3)]{ya20}, as desired.
\end{proof}

We employ an example in \cite{sh21} to illustrate Proposition \ref{lem:3.2} that a DG-module with finite G-dimension may have infinite projective dimension.

\begin{ex}\label{exm:3.1} {\rm Let $(B,\mathfrak{m},k)$ be a commutative noetherian local ring and $D$ a dualizing complex over $B$ with $\mathrm{sup}D<0$. Then the trivial extension DG-ring $A:=B\ltimes D$ is a local Gorenstein DG-ring with $0<\mathrm{amp}A<\infty$ by \cite[Example 7.2]{sh21}, and hence G-$\mathrm{dim}_Ak\leq\mathrm{injdim}_AA<\infty$, but $\mathrm{projdim}_Ak=\infty$ by Lemma \ref{lem0.2}}.
\end{ex}


The next lemma is an immediate consequence of equivalence of $(i)$ and $(iv)$ in Theorem \ref{thm:main}(1).

\begin{lem}\label{lem6.1} Let $X$ be in $\mathrm{D}^{\mathrm{b}}_{\mathrm{f}}(A)$ with $\mathrm{amp}X\geq\mathrm{amp}A$. If
$\mathrm{G}\textrm{-}\mathrm{dim}_AX=n$, then there exists an exact triangle \begin{center}$G\rightarrow X
\rightarrow K\rightsquigarrow$,\end{center} such that
$G\in\mathcal{G}_0[-\mathrm{sup}X]$ and $\mathrm{projdim}_AK=n-1$. For $n=-\mathrm{sup}X$,
we understand that $K=0$.
\end{lem}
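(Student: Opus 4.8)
The plan is to peel off projective summands from $X$ one step at a time using sppj morphisms, keeping track of the G-dimension via Lemma \ref{lem:3.01}, until the remaining piece lands in $\mathcal{G}_0[-\mathrm{sup}X]$. More precisely, I would set $m = \mathrm{G}\textrm{-}\mathrm{dim}_AX + \mathrm{sup}X$, which by Corollary \ref{lem0.3} satisfies $m\geq 0$, and I would argue by induction on $m$. The base case $m=0$ is exactly Remark \ref{lem:3.1}(2): $X\in\mathcal{G}_0[-\mathrm{sup}X]$, so one takes $G=X$ and $K=0$.

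For the inductive step $m\geq1$, I would first replace $X$ by $X\oplus P'$ for a suitable $P'\in\mathcal{P}[-\mathrm{sup}X]$ if necessary (using Lemma \ref{lem0.2} and Remark \ref{lem:3.1}(1) to ensure the amplitude and G-dimension behave), so that a sppj morphism $f:P\rightarrow X$ with $P\in\mathcal{P}[-\mathrm{sup}X]$ has cocone $Y=\mathrm{cn}(f)[-1]$ again satisfying $\mathrm{amp}Y\geq\mathrm{amp}A$ — this is precisely the trick used in the proof of Lemma \ref{lem:3.2'}. By Proposition \ref{lem3.5}, $Y\in\mathcal{R}(A)$, and since $\mathrm{G}\textrm{-}\mathrm{dim}_AX+\mathrm{sup}X = m\geq1$, Lemma \ref{lem:3.01} gives $\mathrm{G}\textrm{-}\mathrm{dim}_AY = \mathrm{G}\textrm{-}\mathrm{dim}_AX - 1$, and $\mathrm{sup}Y\leq\mathrm{sup}X$; in fact since the sppj morphism hits $\mathrm{H}^{\mathrm{sup}X}$ surjectively one checks $\mathrm{sup}Y=\mathrm{sup}X$ when $m\geq 2$, and one handles the degenerate amplitude case as in Lemma \ref{lem:3.2'}. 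Applying the inductive hypothesis to $Y$ yields an exact triangle $G\rightarrow Y\rightarrow K'\rightsquigarrow$ with $G\in\mathcal{G}_0[-\mathrm{sup}X]$ and $\mathrm{projdim}_AK'=(m-1)-1$ (or $K'=0$ if $m=1$). I would then assemble the octahedral/$3\times3$ diagram from the triangles $Y\rightarrow P\rightarrow X\rightsquigarrow$ and $G\rightarrow Y\rightarrow K'\rightsquigarrow$ to produce a triangle $G\rightarrow X\rightarrow K\rightsquigarrow$ where $K$ fits into $K'\rightarrow K\rightarrow P[1]\rightsquigarrow$; since $P\in\mathcal{P}[-\mathrm{sup}X]$ has projective dimension $-\mathrm{sup}X$ and $\mathrm{projdim}_AK' = m-2$, the triangle forces $\mathrm{projdim}_AK = m-1$, using the standard bound $\mathrm{projdim}$ of the middle term of a triangle is at most the max of the outer two plus appropriate shifts (and here the top degree is controlled by $P[1]$).

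The main obstacle I anticipate is the bookkeeping of shifts and the verification that $\mathrm{projdim}_AK$ comes out to be exactly $n-1$ (equivalently $m-1-\mathrm{sup}X$) rather than merely $\leq n-1$: the upper bound is soft from the triangle, but the lower bound requires knowing that the contribution of $P[1]$ is not cancelled in cohomology, which should follow from the surjectivity built into the sppj morphism together with Nakayama's lemma applied after base change along $\mathrm{L}\Lambda_{\bar{\mathfrak{m}}}$, exactly as in the proof of Proposition \ref{lem:3.2}. A secondary subtlety is ensuring that every intermediate DG-module retains amplitude $\geq\mathrm{amp}A$ so that Lemma \ref{lem:3.01} and Corollary \ref{lem0.3} remain applicable; this is the same $X\rightsquigarrow X\oplus P$ device and costs nothing since adding a free summand changes neither $\mathrm{G}\textrm{-}\mathrm{dim}$ nor $\mathrm{sup}$, and only enlarges amplitude. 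Once these two points are pinned down, the induction closes and the statement follows, with the convention $K=0$ when $m=0$ handled by the base case.
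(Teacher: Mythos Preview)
Your inductive plan has a genuine gap at the octahedral step. From the sppj triangle $Y\to P\to X\rightsquigarrow$ and the inductive triangle $G\to Y\to K'\rightsquigarrow$ there is no natural nonzero morphism $G\to X$: the only candidate is the composite $G\to Y\to P\to X$, but $Y\to P$ and $P\to X$ are consecutive maps in a distinguished triangle and therefore compose to zero. If instead you feed the composable pair $G\to Y\to P$ into the octahedral axiom you obtain triangles $G\to P\to W\rightsquigarrow$ and $K'\to W\to X\rightsquigarrow$, neither of which is of the required form $G\to X\to K\rightsquigarrow$. In effect, a sppj resolution of $X$ produces the decomposition of type (iii) in Theorem~\ref{thm:main}(1), namely $X\in\mathcal{P}[-\sup X]\ast\cdots\ast\mathcal{G}_0[-\sup X+m]$, whereas the lemma asks for the decomposition of type (iv), namely $X\in\mathcal{G}_0[-\sup X]\ast\mathcal{P}[-\sup X+1]\ast\cdots$, and one cannot pass from the former to the latter by a direct diagram chase.

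The paper deduces the lemma as an immediate consequence of Theorem~\ref{thm:main}(1)(iv), whose proof works on the \emph{dual}: one takes a sppj morphism $P\to\mathrm{RHom}_A(X,A)$ and applies $\mathrm{RHom}_A(-,A)$ to obtain a triangle $Y\to X\to P^\ast\rightsquigarrow$ with $P^\ast\in\mathcal{P}[n]$ and $\mathrm{G}\textrm{-}\mathrm{dim}_AY\leq n-1$. Now there \emph{is} a map $Y\to X$, so after applying the inductive hypothesis to $Y$ one can legitimately run the octahedral axiom on the composite $G\to Y\to X$ and obtain $G\to X\to K\rightsquigarrow$ together with $K'\to K\to P^\ast\rightsquigarrow$. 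The idea you are missing is exactly this passage to the dual side, which reverses the direction of the approximating maps so that they point \emph{into} $X$ rather than out of it.
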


The following result conveys that G-dimensions and the little finitistic dimensions are closely related to each other, which contains Corollary \ref{corollary:1.1} in the introduction.

\begin{thm}\label{lem6.2} Let $X$ be an object in $\mathrm{D}^{\mathrm{b}}_{\mathrm{f}}(A)$  with $\mathrm{amp}X\geq\mathrm{amp}A$. If $\mathrm{G}\textrm{-}\mathrm{dim}_AX=n<\infty$, then there is a DG-module $Y\in\mathrm{D}^{\mathrm{b}}_{\mathrm{f}}(A)$ with $\mathrm{projdim}_AY=n$. Moreover, one has

\vspace{2mm}

\hspace{10mm}$\mathrm{fpd}A=\mathrm{sup}\{\mathrm{G}\textrm{-}\mathrm{dim}_AX+\mathrm{inf}X\mid
X\in\mathrm{D}^{\mathrm{b}}_{\mathrm{f}}(A)\ {\rm with}\ \mathrm{amp}X\geq\mathrm{amp}A,\ \mathrm{G}\textrm{-}\mathrm{dim}_AX<\infty\}$

\vspace{2mm}
\hspace{19.5mm}$=\mathrm{sup}\{\mathrm{G}\textrm{-}\mathrm{dim}_AX+\mathrm{inf}X\mid
X\in\mathrm{D}^{\mathrm{b}}_{\mathrm{f}}(A)\ {\rm with}\ \mathrm{G}\textrm{-}\mathrm{dim}_AX<\infty\}$.
\end{thm}
\begin{proof} We may assume that $n>-\mathrm{sup}X$. By Lemma \ref{lem6.1}, there exists an exact triangle $G
\rightarrow X\rightarrow K\rightsquigarrow$ with $\mathrm{projdim}_AK=n-1$ and $G\in\mathcal{G}_0[-\mathrm{sup}X]$, and there is an exact triangle $G\rightarrow P\rightarrow G'\rightsquigarrow$ with $P\in\mathcal{P}[-\mathrm{sup}G]$ and $G'\in\mathcal{G}_0[-\mathrm{sup}G]$ by Lemma \ref{lem:6.00}. Thus we get a commutative diagram of exact triangles in $\mathrm{D}^{\mathrm{b}}_{\mathrm{f}}(A)$:
\begin{center}$\xymatrix@C=20pt@R=18pt{
  & G'[-1] \ar[d]\ar@{=}[r]&G'[-1]\ar[d]&\\
   K[-1]\ar@{=}[d]\ar[r] & G\ar[d]\ar[r]& X\ar[d]\ar[r] &K\ar@{=}[d]\\
   K[-1]\ar[r] & P\ar[d]\ar[r]& Y \ar[d]\ar[r] &K\\
   & G'\ar@{=}[r]&G'&}$\end{center}
The exact triangle $\mathrm{RHom}_A(G',A)\rightarrow \mathrm{RHom}_A(Y,A)\rightarrow \mathrm{RHom}_A(X,A)\rightsquigarrow$ yields that $\mathrm{G}\textrm{-}\mathrm{dim}_AY=n$. Note that $\mathrm{sup}X=\mathrm{sup}Y$, so
$Y\not\in\mathcal{G}_0[-\mathrm{sup}X]$. As $\mathrm{sup}K\leq\mathrm{sup}X-1$, $P\rightarrow Y$ is a sppj morphism, it follows from the exact triangle $P\rightarrow Y\rightarrow K\rightsquigarrow$ and Proposition \ref{lem:3.2}
that $\mathrm{projdim}_AY=n$. Hence we have shown
the first statement. This yields that $\mathrm{fpd}(A)\geq\mathrm{sup}\{\mathrm{G}\textrm{-}\mathrm{dim}_AX+\mathrm{inf}X\mid
X\in\mathrm{D}^{\mathrm{b}}_{\mathrm{f}}(A)\ \textrm{with}\ \mathrm{G}\textrm{-}\mathrm{dim}_AX<\infty\}$. Also for any $Z\in\mathrm{D}^{\mathrm{b}}_{\mathrm{f}}(A)$ with $\mathrm{projdim}_AZ<\infty$, one has $\mathrm{G}\textrm{-}\mathrm{dim}_AZ\geq\mathrm{projdim}_AZ$. This shows the first equality.

If $X\in\mathrm{D}^{\mathrm{b}}_{\mathrm{f}}(A)$ with $\mathrm{amp}X<\mathrm{amp}A$, then $\mathrm{amp}(X\oplus X[\mathrm{amp}A-\mathrm{amp}X])=\mathrm{amp}A$ and $\mathrm{G}\textrm{-}\mathrm{dim}_AX=\mathrm{G}\textrm{-}\mathrm{dim}_A(X\oplus X[\mathrm{amp}A-\mathrm{amp}X])-\mathrm{amp}A+\mathrm{amp}X$, and therefore $\mathrm{G}\textrm{-}\mathrm{dim}_A(X\oplus X[\mathrm{amp}A-\mathrm{amp}X])+\mathrm{inf}(X\oplus X[\mathrm{amp}A-\mathrm{amp}X])=\mathrm{G}\textrm{-}\mathrm{dim}_AX+\mathrm{inf}X$.
Thus the second equality follows.
\end{proof}

\subsection{Comparison to maximal local-Cohen-Macaulay DG-modules}

The task of this subsection is to examine the DG version of Fact \ref{fact:1.1}.
We begin with the following AB formula for G-dimension of DG-modules.

\begin{thm}\label{lem:3.3}{\rm (AB Formula)} {\it{Let $X$ be a DG-module of finite G-dimension. One has an equality \begin{center}$\mathrm{G}\textrm{-}\mathrm{dim}_AX=\mathrm{depth}A-\mathrm{depth}_AX$.\end{center}}}
\end{thm}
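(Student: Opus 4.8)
The plan is to reduce the AB formula to the two dual identities of Lemma \ref{lem0.1} together with the interpretation of G-dimension in terms of $\mathrm{RHom}_A(X,A)$. First I would pass, as in the proofs of Lemma \ref{lem0.2} and Corollary \ref{lem0.3}, to the derived $\bar{\mathfrak{m}}$-adic completion $B=\mathrm{L}\Lambda_{\bar{\mathfrak{m}}}(A)$: this leaves $\mathrm{G}\textrm{-}\mathrm{dim}$, $\mathrm{depth}$ of $A$, and $\mathrm{depth}_A X$ unchanged (the last because $\mathrm{RHom}_A(\bar k,X)$ is already derived torsion), and it ensures that $A$ admits a normalized dualizing DG-module $R$ by \cite[Proposition 7.21]{s18}. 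So I may assume $R$ exists.

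The key step is the adjunction isomorphism $\mathrm{RHom}_A(X,R)\simeq \mathrm{RHom}_A(X,A)\otimes_A^{\mathrm{L}} R$, which holds by \cite[Proposition 1.16]{BSSW} (exactly as used in Lemma \ref{lem0.2}), valid since $X\in\mathrm{D}^{\mathrm{b}}_{\mathrm{f}}(A)$ has finite G-dimension so $\mathrm{RHom}_A(X,A)\in\mathrm{D}^{\mathrm{b}}_{\mathrm{f}}(A)$. Applying $\mathrm{depth}_A(-)$ and the second identity of Lemma \ref{lem0.1} with the complex $X$, one gets $\mathrm{depth}_A\mathrm{RHom}_A(X,R)=-\mathrm{sup}X$. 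On the other hand, I want to compute $\mathrm{depth}_A\big(\mathrm{RHom}_A(X,A)\otimes_A^{\mathrm{L}} R\big)$ directly. Here I would invoke a depth-of-tensor-product formula — the analogue over DG-rings of the classical ``depth of $M\otimes^{\mathrm L} N$'' computation, available through \cite[Proposition 4.9]{ya20} together with Nakayama's lemma (precisely the combination already cited in Lemma \ref{lem0.2} and Corollary \ref{lem0.3}): one has $\mathrm{depth}_A\big(\mathrm{RHom}_A(X,A)\otimes_A^{\mathrm{L}} R\big)=\mathrm{depth}_A R+\mathrm{inf}\,\mathrm{RHom}_A(X,A)$, because smashing against the dualizing module (whose depth is finite and detected by $\bar k$) shifts depth by exactly the infimum of $\mathrm{RHom}_A(X,A)$. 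Combining the two computations yields
\begin{equation*}
-\mathrm{sup}X=\mathrm{depth}_A R+\mathrm{inf}\,\mathrm{RHom}_A(X,A).
\end{equation*}

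To finish, I rewrite this. By Lemma \ref{lem0.1} applied to the complex $A$ itself, $\mathrm{depth}_A\mathrm{RHom}_A(A,R)=-\mathrm{sup}A=0$, so $\mathrm{depth}_A R=0$; but more to the point I need $\mathrm{depth}A$, and the standard computation $\mathrm{depth}A=-\mathrm{sup}\,\mathrm{RHom}_A(\bar k,A)$ combined with $\mathrm{inf}\,\mathrm{RHom}_A(X,A)=-\mathrm{lc.dim}_A\mathrm{RHom}_A(X,R)$-type bookkeeping should convert the displayed equation into the desired $\mathrm{G}\textrm{-}\mathrm{dim}_AX=\mathrm{sup}\,\mathrm{RHom}_A(X,A)=\mathrm{depth}A-\mathrm{depth}_A X$; the cleanest route is to specialize the displayed identity once to $X=A$ to pin down $\mathrm{depth}_A R$ in terms of $\mathrm{depth}A$, then subtract. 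The main obstacle I anticipate is the depth-of-tensor-product formula for $\mathrm{RHom}_A(X,A)\otimes_A^{\mathrm L}R$: over ordinary rings this is classical (depth of $M\otimes^{\mathrm L}$ dualizing complex), but over DG-rings one must either cite the correct statement from \cite{ya20}/\cite{Shaul} or reprove it by reducing modulo $\bar{\mathfrak m}$ via $\mathrm{RHom}_A(\bar k,-)$ and using that $R$ is dualizing; handling the amplitude/boundedness hypotheses so that all the $\inf/\sup$ arithmetic is legitimate is the delicate point, and this is where the hypothesis $\mathrm{amp}A<\infty$ and $X\in\mathrm{D}^{\mathrm b}_{\mathrm f}(A)$ of finite G-dimension will be used in full.
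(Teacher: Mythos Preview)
Your displayed identity $-\sup X=\mathrm{depth}_A R+\inf\mathrm{RHom}_A(X,A)$ is wrong, and the error is in the depth-of-tensor step. The formula $\mathrm{depth}_A(Y\otimes^{\mathrm L}_A R)=\mathrm{depth}_A R+\inf Y$ fails already for an ordinary Gorenstein local ring $A$ of positive depth with $R=A$ and $Y=\bar k$: the left side is $0$ and the right side is $\mathrm{depth}\,A$. The combination ``\cite[Proposition 4.9]{ya20} plus Nakayama'' that the paper invokes is \emph{not} a tensor formula; it is the RHom formula $\mathrm{depth}_A\mathrm{RHom}_A(Y,A)=\mathrm{depth}\,A-\sup Y$. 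Moreover, even granting your equation, it involves $\sup X$ and $\inf\mathrm{RHom}_A(X,A)$, whereas the theorem is about $\mathrm{depth}_A X$ and $\sup\mathrm{RHom}_A(X,A)$; neither quantity ever enters your computation, so the ``subtract and bookkeep'' step has no content.

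The paper's argument avoids the dualizing module and the completion entirely: since $X$ is reflexive one has $\mathrm{depth}_A X=\mathrm{depth}_A\mathrm{RHom}_A(\mathrm{RHom}_A(X,A),A)$, and then the RHom--depth identity above with $Y=\mathrm{RHom}_A(X,A)$ gives $\mathrm{depth}_A X=\mathrm{depth}\,A-\sup\mathrm{RHom}_A(X,A)=\mathrm{depth}\,A-\mathrm{G}\textrm{-}\mathrm{dim}_A X$ in one line. If you insist on routing through $R$, the fix is to take $\sup$ rather than $\mathrm{depth}$ of the tensor: use the third identity of Lemma~\ref{lem0.1}, $\sup\mathrm{RHom}_A(X,R)=-\mathrm{depth}_A X$, together with $\sup\big(\mathrm{RHom}_A(X,A)\otimes^{\mathrm L}_A R\big)=\sup\mathrm{RHom}_A(X,A)+\sup R$ (this is where Nakayama is actually used), and finally $\sup R=-\mathrm{depth}\,A$ by specializing to $X=A$.
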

\begin{proof} Since $\mathrm{G}\textrm{-}\mathrm{dim}_AX<\infty$, we have the following equalities
\begin{center}$\begin{aligned}\mathrm{depth}_AX
&=\mathrm{depth}\mathrm{RHom}_{A}(\mathrm{RHom}_{A}(X,A),A)\\
&=-\mathrm{sup}\mathrm{RHom}_{A}(X,A)+\mathrm{depth}A\\
&=-\mathrm{G}\textrm{-}\mathrm{dim}_AX+\mathrm{depth}A,
\end{aligned}$\end{center}where the second equality is by \cite[Proposition 4.9]{ya20} as $\mathrm{RHom}_{A}(X,A)\in\mathrm{D}^{\mathrm{b}}_{\mathrm{f}}(A)$, as required.
\end{proof}

Following \cite{Shaul}, if
$X\in\mathrm{D}^{\mathrm{b}}_{\mathrm{f}}(A)$ then we set
\begin{center}$\mathrm{seq.depth}_AX=\mathrm{depth}_AX-\mathrm{inf}X$,\end{center}
and call it the \emph{sequential depth} of $X$.

\begin{cor}\label{lem:3.3'}{\it{Let $X$ be a DG-module of finite G-dimension with $\mathrm{amp}X=\mathrm{amp}A$. Then we have \begin{center}$\mathrm{G}\textrm{-}\mathrm{dim}_AX+\mathrm{sup}X=\mathrm{seq.depth}A-\mathrm{seq.depth}_AX$.\end{center}}}
\end{cor}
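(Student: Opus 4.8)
The plan is to derive Corollary \ref{lem:3.3'} directly from the AB Formula (Theorem \ref{lem:3.3}) together with the definitions of sequential depth. The statement we want to prove is
\[
\mathrm{G}\textrm{-}\mathrm{dim}_AX+\mathrm{sup}X=\mathrm{seq.depth}A-\mathrm{seq.depth}_AX,
\]
and since $\mathrm{seq.depth}_AX=\mathrm{depth}_AX-\mathrm{inf}X$ and $\mathrm{seq.depth}A=\mathrm{depth}A-\mathrm{inf}A$, the right-hand side equals $\mathrm{depth}A-\mathrm{inf}A-\mathrm{depth}_AX+\mathrm{inf}X$. By Theorem \ref{lem:3.3}, $\mathrm{depth}A-\mathrm{depth}_AX=\mathrm{G}\textrm{-}\mathrm{dim}_AX$, so the right-hand side becomes $\mathrm{G}\textrm{-}\mathrm{dim}_AX+\mathrm{inf}X-\mathrm{inf}A$. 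Thus the corollary is equivalent to the identity $\mathrm{sup}X=\mathrm{inf}X-\mathrm{inf}A$, i.e. $\mathrm{amp}X=\mathrm{sup}X-\mathrm{inf}X=-\mathrm{inf}A$. Since $A$ is non-positive and $0\not\simeq A$, we have $\mathrm{sup}A=0$, hence $\mathrm{amp}A=-\mathrm{inf}A$; so the needed identity is exactly the hypothesis $\mathrm{amp}X=\mathrm{amp}A$.

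So the proof is essentially a two-line substitution: first invoke Theorem \ref{lem:3.3} to replace $\mathrm{depth}A-\mathrm{depth}_AX$ by $\mathrm{G}\textrm{-}\mathrm{dim}_AX$, then use $\mathrm{amp}A=-\mathrm{inf}A$ (valid because $A$ is non-positive with $\mathrm{sup}A=0$) and the hypothesis $\mathrm{amp}X=\mathrm{amp}A$ to convert $\mathrm{inf}X-\mathrm{inf}A$ into $\mathrm{sup}X$. I would write out the chain
\[
\mathrm{seq.depth}A-\mathrm{seq.depth}_AX=(\mathrm{depth}A-\mathrm{inf}A)-(\mathrm{depth}_AX-\mathrm{inf}X)=\mathrm{G}\textrm{-}\mathrm{dim}_AX+\mathrm{inf}X-\mathrm{inf}A=\mathrm{G}\textrm{-}\mathrm{dim}_AX+\mathrm{sup}X,
\]
citing Theorem \ref{lem:3.3} at the middle equality and the hypothesis $\mathrm{amp}X=\mathrm{amp}A$ at the last.

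There is no real obstacle here; the only thing to be careful about is the bookkeeping on $\mathrm{inf}A$ versus $\mathrm{amp}A$, and the fact that $X$ having finite G-dimension is needed precisely so that Theorem \ref{lem:3.3} applies — in particular $X$ is reflexive and nonzero, so all the invariants involved are finite integers and the arithmetic is legitimate. I would state explicitly at the start that we may assume $X\not\simeq0$ (otherwise there is nothing to prove, or one adopts the convention that both sides are $-\infty$), so that $\mathrm{sup}X,\mathrm{inf}X$ are genuine integers.
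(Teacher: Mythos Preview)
Your proof is correct and follows essentially the same approach as the paper: both arguments unfold the definition of sequential depth, invoke the AB Formula (Theorem \ref{lem:3.3}) to replace $\mathrm{depth}A-\mathrm{depth}_AX$ by $\mathrm{G}\textrm{-}\mathrm{dim}_AX$, and then use $\mathrm{amp}X=\mathrm{amp}A$ together with $\mathrm{sup}A=0$ to match up the remaining terms. The only cosmetic difference is that the paper runs the chain of equalities starting from $\mathrm{G}\textrm{-}\mathrm{dim}_AX+\mathrm{sup}X$ rather than from $\mathrm{seq.depth}A-\mathrm{seq.depth}_AX$.
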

\begin{proof} One has the following equalities
\begin{center}$\begin{aligned}\mathrm{G}\textrm{-}\mathrm{dim}_AX+\mathrm{sup}X
&=\mathrm{depth}A-\mathrm{depth}_AX+\mathrm{sup}X\\
&=\mathrm{depth}A-\mathrm{inf}A-(\mathrm{depth}_AX-\mathrm{inf}X)\\
&=\mathrm{seq.depth}A-\mathrm{seq.depth}_AX,
\end{aligned}$\end{center}where the first one is by Theorem \ref{lem:3.3}
and the second one is by $\mathrm{amp}X=\mathrm{amp}A$.
\end{proof}

We are now ready to prove Corollary \ref{the3.4}.

{\bf Proof of Corollary \ref{the3.4}.} (1) The ``only if" part holds by noting that $A\in\mathcal{H}$ by \cite[Theorem 4.1]{Shaul}. For the ``if" part, we assume that $X$ is an object in $\mathcal{H}$. It follows that $\mathrm{lc.~dim}_AA\geq\mathrm{lc.dim}_AX-\mathrm{sup}X\geq\mathrm{depth}_AX-\mathrm{inf}X$. Since
 $A$ is local Cohen-Macaulay, we have the following (in)equalities
\begin{center}$\begin{aligned}\mathrm{seq.depth}_AX
&\leq\mathrm{lc.dim}_AX-\mathrm{sup}X\\
&\leq\mathrm{lc.dim}A\\
&=\mathrm{seq.depth}A\\
&=\mathrm{seq.depth}_AX,
\end{aligned}$\end{center}
where the second equality follows from Corollary \ref{lem:3.3'}, which implies that
$\mathrm{ampR}\Gamma_{\bar{\mathfrak{m}}}X=\mathrm{amp}A=\mathrm{amp}X$ and $\mathrm{lc.dim}_AX=\mathrm{dim}\mathrm{H}^{0}(A)+\mathrm{sup}X$. Thus $X\in\mathcal{M}$.

(2) For the ``only if" part, since $A$ is local Gorentein, it follows from \cite[Theorem 7.26]{s18} that $X\simeq\mathrm{RHom}_{A}(\mathrm{RHom}_{A}(X,A),A)$ and $\mathrm{ampR}\Gamma_{\bar{\mathfrak{m}}}X=\mathrm{amp}\mathrm{RHom}_{A}(X,A)$ for any $X\in\mathrm{D}^{\mathrm{b}}_{\mathrm{f}}(A)$. Let $X\in\mathcal{M}$. Then $\mathrm{amp}\mathrm{RHom}_{A}(X,A)=\mathrm{amp}A$, it follows from Corollary \ref{lem:3.3'} that
 \begin{center}$\mathrm{G}\textrm{-}\mathrm{dim}_AX+\mathrm{sup}X
=\mathrm{seq.depth}A-\mathrm{seq.depth}_AX=0$.\end{center}
Thus $X\in\mathcal{H}$ and $\mathcal{H}=\mathcal{M}$ by (1).

For the ``if" part, we assume that $\mathcal{H}=\mathcal{M}$.
Then $A$ is local-Cohen-Macaulay by (1). Consider the sppj resolution $P^\bullet$ of the DG-module $k:=\bar{k}\oplus\bar{k}[-\mathrm{inf}A]$
\begin{center}$k_{d}\rightarrow P_{d-1}\rightarrow\cdots P_{1}\rightarrow P_{0}\rightarrow k$.\end{center}
By the proof of \cite[Proposition 4.4]{YL}, one has
 $G=A[-\mathrm{sup}k_d]\oplus k_d\in\mathcal{M}$ and $k$ han an sppj resolution
\begin{center}$G\rightarrow A[-\mathrm{sup}k_d]\oplus P_{d-1}\rightarrow\cdots P_{1}\rightarrow P_{0}\rightarrow k$.\end{center}Thus, G-dim$_Ak< \infty$ by Theorem \ref{thm:main}(1) and hence $A$ is Gorenstein.\hfill$\Box$

\subsection{Buchweitz-Happel Theory and its inverse}\label{subsection:4.3}
In this subsection we will employ Theorem \ref{thm:main} to prove Corollary \ref{cor1.6} in the introduction.

Let $\mathcal{T}$ be a triangulated category and $\mathcal{D}\subseteq \mathcal{C}$ be subcategories in $\mathcal{T}$. As usual, we denote by $ \langle\mathcal{D}\rangle$ the smallest thick triangulated subcategory containing $\mathcal{D}$, and denote by $\mathcal{C}/\langle\mathcal{D}\rangle$ the Verdier's quotient triangulated category.
In fact, $\langle\mathcal{P}\rangle$ is the thick subcategory of the derived category $\textrm{D}^{\mathrm{b}}(A)$ generated by $A$. We call the Verdier quotient $ \textrm{D}_{\mathrm{sg}}(A):=\textrm{D}_{\mathrm{f}}^{\mathrm{b}}(A)/\langle\mathcal{P}\rangle$ the \emph{singularity category} of $A$.
Each morphism $f : X\rightarrow Y$  in $\textrm{D}_{\mathrm{sg}}(A)$ is given
by an equivalence class of left fractions $s\setminus f$ as presented by $s\setminus f: X \stackrel{f}\longrightarrow Z\stackrel{s}\Longleftarrow Y$ , where the doubled arrow means $s$  lies in the compatible saturated multiplicative system corresponding to $\langle\mathcal{P}\rangle$.

 In the following, we set
$$A[< 0]^{\perp_{f}} = \{X \in \mathcal{R}(A)\mid\mathrm{Hom}_{\mathrm{D}(A)}(A[< 0],X) = 0\};$$
$$^{\perp_{f}}A[>0] = \{X \in \mathcal{R}(A) \mid\mathrm{Hom}_{\mathrm{D}(A)}(X, A[>0]) = 0\}.$$

Recall from Corollary \ref{the3.4} that
$$\mathcal{H}=\{X\in\mathcal{G}\mid
\mathrm{ampR}\Gamma_{\bar{\mathfrak{m}}}X\geq\mathrm{amp}X=\mathrm{amp}A\}.$$

\begin{lem}\label{lem} Let $X\in \mathcal{R}(A)$. Assume that
 $$\mathrm{ampR}\Gamma_{\bar{\mathfrak{m}}}X\geq\mathrm{amp}X\geq \mathrm{amp}A.$$
 Then $X\in \mathcal{H}$ if and only if $\sup X[\sup X]\leq0$ and $\mathrm{G}\text{-}\mathrm{dim}_AX[\sup X]\leq0$ if and only if $X[\sup X]\in  {^{\perp_{f}}{A[>0]}}\cap{{A[<0]}^{\perp_{f}}}$.
\end{lem}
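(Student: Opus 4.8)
The plan is to prove the two biconditionals in turn, exploiting the hypothesis $\mathrm{ampR}\Gamma_{\bar{\mathfrak{m}}}X\geq\mathrm{amp}X\geq\mathrm{amp}A$ together with the shift‐compatibility of all the relevant invariants (Remark \ref{lem:3.1}(4), and the fact that $\mathrm{sup}$, $\mathrm{amp}$, $\mathrm{ampR}\Gamma_{\bar{\mathfrak{m}}}$ all shift by $n$ under $[n]$). Set $Y=X[\sup X]$, so $\sup Y=0$, $\mathrm{amp}Y=\mathrm{amp}X$ and $\mathrm{ampR}\Gamma_{\bar{\mathfrak{m}}}Y=\mathrm{ampR}\Gamma_{\bar{\mathfrak{m}}}X$; also $Y\in\mathcal{R}(A)$ iff $X\in\mathcal{R}(A)$, which holds by assumption.

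First I would treat the equivalence ``$X\in\mathcal{H}$ $\iff$ $\sup Y\leq 0$ and $\mathrm{G}\text{-}\mathrm{dim}_AY\leq 0$''. By definition $X\in\mathcal{H}$ means $X\in\mathcal{G}$ and $\mathrm{ampR}\Gamma_{\bar{\mathfrak{m}}}X\geq\mathrm{amp}X=\mathrm{amp}A$; under the standing hypothesis the amplitude conditions already force $\mathrm{amp}X=\mathrm{amp}A$ precisely when $\mathrm{amp}X\leq\mathrm{amp}A$, so the only real content is $X\in\mathcal{G}$. Now $X\in\mathcal{G}$ means $\mathrm{G}\text{-}\mathrm{dim}_AX=-\sup X$, i.e. (by Remark \ref{lem:3.1}(4)) $\mathrm{G}\text{-}\mathrm{dim}_AY=\mathrm{G}\text{-}\mathrm{dim}_AX+\sup X=0$. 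Since $\mathrm{amp}X\geq\mathrm{amp}A$ gives $\mathrm{amp}Y\geq\mathrm{amp}A$, Corollary \ref{lem0.3} yields $\mathrm{G}\text{-}\mathrm{dim}_AY\geq-\sup Y=0$, so the inequality $\mathrm{G}\text{-}\mathrm{dim}_AY\leq 0$ is equivalent to the equality $\mathrm{G}\text{-}\mathrm{dim}_AY=0$, hence to $X\in\mathcal{G}$. The condition $\sup Y\leq 0$ is automatic (indeed $\sup Y=0$), so it can be recorded for free. This direction is essentially bookkeeping once Corollary \ref{lem0.3} and Remark \ref{lem:3.1} are invoked.

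Next I would establish ``$\sup Y\leq 0$ and $\mathrm{G}\text{-}\mathrm{dim}_AY\leq 0$ $\iff$ $Y\in{^{\perp_{f}}A[>0]}\cap A[<0]^{\perp_{f}}$''. The inclusion $Y\in A[<0]^{\perp_{f}}$ unwinds to $\mathrm{Hom}_{\mathrm{D}(A)}(A[i],Y)=0$ for all $i<0$, i.e. $\mathrm{H}^{-i}(Y)=0$ for $i<0$ — equivalently $\sup Y\leq 0$; that matches the first condition. The inclusion $Y\in{^{\perp_{f}}A[>0]}$ unwinds to $\mathrm{Hom}_{\mathrm{D}(A)}(Y,A[i])=0$ for all $i>0$, i.e. $\mathrm{H}^{i}(\mathrm{RHom}_A(Y,A))=0$ for $i>0$; since $Y\in\mathcal{R}(A)$ this says exactly $\mathrm{sup}\,\mathrm{RHom}_A(Y,A)=\mathrm{G}\text{-}\mathrm{dim}_AY\leq 0$. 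So this biconditional is just the translation of the $\perp_f$ notation into cohomology vanishing, plus the observation that $Y\in\mathcal{R}(A)$ throughout. I would combine the two translations and remember to note $Y\neq 0$ when needed (if $X=0$ everything is trivially true on both sides).

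The step I expect to require the most care is the interface between $\mathcal{R}(A)$-membership and the $\perp_f$-conditions: one must be sure that $\mathrm{Hom}_{\mathrm{D}(A)}(Y,A[i])$ for $Y\in\mathcal{R}(A)$ really computes $\mathrm{H}^i\mathrm{RHom}_A(Y,A)$ and that $\mathrm{RHom}_A(Y,A)\in\mathrm{D}^{\mathrm{b}}_{\mathrm{f}}(A)$ is available (it is, by reflexivity), so that ``$\mathrm{H}^i=0$ for all $i>0$'' is genuinely equivalent to the single inequality $\sup\mathrm{RHom}_A(Y,A)\leq 0$; and symmetrically that $\mathrm{Hom}_{\mathrm{D}(A)}(A[i],Y)=\mathrm{H}^{-i}(Y)$. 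Once these identifications are in place, no further computation is needed and the lemma follows by chaining the equivalences through $Y=X[\sup X]$.
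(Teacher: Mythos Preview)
Your approach follows the paper's proof and is largely correct, but there is a gap in the backward direction of the first equivalence. You assert that ``the only real content is $X\in\mathcal{G}$'', meaning that under the standing hypothesis, $X\in\mathcal{H}$ reduces to $X\in\mathcal{G}$. This would require knowing that $X\in\mathcal{G}$ (equivalently, $\mathrm{G}\text{-}\mathrm{dim}_A Y\leq 0$) forces $\mathrm{amp}X\leq\mathrm{amp}A$, so that together with the hypothesis $\mathrm{amp}X\geq\mathrm{amp}A$ one obtains $\mathrm{amp}X=\mathrm{amp}A$. Neither Corollary~\ref{lem0.3} nor Remark~\ref{lem:3.1} supplies this: Corollary~\ref{lem0.3} only yields the lower bound $\mathrm{G}\text{-}\mathrm{dim}_A X\geq -\sup X$, not an amplitude estimate on $X$ itself.

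The paper fills this in explicitly. From reflexivity $Y\simeq\mathrm{RHom}_A(\mathrm{RHom}_A(Y,A),A)$ and the basic bound $\inf\mathrm{RHom}_A(M,N)\geq -\sup M+\inf N$ (\cite[Proposition~1.2]{BSSW}) one gets
\[
\inf Y\;\geq\;-\sup\mathrm{RHom}_A(Y,A)+\inf A\;\geq\;0+\inf A,
\]
whence $\mathrm{amp}X=\mathrm{amp}Y=-\inf Y\leq -\inf A=\mathrm{amp}A$. Once this line is added your argument is complete and agrees with the paper's; your treatment of the second biconditional via the unpacking of the $\perp_f$ conditions is correct as written.
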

\begin{proof} We will start by proving the first equivalence. Let $X\in\mathcal{H}$.  Since $X\in \mathcal{G}$, $\mathrm{G}\text{-}\mathrm{dim}_AX = - \sup X$. Therefore $\mathrm{G}\text{-}\mathrm{dim}_AX[\sup X]= \mathrm{G}\text{-}\mathrm{dim}_AX + \sup X= 0$ and $\sup X[\sup X]\leq0$.
Conversely, suppose that $\sup X[\sup X]\leq0$ and $\mathrm{G}\text{-}\mathrm{dim}_AX[\sup X]\leq0$. Then $\mathrm{G}\text{-}\mathrm{dim}_AX\leq -\sup X$ as $\mathrm{G}\text{-}\mathrm{dim}_AX[\sup X]=\mathrm{G}\text{-}\mathrm{dim}_AX + \sup X$.  By Corollary \ref{lem0.3}, we have $\mathrm{G}\text{-}\mathrm{dim}_AX\geq -\sup X$. It follows that $X\in \mathcal{G}$.  In order to show $X\in\mathcal{H}$, it
suffices to show $\mathrm{amp}X\leq \mathrm{amp}A$.  Since  $X[\sup X]\in \mathcal{R}(A)$, by \cite[Lemma 3.2(i)]{BSSW}, we have
$$\inf(X[\sup X])=\mathrm{inf}~\mathrm{RHom}_{A}(\mathrm{RHom}_{A}(X[\sup X],A),A)
\geq-\mathrm{sup}\mathrm{RHom}_{A}(X[\sup X],A)+\mathrm{inf}(A).$$
Then
$0\geq\mathrm{G}\text{-}\mathrm{dim}_AX[\sup X]=\sup(\mathrm{RHom}_{A}(X[\sup X],A))\geq\inf(A)-\inf(X[\sup X])$,
it follows that $-\inf(A)\geq -\inf(X[\sup X])$. On the other hand, since $\sup A=0$ and $0\geq \sup X[\sup X]$, we have
$\mathrm{amp} A=-\inf(A)\geq \sup X[\sup X]-\inf(X[\sup X])=\mathrm{amp}X[\sup X]= \mathrm{amp}X$, as desired.

The second equivalence follows from the equalities
$^{\perp_{f}}A[>0]  =\{X \in \mathcal{R}(A)\mid\mathrm{H}^{i}(\mathrm{RHom}_{A}(X,A)) = 0~\textrm{for}~i>0\}$
and $A[< 0]^{\perp_{f}} =\{X \in \mathcal{R}(A)\mid\mathrm{H}^{i}(\mathrm{RHom}_{A}(A,X)) = 0~\textrm{for}~i>0\}
=\{X \in \mathcal{R}(A)\mid\mathrm{H}^{i}(X)= 0~\mathrm{for}~i>0\}$.
\end{proof}
In the following, we set
$\mathcal{A} = {^{\perp_{f}}{A[>0]}}\cap{{A[<0]}^{\perp_{f}}}.$

\begin{remark} \label{rem4.6} {\rm
Let $A$ be a non-positive DG-algebra over a field $k$ satisfying the following conditions:
\begin{enumerate}
\item[(i)] $A$ is proper, i.e., $\mathrm{dim}_{k} \bigoplus_{i \in \mathbb{Z}} \mathrm{H}^{i}(A) < \infty$;

\item[(ii)] $A$ is Gorenstein, i.e., the thick subcategory $\mathrm{per}~A$ of the derived category $\mathrm{D}(A)$ generated by $A$ coincides with the thick subcategory generated by $DA$, where $D = \Hom_{k}(-, k)$ is the $k$-dual.
\end{enumerate}

Assume that $A$ is a proper Gorenstein DG-algebra. Recently, Jin defined that a DG-$A$-module $X\in\mathrm{D}^{\mathrm{b}}(A)$ is called \emph{Cohen-Macaulay} if $\sup M\leq 0 $ and $\Hom_{\mathrm{D}^{\mathrm{b}}(A)}(M,A[i]) =0$  for all $i >0$
(see \cite[Definition 2.1]{jin20}). If $A$ is an ordinary Gorenstein algebra, then Cohen-Macaulay DG-modules defined by Jin is canonically equivalent to the usual (maximal) Cohen-Macaulay modules. Note that if a DG-module $X$ belongs to $\mathcal{R}(A)$, then $X\in\mathcal{A}$ if and only if it is Cohen-Macaulay defined by Jin in the sense of \cite{jin20}.

On the other hand, from Corollary \ref{the3.4} we know that $\mathcal{H}$ is equal to the class of maximal local-Cohen-Macaulay DG-modules defined by Shaul \cite{Shaul} over local Gorenstein DG-rings, see Definition \ref{def}.  These two definitions are completely different at first glance. For example, one can see that the class of maximal local-Cohen-Macaulay DG-modules defined by Shaul is closed under suspensions, but the class of Cohen-Macaulay DG-modules defined by Jin is not. Fortunately, by Lemma \ref{lem}, these two definitions are the same over commutative local Gorenstein DG-algebra over a field $k$ under the condition that $\mathrm{ampR}\Gamma_{\bar{\mathfrak{m}}}X\geq\mathrm{amp}X\geq \mathrm{amp}A$ for all $X\in{\mathcal{R}(A)}$. {This condition is natural and very often met. For example, this holds for any complex $X\in\mathrm{D}^{\mathrm{b}}_{\mathrm{f}}(A)$ over a commutative noetherian local ring $A$ by [31, Propositions 2.8 and 3.4]. Moreover, we assume that $A$ is a DG-ring with $\mathrm{dim}\mathrm{H}^0(A)=0$. Let $X\in\mathrm{D}^{\mathrm{b}}_{\mathrm{f}}(A)$ with $\mathrm{amp}X\geq \mathrm{amp}A$. Then $\mathrm{supR}\Gamma_{\bar{\mathfrak{m}}}X\geq\mathrm{sup}X$ by [31, Remark 2.3 and Theorem 2.15] and $\mathrm{infR}\Gamma_{\bar{\mathfrak{m}}}X\leq\mathrm{inf}X$ by [31, Propositions 3.3 and 3.5]. So we have $\mathrm{ampR}\Gamma_{\bar{\mathfrak{m}}}X\geq\mathrm{amp}X$, as desired.}}
\end{remark}

\begin{lem}\label{lem6.2} The subcategory $\mathcal{R}(A)$ is thick and it is the smallest triangulated subcategory of $\mathrm{D}^{\mathrm{b}}_{\mathrm{f}}(A)$ that contains the objects from $\mathcal{A}$.
\end{lem}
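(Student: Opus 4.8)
The plan is to prove two things: first, that $\mathcal{R}(A)$ is a thick subcategory of $\mathrm{D}^{\mathrm{b}}_{\mathrm{f}}(A)$; second, that it coincides with $\langle\mathcal{A}\rangle$, the smallest thick (equivalently, since we are inside $\mathrm{D}^{\mathrm{b}}_{\mathrm{f}}(A)$ and $\mathcal{R}(A)$ turns out to be triangulated, the smallest triangulated) subcategory of $\mathrm{D}^{\mathrm{b}}_{\mathrm{f}}(A)$ containing $\mathcal{A}$. Thickness of $\mathcal{R}(A)$ is essentially already available: Proposition \ref{lem3.5} shows that $\mathcal{R}(A)$ is closed under taking the third object of an exact triangle, Remark \ref{lem:3.1}(4) shows it is closed under shifts, and closure under direct summands follows from the fact that reflexivity (finiteness of $\mathrm{RHom}_A(-,A)$ together with the biduality morphism being an isomorphism) is detected on summands — both $\mathrm{RHom}_A(-,A)$ and the double-dual functor are additive, and an idempotent splitting of $X$ induces a compatible idempotent splitting of $\mathrm{RHom}_A(\mathrm{RHom}_A(X,A),A)$, so $\delta^X$ an isomorphism forces $\delta^{X'}$ an isomorphism for each summand $X'$. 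Hence $\mathcal{R}(A)$ is thick.

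Next, the inclusion $\langle\mathcal{A}\rangle\subseteq\mathcal{R}(A)$ is immediate: $\mathcal{A}\subseteq\mathcal{R}(A)$ by definition of $\mathcal{A}$, and $\mathcal{R}(A)$ is thick, so it contains the thick closure of $\mathcal{A}$. The substance is the reverse inclusion $\mathcal{R}(A)\subseteq\langle\mathcal{A}\rangle$. Here I would argue by induction on $\mathrm{G}\text{-}\mathrm{dim}_AX+\sup X$ (which is $\geq 0$ for nonzero $X\in\mathcal{R}(A)$ with $\mathrm{amp}X\geq\mathrm{amp}A$ by Corollary \ref{lem0.3}, and can be handled in the remaining amplitude range by passing to $X\oplus X[\mathrm{amp}A-\mathrm{amp}X]$ exactly as in the proof of Theorem \ref{thm:main}(2) — note $\langle\mathcal{A}\rangle$ is closed under summands, so $X\oplus X[\mathrm{amp}A-\mathrm{amp}X]\in\langle\mathcal{A}\rangle$ gives $X\in\langle\mathcal{A}\rangle$). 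For the base case $\mathrm{G}\text{-}\mathrm{dim}_AX=-\sup X$, i.e. $X\in\mathcal{G}$ with $\mathrm{amp}X\geq\mathrm{amp}A$: if moreover $\mathrm{ampR}\Gamma_{\bar{\mathfrak{m}}}X\geq\mathrm{amp}X$ then $X\in\mathcal{H}$ and Lemma \ref{lem} identifies $X[\sup X]$ as an object of $\mathcal{A}$, so $X\in\langle\mathcal{A}\rangle$; and if the $\mathrm{R}\Gamma_{\bar{\mathfrak{m}}}$-amplitude condition fails, replacing $X$ by $X\oplus P$ for a suitable $P\in\mathcal{P}\subseteq\mathcal{G}_0\subseteq\mathcal{A}[\ast]$ (as in Lemma \ref{lem:3.2'}) repairs it without leaving $\langle\mathcal{A}\rangle$, since $\mathcal{P}[i]\subseteq\mathcal{G}_0[i]\subseteq\mathcal{R}(A)$ and every shift of a summand of $A$ lies in $\langle\mathcal{A}\rangle$ because $A$ itself does (for $A$, $\mathrm{RHom}_A(A,A)\simeq A$ has cohomology concentrated in degree $0$, so $A\in\mathcal{A}$). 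For the inductive step, with $\mathrm{G}\text{-}\mathrm{dim}_AX+\sup X=n\geq 1$, apply Lemma \ref{lem:6.00} (after first arranging the amplitude condition as above, and shifting so that $\sup X$ is as we want) to obtain an exact triangle $X\to P^\ast\to Y\rightsquigarrow$ with $P^\ast\in\mathcal{P}[-\sup X]$ and $Y\in\mathcal{R}(A)$, $\mathrm{amp}Y\geq\mathrm{amp}A$; the proof of Theorem \ref{thm:main}(1) shows $\mathrm{G}\text{-}\mathrm{dim}_AY+\sup Y\leq n-1$, so $Y\in\langle\mathcal{A}\rangle$ by the inductive hypothesis, and then $X\in\langle\mathcal{A}\rangle$ because it sits in a triangle with $P^\ast\in\langle\mathcal{A}\rangle$ and $Y\in\langle\mathcal{A}\rangle$ and $\langle\mathcal{A}\rangle$ is triangulated.

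The main obstacle I anticipate is bookkeeping the shifts and the amplitude/$\mathrm{R}\Gamma_{\bar{\mathfrak{m}}}$-amplitude hypotheses cleanly: Lemma \ref{lem} is stated under the standing assumption $\mathrm{ampR}\Gamma_{\bar{\mathfrak{m}}}X\geq\mathrm{amp}X\geq\mathrm{amp}A$, whereas a general reflexive $X$ need not satisfy either inequality, so the reduction to that case — by forming $X\oplus X[\mathrm{amp}A-\mathrm{amp}X]$ and, if necessary, further adding free summands to boost $\mathrm{ampR}\Gamma_{\bar{\mathfrak{m}}}$ — has to be done carefully and shown to stay inside $\langle\mathcal{A}\rangle$ in both directions. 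A secondary point to get right is that $\langle\mathcal{A}\rangle$ is automatically closed under direct summands (being thick), which is what lets the $\oplus$-tricks descend; once that is in place, combining Corollary \ref{lem0.3}, Lemmas \ref{lem:6.00} and \ref{lem}, and the inductive structure of the proof of Theorem \ref{thm:main}(1) yields the result. Finally, since $\mathcal{R}(A)$ is thick and equals $\langle\mathcal{A}\rangle$, it is in particular triangulated and is the smallest triangulated subcategory of $\mathrm{D}^{\mathrm{b}}_{\mathrm{f}}(A)$ containing $\mathcal{A}$, which is the assertion of the lemma.
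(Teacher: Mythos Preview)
Your thickness argument and the inclusion $\langle\mathcal{A}\rangle\subseteq\mathcal{R}(A)$ are fine and match the paper. The reverse inclusion $\mathcal{R}(A)\subseteq\langle\mathcal{A}\rangle$ is where you take an unnecessary detour and make a genuine mis-citation.

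First, the detour: you do not need Lemma \ref{lem} or any local cohomology amplitude condition to place $\mathcal{G}$-objects in $\langle\mathcal{A}\rangle$. The inclusion $\mathcal{G}_0\subseteq\mathcal{A}$ is immediate from the definitions: for $G\in\mathcal{G}_0$ one has $\sup G=0$ and $\sup\mathrm{RHom}_A(G,A)=0$, which say exactly that $G\in A[<0]^{\perp_f}$ and $G\in{^{\perp_f}A[>0]}$. By Remark \ref{lem:3.1}(2), any $X\in\mathcal{G}$ with $\mathrm{amp}X\geq\mathrm{amp}A$ satisfies $X[\sup X]\in\mathcal{G}_0\subseteq\mathcal{A}$, so $X\in\langle\mathcal{A}\rangle$ directly. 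Your route through Lemma \ref{lem} forces you to arrange $\mathrm{amp}\mathrm{R}\Gamma_{\bar{\mathfrak{m}}}X\geq\mathrm{amp}X$, and your proposed fix (adding a free summand as in Lemma \ref{lem:3.2'}) does not obviously produce that inequality: Lemma \ref{lem:3.2'} controls $\mathrm{amp}$, not $\mathrm{amp}\mathrm{R}\Gamma_{\bar{\mathfrak{m}}}$.

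Second, the mis-citation: in your inductive step with $n\geq 1$ you invoke Lemma \ref{lem:6.00}, but that lemma has the hypothesis $X\in\mathcal{G}$, i.e.\ $n=0$, so it does not apply. The triangle you want (with $\mathrm{G}\text{-}\mathrm{dim}_AY$ dropping by one) is the one constructed in the proof of Theorem \ref{thm:main}(1), equivalence $(i)\Leftrightarrow(iv)$, not Lemma \ref{lem:6.00}; and there $P^\ast\in\mathcal{P}[\mathrm{G}\text{-}\mathrm{dim}_AX]$, not $\mathcal{P}[-\sup X]$.

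The paper avoids both issues by not inducting at all: once $\mathcal{P}\subseteq\mathcal{G}_0\subseteq\mathcal{A}$ is noted, Theorem \ref{thm:main}(1)(iv) already writes any $X\in\mathcal{R}(A)$ with $\mathrm{amp}X\geq\mathrm{amp}A$ as an object of $\mathcal{G}_0[-\sup X]\ast\mathcal{P}[-\sup X+1]\ast\cdots\ast\mathcal{P}[-\sup X+n]\subseteq\langle\mathcal{A}\rangle$, and Theorem \ref{thm:main}(2) plus thickness of $\langle\mathcal{A}\rangle$ handles the case $\mathrm{amp}X<\mathrm{amp}A$ exactly as you described.
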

\begin{proof} Obviously, $\mathcal{R}(A)$ is closed under shifts and isomorphisms.
Assume that $X\in{\mathcal{R}(A)}$ and $X'$ is a direct summand of $X$. There
exits two split triangles $X'\s{u}\rightarrow X\s{v}\rightarrow X''\s{0}\rightarrow X'[1]$ and
$X''\s{p}\rightarrow X\s{q}\rightarrow X'\s{0}\rightarrow X''[1]$ in $\mathrm{D}^{\mathrm{b}}_{\mathrm{f}}(A)$. For simplicity, we write $((-, A),A)$ for $\mathrm{RHom}_{A}(\mathrm{RHom}_{A}(-,A),A)$. Then we have
the following commutative diagrams of exact triangles:
\begin{equation}\label{3.4-1}
\begin{aligned}
\xymatrix@C=20pt@R=18pt{
  X' \ar[d]_{\delta^{X'}} \ar[r]^{u} & X \ar[d]_{\delta^{X}} \ar[r]^{v} & X'' \ar[d]_{\delta^{X''}} \ar[r]^{0} & X'[1] \ar[d]_{\delta^{X'}[1]} \\
  ((X', A),A) \ar[r]^{} &((X, A),A) \ar[r] & ((X'', A),A) \ar[r]^{0\ \ } & ((X', A),A)[1],}
  \end{aligned}
\end{equation}
\begin{equation}\label{3.4-2}
\begin{aligned}
\xymatrix@C=20pt@R=18pt{
  X'' \ar[d]_{\delta^{X''}} \ar[r]^{p} & X \ar[d]_{\delta^{X}} \ar[r]^{q} & X' \ar[d]_{\delta^{X'}} \ar[r]^{0} & X''[1] \ar[d]_{\delta^{X''}[1]} \\
  ((X'', A),A) \ar[r]^{} &((X, A),A) \ar[r]^{} & ((X', A),A) \ar[r]^{0\ \ } & ((X'', A),A)[1].}
  \end{aligned}
\end{equation}
Since $u$ is a split monomorphism and $\delta^{X}$ is an isomorphism, it is straightforward to check that $\delta^{X'}$ is a split monomorphism by the commutative diagram \eqref{3.4-1}. On the other hand,  it follows from the commutativity of the diagram \eqref{3.4-2} that $\delta^{X'}q$ is an epimorphism, which implies that $\delta^{X'}$ is also an epimorphism. Thus, $\delta^{X'}$ is an isomorphism and $X'$ belongs to $\mathcal{R}(A)$, and therefore $\mathcal{R}(A)$ is a thick subcategory of $\mathrm{D}^{\mathrm{b}}_{\mathrm{f}}(A)$ by Proposition \ref{lem3.5}.

Recall that $\langle\mathcal{A}\rangle$ is the smallest thick triangulated subcategory of $\mathrm{D}^{\mathrm{b}}_{\mathrm{f}}(A)$ that contains the objects from $\mathcal{A}$. Since $\mathcal{A}\subseteq \mathcal{R}(A)$, it follows that $\langle\mathcal{A}\rangle\subseteq\mathcal{R}(A)$. For the reverse containment, we assume that $X$ is a non-zero object in $\mathcal{R}(A)$. Note that $\mathcal{R}(A)$ is precisely the subcategory of DG-modules with finite G-dimension. If $\mathrm{amp}X\geq\mathrm{amp}A$, applying Theorem \ref{thm:main}(1), there exists a natural number $m$ such that $X$ belongs to $\mathcal{G}_0[-\mathrm{sup}X]\ast\mathcal{P}[-\mathrm{sup}X+1]\ast\cdots\ast\mathcal{P}[-\mathrm{sup}X+m]$. By the fact that $\mathcal{P}\subseteq\mathcal{G}_0\subseteq \mathcal{A}$, we deduce that $X$ belongs to $\langle\mathcal{A}\rangle$.
If $\mathrm{amp}X<\mathrm{amp}A$, applying Theorem \ref{thm:main}(2), then
$X\oplus X[\mathrm{amp}A-\mathrm{amp}X]$ belongs to $\mathcal{P}[-\mathrm{sup}X]\ast\cdots\ast\mathcal{P}[-\mathrm{sup}X+n-1]\ast\mathcal{G}_0[-\mathrm{sup}X+n]$ for some $n$. Similarly, since $\langle\mathcal{A}\rangle$ is thick, one has $X\in\langle\mathcal{A}\rangle$. Altogether, we have $\mathcal{R}(A)=\langle\mathcal{A}\rangle$.
\end{proof}

Let $\mathcal{T}$ be a triangulated category. Recall that a subcategory $\omega\subseteq{\mathcal{T}}$ is called \emph{presilting} if $\mathrm{Hom}_{\mathcal{T}}(M,M'[i])=0$ for all $M, M'\in \omega$ and
$i>0$.
Let $\mathcal{E}$ be an arbitrary category. A full additive subcategory $\mathcal{M} \subseteq \mathcal{E}$ is
called \emph{contravariantly finite}, if every object $X$ in $\mathcal{E}$ admits a right $\mathcal{M}$-approximation,
i.e. there exists an object $M \in\mathcal{ M}$ and a morphism $f : M \rightarrow X$, such that the induced
map $\Hom_{\mathcal{E}}(N, M)\rightarrow \Hom_{\mathcal{E}}(N, X)$ is surjective for all $N \in \mathcal{M}$. Dually, we define
the notion of a \emph{covariantly finite} subcategory. We say that $M$ is \emph{functorially finite}
if it is both covariantly and contravariantly finite.

\begin{lem} \label{lem3.4} The following statement hold.

\begin{enumerate}
\item $\mathcal{A}$ is closed under extensions and direct summands and finite direct sums in $\mathcal{R}(A)$.

\item The subcategory $\mathcal{P}=\mathrm{add}~A$ is  presilting and functorially finite in $\mathcal{R}(A)$.
\end{enumerate}
\end{lem}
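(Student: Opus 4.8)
The plan is to reduce everything to statements about the single object $A$ and then read them off from the cohomological description of $\mathcal{A}$. By Lemma~\ref{lem} we have
$$\mathcal{A}=\{X\in\mathcal{R}(A)\mid \mathrm{sup}\,X\leq 0\ \text{and}\ \mathrm{sup}\,\mathrm{RHom}_A(X,A)\leq 0\},$$
and for every $X$ and every $i$ we have $\mathrm{Hom}_{\mathrm{D}(A)}(A,X[i])=\mathrm{H}^i(X)$ and $\mathrm{Hom}_{\mathrm{D}(A)}(X,A[i])=\mathrm{H}^i(\mathrm{RHom}_A(X,A))$. Since every object of $\mathcal{P}=\mathrm{add}\,A$ is a direct summand of a finite direct sum of copies of $A$ and all the relevant $\mathrm{Hom}$-functors are additive, each assertion reduces to the case $N=A$ (resp. $M=M'=A$). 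Granting this, part (3) is immediate: for $X\in\mathcal{R}(A)$ both $X$ and $\mathrm{RHom}_A(X,A)$ lie in $\mathrm{D}^{\mathrm{b}}_{\mathrm{f}}(A)$, so $\mathrm{H}^i(X)=0$ for $i>\mathrm{sup}\,X$ and $\mathrm{H}^i(\mathrm{RHom}_A(X,A))=0$ for $i>\mathrm{G}\textrm{-}\mathrm{dim}_AX$, which gives the vanishing for $i\gg 0$.

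For part (1), closure under finite direct sums and under direct summands will follow from the fact that $\mathcal{R}(A)$ is thick (Lemma~\ref{lem6.2}) together with additivity of $\mathrm{sup}$ and of $\mathrm{RHom}_A(-,A)$: if $X',X''\in\mathcal{A}$ then $X'\oplus X''\in\mathcal{R}(A)$, $\mathrm{sup}(X'\oplus X'')=\max\{\mathrm{sup}\,X',\mathrm{sup}\,X''\}\leq 0$, and $\mathrm{RHom}_A(X'\oplus X'',A)=\mathrm{RHom}_A(X',A)\oplus\mathrm{RHom}_A(X'',A)$ has $\mathrm{sup}\leq 0$; and the same numerical inequalities pass to summands. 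For closure under extensions, given an exact triangle $X'\to X\to X''\rightsquigarrow$ with $X',X''\in\mathcal{A}$, Proposition~\ref{lem3.5} gives $X\in\mathcal{R}(A)$, the long exact cohomology sequence forces $\mathrm{sup}\,X\leq 0$, and applying $\mathrm{RHom}_A(-,A)$ (which produces the triangle $\mathrm{RHom}_A(X'',A)\to\mathrm{RHom}_A(X,A)\to\mathrm{RHom}_A(X',A)\rightsquigarrow$) and repeating the argument gives $\mathrm{sup}\,\mathrm{RHom}_A(X,A)\leq 0$; hence $X\in\mathcal{A}$.

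For part (2), presilting is immediate since $\mathrm{Hom}_{\mathrm{D}(A)}(A,A[i])=\mathrm{H}^i(A)=0$ for $i>0$ because $A$ is non-positive. For functorial finiteness I will build the two approximations by the dualization device already used in Lemma~\ref{lem:6.00}. A right $\mathcal{P}$-approximation of $X\in\mathcal{R}(A)$ is obtained by choosing a surjection $\mathrm{H}^0(A)^n\twoheadrightarrow\mathrm{H}^0(X)$ of the finitely generated $\mathrm{H}^0(A)$-module $\mathrm{H}^0(X)$ and lifting it, along the identification $\mathrm{Hom}_{\mathrm{D}(A)}(A^n,X)\cong\mathrm{H}^0(X)^n$, to a morphism $f\colon A^n\to X$; then $\mathrm{Hom}_{\mathrm{D}(A)}(A,A^n)\to\mathrm{Hom}_{\mathrm{D}(A)}(A,X)$ is the chosen surjection, and $A^n\in\mathcal{P}\subseteq\mathcal{G}_0\subseteq\mathcal{R}(A)$ by Remark~\ref{lem:3.1}(3). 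A left $\mathcal{P}$-approximation is produced by applying this construction to $\mathrm{RHom}_A(X,A)\in\mathrm{D}^{\mathrm{b}}_{\mathrm{f}}(A)$, obtaining $g\colon A^m\to\mathrm{RHom}_A(X,A)$ with $\mathrm{H}^0(g)$ surjective, and then applying $\mathrm{RHom}_A(-,A)$ to get $X\simeq\mathrm{RHom}_A(\mathrm{RHom}_A(X,A),A)\to\mathrm{RHom}_A(A^m,A)=A^m$; reflexivity of $X$ and of the free module, together with naturality of biduality, turn the surjectivity of $\mathrm{H}^0(g)$ into surjectivity of $\mathrm{Hom}_{\mathrm{D}(A)}(A^m,A)\to\mathrm{Hom}_{\mathrm{D}(A)}(X,A)$.

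The only delicate point, and hence the main obstacle, is the bookkeeping in the left-approximation step: one must check that under the biduality isomorphisms $\mathrm{Hom}_{\mathrm{D}(A)}(X,A)\cong\mathrm{H}^0(\mathrm{RHom}_A(X,A))$ and $\mathrm{Hom}_{\mathrm{D}(A)}(A^m,A)\cong\mathrm{H}^0(A^m)$ the map induced by the dualized morphism really is $\mathrm{H}^0(g)$. This is precisely the compatibility already exploited in the proofs of Lemma~\ref{lem:6.00} and Theorem~\ref{thm:main}(1), so it requires only being explicit rather than any new ingredient; everything else is routine.
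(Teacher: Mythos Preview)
Your proof is correct and follows essentially the same approach as the paper: both use the description $\mathcal{A}={^{\perp_f}A[>0]}\cap{A[<0]^{\perp_f}}$ (equivalently, $\sup X\leq 0$ and $\sup\mathrm{RHom}_A(X,A)\leq 0$) for part (1), the non-positivity of $A$ together with finite generation of $\mathrm{H}^0(X)$ and $\mathrm{H}^0(\mathrm{RHom}_A(X,A))$ for part (2), and the boundedness of $X$ and $\mathrm{RHom}_A(X,A)$ for part (3). The only minor difference is that for the left $\mathcal{P}$-approximation the paper proceeds more directly---it simply notes that $\mathrm{Hom}_{\mathrm{D}(A)}(X,A)\cong\mathrm{H}^0(\mathrm{RHom}_A(X,A))$ is a finitely generated $\mathrm{H}^0(A)$-module and takes the map $X\to A^t$ given by a finite generating set---whereas your dualization detour via Lemma~\ref{lem:6.00} reaches the same map after the bookkeeping you flag; both arguments are valid.
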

\begin{proof}
(1) Since $\mathcal{A}={{A[<0]}^{\perp_{f}}}\cap{^{\perp_{f}}{A[>0]}}$, one can check that all objects in $\mathcal{A}$ is closed under extensions and direct summands and finite direct sums in $\mathcal{R}(A)$.

(2) The subcategory $\mathcal{P}$ is  presilting follows from the fact that $A$ is a non-positive DG-ring.
Note that for any $N\in \mathcal{R}(A)$, the set of the generators of $\mathrm{Hom}_{\mathrm{D}(A)}(A,N) \cong \mathrm{H}^{0}(N)$  is finite.
If $f_{1},\cdots,f_{s}$ are generators of $\mathrm{Hom}_{\mathrm{D}(A)}(A,N)$, then the canonical homomorphism $f : A^{(s)}\rightarrow N$ given by $(f_{1},\cdots,f_{s})$  is a right $\mathcal{P}$-approximation.
On the other hand, since $\mathrm{RHom}_{A}(X,A)\in\mathrm{D}^{\mathrm{b}}_{\mathrm{f}}(A)$ for any $X\in\mathcal{R}(A)$, the set
$\mathrm{Hom}_{\mathrm{D}(A)}(X,A)$ also has finite generators. So the left $\mathcal{P}$-approximation can be constructed by a similar way.
Therefore, $\mathcal{P}$ is functorially finite in $\mathcal{R}(A)$.
\end{proof}

 Following \cite{we16}, we denote by $[\mathcal{D}]$ the ideal of morphisms in $\mathcal{C}$ which factor through objects in $\mathcal{D}$. The stable category of $\mathcal{C}$ by $\mathcal{D}$ is the quotient category $\mathcal{C}/[\mathcal{D}]$, for which objects are the same as $\mathcal{C}$, but morphisms are morphisms in $\mathcal{C}$ modulo those in $[\mathcal{D}]$.

The following results are analogues of well-known properties of
Cohen-Macaulay modules.

\begin{thm}\label{thm4.9}\begin{enumerate}
\item The stable category $\underline{\mathcal{A}}:=\mathcal{A}/[\mathcal{P}]$ is a triangulated category;

\item The composition  $\mathcal{A}\hookrightarrow \mathcal{R}(A)\rightarrow \mathcal{R}(A)/ \langle\mathcal{P}\rangle$ induces a triangle equivalence
$$H:~~~ \underline{\mathcal{A}}\simeq \mathcal{R}(A)/ \langle\mathcal{P}\rangle.$$

\item $\mathcal{R}(A)=\mathrm{D}^{\mathrm{b}}_{\mathrm{f}}(A)$ if and only if $A$ is a local Gorenstein DG-ring.
\end{enumerate}
\end{thm}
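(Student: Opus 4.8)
The plan is to prove the three parts in order, leaning on the structural results already established, in particular Lemma \ref{lem6.2} (which says $\mathcal{R}(A)$ is thick and $\langle\mathcal{A}\rangle=\mathcal{R}(A)$) and Lemma \ref{lem3.4} (which records that $\mathcal{A}$ is extension-closed and closed under summands, and that $\mathcal{P}=\mathrm{add}\,A$ is presilting and functorially finite in $\mathcal{R}(A)$, with $\mathrm{Hom}$-vanishing in high degrees).

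For part (1), I would apply the general machinery of Iyama--Yoshino-type subfactor constructions (or the version in \cite{we16}): if $\omega=\mathcal{P}$ is a functorially finite presilting subcategory inside a triangulated category and $\mathcal{A}$ is the extension-closed subcategory $^{\perp_f}A[>0]\cap A[<0]^{\perp_f}$, then $\underline{\mathcal{A}}=\mathcal{A}/[\mathcal{P}]$ carries a canonical triangulated structure whose suspension is defined via the left $\mathrm{add}\,A$-approximation $X\to P\to \Sigma X\rightsquigarrow$ produced in Lemma \ref{lem:6.00} (after shifting so that $\sup X=0$), and triangles are the images of those exact triangles in $\mathrm{D}(A)$ with all three terms in $\mathcal{A}$. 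The key verifications are that $\Sigma$ is an autoequivalence of $\underline{\mathcal{A}}$ — here I use that $\mathcal{A}$ is closed under summands and that the approximation triangles are unique up to the ideal $[\mathcal{P}]$, plus a cosyzygy in the other direction — and the octahedral/rotation axioms, which follow formally from the ambient triangulated structure on $\mathrm{D}^{\mathrm{b}}_{\mathrm{f}}(A)$ together with the $\mathrm{Hom}$-vanishing $\mathrm{Hom}_{\mathrm{D}(A)}(X,N[i])=0$ for $i\gg0$, $N\in\mathcal{P}$, from Lemma \ref{lem3.4}(3).

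For part (2), consider the composite $\Phi:\mathcal{A}\hookrightarrow\mathcal{R}(A)\to\mathcal{R}(A)/\langle\mathcal{P}\rangle$. Since every object of $\mathcal{P}$ becomes zero in the Verdier quotient, $\Phi$ kills $[\mathcal{P}]$ and factors as $H:\underline{\mathcal{A}}\to\mathcal{R}(A)/\langle\mathcal{P}\rangle$; one checks $H$ is a triangle functor by construction of the triangulation in (1), because the defining triangles of $\underline{\mathcal{A}}$ are genuine triangles of $\mathrm{D}(A)$ and the suspension was built from the approximation triangle $X\to P\to\Sigma X\rightsquigarrow$, which in $\mathcal{R}(A)/\langle\mathcal{P}\rangle$ becomes the shift $X\to 0\to X[1]$. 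For \textbf{essential surjectivity}, take any $X\in\mathcal{R}(A)$; by Theorem \ref{thm:main}(1) and (2) (the ``$\ast$''-decompositions used in the proof of Lemma \ref{lem6.2}), $X$ — or $X\oplus X[\mathrm{amp}A-\mathrm{amp}X]$ in the low-amplitude case — lies in $\mathcal{G}_0[-\sup X]\ast\mathcal{P}[\cdots]\ast\cdots$, so modulo $\langle\mathcal{P}\rangle$ it is isomorphic to an object of $\mathcal{G}_0[-\sup X]\subseteq\mathcal{A}$; the auxiliary summand is handled because $\mathcal{A}$ and $\langle\mathcal{P}\rangle$ are closed under summands. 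For \textbf{fully faithfulness}, I would compute $\mathrm{Hom}$-sets in the Verdier quotient by left fractions $X\xrightarrow{f}Z\xLeftarrow{s}Y$ with cone of $s$ in $\langle\mathcal{P}\rangle$; using that $\mathcal{P}$ is functorially finite in $\mathcal{R}(A)$ (Lemma \ref{lem3.4}(2)) one shows such a fraction can always be replaced by one with $Z\in\mathcal{A}$, i.e. a genuine morphism in $\mathcal{A}$, and that two such represent the same morphism in the quotient iff their difference factors through $\langle\mathcal{P}\rangle$, which for objects of $\mathcal{A}$ means through $\mathrm{add}\,A=\mathcal{P}$ itself (here the high-degree $\mathrm{Hom}$-vanishing and the orthogonality built into $\mathcal{A}=\,^{\perp_f}A[>0]\cap A[<0]^{\perp_f}$ are what forces the factorization to go through $\mathcal{P}$ rather than a larger piece of $\langle\mathcal{P}\rangle$). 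This bijection $\underline{\mathcal{A}}(X,Y)\xrightarrow{\sim}(\mathcal{R}(A)/\langle\mathcal{P}\rangle)(X,Y)$ is the content of fully faithfulness.

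For part (3), the equivalence $\mathcal{R}(A)=\mathrm{D}^{\mathrm{b}}_{\mathrm{f}}(A)\iff A$ local Gorenstein is essentially a repackaging of Theorem \ref{thm:main}(3). If $A$ is local Gorenstein, then $\mathrm{injdim}_AA<\infty$, so for every $X\in\mathrm{D}^{\mathrm{b}}_{\mathrm{f}}(A)$ one has $\mathrm{RHom}_A(X,A)\in\mathrm{D}^{\mathrm{b}}_{\mathrm{f}}(A)$ and $X$ is reflexive — exactly the argument in the proof of $(i)\Rightarrow(iii)$ of Theorem \ref{thm:main}(3) — hence $\mathrm{D}^{\mathrm{b}}_{\mathrm{f}}(A)\subseteq\mathcal{R}(A)$, and the reverse inclusion is trivial. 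Conversely, if $\mathcal{R}(A)=\mathrm{D}^{\mathrm{b}}_{\mathrm{f}}(A)$ then in particular $\bar{k}\in\mathcal{R}(A)$, so $\mathrm{G}\text{-}\mathrm{dim}_A\bar{k}<\infty$ (it is finite for every reflexive DG-module by definition), and Theorem \ref{thm:main}(3)$(ii)\Rightarrow(i)$ gives that $A$ is local Gorenstein.

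I expect the \textbf{main obstacle} to be the fully-faithfulness half of part (2): one must show that a left fraction over $\langle\mathcal{P}\rangle$ between two objects of $\mathcal{A}$ can be normalized to an honest $\mathcal{A}$-morphism and that the kernel of the comparison map is precisely $[\mathcal{P}]$. Making this precise requires a careful induction using the finitistic-dimension-style filtrations of Theorem \ref{thm:main} to resolve an arbitrary object of $\langle\mathcal{P}\rangle$ appearing as a cone, together with the approximation property of Lemma \ref{lem3.4}(2) to push the fraction back into $\mathcal{A}$; the degree-vanishing in Lemma \ref{lem3.4}(3) is what guarantees the induction terminates. The triangulated-structure verifications in part (1) are routine given the cited analogues, and part (3) is short.
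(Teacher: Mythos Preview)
Your plan is correct and follows essentially the same route as the paper: parts (1) and (2) are obtained by feeding Lemma \ref{lem3.4} (presilting, functorially finite $\mathcal{P}$ in $\mathcal{R}(A)$, extension-closed $\mathcal{A}$, high-degree Hom-vanishing) into the general silting-reduction machinery of \cite{we16} (and \cite{ya13}), and part (3) is an immediate consequence of Theorem \ref{thm:main}(3). The paper simply cites \cite[Corollary 2.7, Proposition 2.4(3)]{we16} and \cite[Theorem 1.1]{ya13} as black boxes, whereas you have unpacked what those results say---your description of the suspension via approximation triangles, essential surjectivity via the $\ast$-decompositions of Theorem \ref{thm:main}, and fully-faithfulness via fraction normalization is exactly the content of those cited theorems, so the ``main obstacle'' you anticipate is already handled there.
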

\begin{proof}
$(1)$ By Lemma \ref{lem3.4}(2), we know that $\mathcal{P}$ is  presilting  and functorially finite in $\mathcal{R}(A)$.
Then the result follows from the proof of \cite[Corollary 2.7]{we16} and \cite[Proposition 2.4(3)]{we16}.

$(2)$  Since $\mathcal{P}\subseteq \mathcal{R}(A)$ and $\mathcal{R}(A)$ is a triangulated full subcategory of $\mathrm{D}^{\mathrm{b}}_{\mathrm{f}}(A)$, we have
$\langle\mathcal{P}\rangle \subseteq \mathcal{R}(A)$. Thanks to \cite[Corollary 2.7]{we16} or \cite[Theorem 1.1]{ya13}, by Lemma  \ref{lem3.4}, there is a triangle equivalence between $\underline{\mathcal{A}}$ and the singularity category $\mathrm{D}_{\mathrm{f}}^{\mathrm{b}}(A)/ \langle\mathcal{P}\rangle$.

$(3)$ This follows from Theorem \ref{thm:main}(3).
\end{proof}

Let $G$ be the obvious composite functor $\mathcal{A}\rightarrow \mathrm{D}_{\mathrm{f}}^{\mathrm{b}}(A)\rightarrow \mathrm{D}_{\mathrm{f}}^{\mathrm{b}}(A)/ \langle\mathcal{P}\rangle$. Since $G(\mathcal{P}) = 0$, there is a unique
factorization of $G$ through the canonical projection $S : \mathcal{A} \rightarrow \underline{\mathcal{A}}$. We denote by $F$ the factorization from $\underline{\mathcal{A}}$ to $\mathrm{D}_{\mathrm{f}}^{\mathrm{b}}(A)/ \langle\mathcal{P}\rangle$. Then $G = F\circ S$. In fact, $F$ is the  composite functor $\underline{\mathcal{A}}\overset{H}{\rightarrow} \mathcal{R}(A)/ \langle\mathcal{P}\rangle\hookrightarrow \mathrm{D}_{\mathrm{f}}^{\mathrm{b}}(A)/ \langle\mathcal{P}\rangle$.
Let $A$ be a local Gorenstein DG-ring. By Theorem \ref{thm4.9}(3), we obtain a triangle equivalence:
$F: \underline{\mathcal{A}}\simeq \mathrm{D}_{\mathrm{f}}^{\mathrm{b}}(A)/ \langle\mathcal{P}\rangle.$

\medskip

We are now ready to prove Corollary \ref{cor1.6}.

{\bf Proof of Corollary \ref{cor1.6}.} By Theorem \ref{thm4.9}, it suffices to show that if the functor $F: \underline{\mathcal{A}}\rightarrow \mathrm{D}_{\mathrm{f}}^{\mathrm{b}}(A)/ \langle\mathcal{P}\rangle$ is dense, then  $A$ is a local Gorenstein DG-ring.
 Assume that $F: \underline{\mathcal{A}}\rightarrow \mathrm{D}_{\mathrm{f}}^{\mathrm{b}}(A)/ \langle\mathcal{P}\rangle$ is dense, and let $M$ be any DG-module in $\mathrm{D}_{\mathrm{f}}^{\mathrm{b}}(A)$. It follows that $M\cong F (G)$ in $\mathrm{D}_{\mathrm{f}}^{\mathrm{b}}(A)$ for some $G\in \mathcal{A}$. Let $s\setminus f: M \stackrel{f}\longrightarrow Z\stackrel{s}\Longleftarrow G$ be an isomorphism in $\mathrm{D}_{\mathrm{f}}^{\mathrm{b}}(A)$ with $\mathrm{cn}(s)\in \langle\mathcal{P}\rangle$, then $\mathrm{cn}(f)\in \langle\mathcal{P}\rangle$. Consider the triangle $G \stackrel{s}\rightarrow Z\longrightarrow \mathrm{cn}(s)\rightsquigarrow$ in $\mathrm{D}_{\mathrm{f}}^{\mathrm{b}}(A)$. We see that $G$ and $\mathrm{cn}(s)$ lie in $\mathcal{R}(A)$, so $Z\in \mathcal{R}(A)$. It follows from $Z\in \mathcal{R}(A)$ and $\mathrm{cn}(f)\in \mathcal{R}(A)$ that $M\in \mathcal{R}(A)$. Therefore the  G-dimension of $M$ is finite, and so $A$ is a local Gorenstein DG-ring by Theorem \ref{thm:main}(3).
\hfill$\Box$


\begin{remark}\label{remark:4.12} {\rm
If $A$ is a local Gorenstein ring, then Corollary \ref{cor1.6} gives the classical Buchweitz-Happle Theorem and its inverse.
As we stated in the Introduction, Jin \cite{jin20} proved that if $A$ is a proper Gorenstein DG-algebra, then there is a triangle equivalence between the stable category of Cohen-Macaulay DG-modules (see Remark \ref{rem4.6}) and the singularity category
$\mathrm{D}^{\mathrm{b}}(A)/\langle\mathcal{P}\rangle$. This generalizes Buchweitz-Happle Theorem to the setting of proper Gorenstein DG-algebras (see \cite[Theorem 2.4]{jin20}). In fact, over commutative local noetherian DG-algebras over a field $k$, condition (ii) of the proper Gorenstein DG-algebra is equivalent to $\mathrm{injdim}_AA < \infty$. In this case,  a Gorenstein ring in our setting is a proper Gorenstein DG-algebra defined in \cite{jin20}. Therefore, over a commutative local noetherian non-positive DG algebra over $k$, Corollary \ref{cor1.6} shows that the inverse of Buchweitz-Happel Theorem is also true.}
\end{remark}

\smallskip
{\bf Acknowledgements.} The research was supported by the National Natural Science Foundation of China (Grant Nos. 12171206, 12201223). The authors would like to thank Dong Yang for sharing Example 3.4, and Haibo Jin for helpful discussions, particularly regarding Cohen-Macaulay DG-modules and the Buchweitz-Happel Theorem. Both authors are very grateful to the anonymous referee for suggestions on the language and exposition of the article.

\bigskip \centerline {\bf Declaration of Interest Statement}
The authors declare that they have no known competing financial interests or  personal relationships that could have appeared to influence the work reported in this article.

\renewcommand\refname{References}

\vspace{4mm}
\noindent\textbf{Jiangsheng Hu}\\
School of Mathematics, Hangzhou Normal University, Hangzhou 311121, P. R. China.\\
Email: \textsf{hujs@hznu.edu.cn}\\[1mm]
\textbf{Xiaoyan Yang}\\
School of Science, Zhejiang University of Science and Technology, Hangzhou 310023, P. R. China.\\
Email: \textsf{yangxy@zust.edu.cn}\\[1mm]
\textbf{Rongmin Zhu}\\
School of Mathematical
Sciences, Huaqiao University, Quanzhou 362021, P. R. China.\\
E-mail: \textsf{rongminzhu@hotmail.com}\\[1mm]

\begin{thebibliography}{99}

\bibitem{Auslander1} M. Auslander, \emph{Anneaux de Gorenstein, et torsion en alg\`ebre commutative}, S\'eminaire d'alg\`ebre commutative dirig\'e par P. Samuel, Secr\'etariat math\'ematique, Paris, 1967.
\bibitem{AB1969} M. Auslander and M. Bridger, \emph{Stable module theory}, Mem. Amer. Math. Soc. vol. 94, 1969.
\bibitem{AR1991}M. Auslander and I. Reiten, \emph{Applications of contravariantly finite subcategories}, Adv. Math. 86 (1991) 111--152.
\bibitem{ARS} M. Auslander, I. Reiten and S.O. Smal{\o}, \emph{Representation theory of Artin algebras}, Cambridge Studies in Advanced Mathematics, vol. 36, Cambridge University Press, Cambridge, 1995.
\bibitem{AM2002} L.L. Avramov and A. Martsinkovsky, \emph{Absolute, relative, and Tate cohomology of modules of finite Gorenstein dimension}, Proc. London Math. Soc. 85 (2002) 393--440.
\bibitem{Bass} H. Bass, \emph{Finitistic dimension and a homological generalization of semi-primary rings}, Trans. Amer. Math. Soc. 95 (1960) 466--488.
\bibitem{BJO} P.A. Bergh, D.A. J{\o}rgensen, S. Oppermann, {\it The Gorenstein defect category}, Quart. J. Math. {66}(2) (2015) 459--471.
\bibitem{Be1} A. Beligiannis, {\it The homological theory of contravariantly finite subcategories: Gorenstein categories, Auslander-Buchweitz contexts and (co-)stabilization}, Comm. Algebra 28 (2000) 4547--4596.
\bibitem{BSSW}  I. Bird, L. Shaul, P. Sridhar and J. Williamson, \emph{Finitistic dimensions over commutative DG-rings}, \emph{Math. Z.} 309 (2025) 1--29.
\bibitem{Buchweitz} R.-O. Buchweitz, \emph{Maximal Cohen-Macaulay modules and Tate cohomology over Gorenstein rings}, with appendices by Luchezar L. Avramov, Benjamin Briggs, Srikanth B. Iyengar, and Janina C. Letz, Math. Surveys and Monographs 262, Amer. Math. Soc. 2021.
\bibitem{chr} L.W. Christensen, \emph{Gorenstein Dimensions}, Lecture Notes
in Math., vol. 1747, Springer-Verlag, Berlin, 2000.
\bibitem{CPST} L.W. Christensen, G. Piepmeyer, J. Striuli and R. Takahashi, \emph{Finite Gorenstein representation type implies simple singularity}, Adv. Math. 218 (2008) 1012--1026.
\bibitem{EJ1995} E.E. Enochs and O.M.G. Jenda, \emph{Gorenstein injective and projective modules}, Math. Z. 220 (1995) 611--633.
\bibitem{FIJ2003} A.F. Frankild, S. Iyengar and P. J${\o}$gensen, \emph{Dualizing differential graded modules and Gorenstein differential graded algebras}, J. London Math. Soc. 68(2) (2003) 288--306.
\bibitem{FJ2003} A.F. Frankild and P. J${\o}$gensen, \emph{Gorenstein differential graded algebras}, Israel J. Math. 135 (2003) 327--353.
\bibitem{FP}A.F. Frankild and P. J${\o}$gensen, \emph{Homological identities for differential graded algebras}, J. Algebra 265 (2003) 114--135.

\bibitem{F}H.-B. Foxby, \emph{Hyperhomological algebra and commutative algebra}, Notes in preparation.
\bibitem{H2} D. Happel, \emph{On Gorenstein Algebras}, in: Representation theory of finite groups and finite-dimensional algebras (Proc. Conf. at Bielefeld, 1991), Progress in Math. 95,
Birkh\"{a}user, Basel, 1991, pp.389--404.
\bibitem{Holm} H. Holm, \emph{Gorenstein homological dimensions}, J. Pure Appl. Algebra 189 (2004) 167--193.
\bibitem{ya13} O. Iyama and D. Yang, \emph{Silting reduction and Calabi-Yau reduction of triangulated categories}, Trans. Amer. Math. Soc. 370 (2018) 7861--7898.
\bibitem{jin20}H.B. Jin, \emph{Cohen-Macaulay differential graded modules and negative Calabi-Yau configurations}, Adv. Math. 374 (2020) 107338.
\bibitem{J}P. J${\o}$rgensen, \emph{Amplitude inequalities for differential graded modules}, Forum Math. 22 (2010) 941--948.
\bibitem{KZ} F. Kong, P. Zhang, \emph{From CM-finite to CM-free}, J. Pure Appl. Algebra 220 (2016) 782--801.
\bibitem{Mi19} H. Minamoto, \emph{Homological identities and dualizing complexes of commutative differential graded algebras}, Israel J. Math. 242 (2021) 1--36.
\bibitem{Mi18} H. Minamoto, \emph{Resolutions and homological dimensions of DG-modules}, Israel J. Math. 245 (2021) 409--454.
\bibitem{NP}  H. Nakaoka and Y. Palu,  Extriangulated categories, Hovey twin cotorsion pairs and model structures, Cahiers de Topologie et Geometrie Differentielle Categoriques, Volume LX-2 (2019) 117--193.
\bibitem{Orlov} D. Orlov, \emph{Triangulated categories of singularities and D-branes in Landau-Ginzburg models}, Proc. Steklov Inst. Math. 3(246) (2004) 227--248.
\bibitem{RZ} C.M. Ringel and P. Zhang, \emph{Representations of quivers over the algebra of dual numbers},  J. Algebra 475 (2017) 327--360.
\bibitem{s18}L. Shaul, \emph{Injective DG-modules over non-positive DG-rings}, J. Algebra 515 (2018) 102--156.
\bibitem{s19}L. Shaul, \emph{Completion and torsion over commutative DG rings}, Israel J. Math. 232 (2019) 531--588,
\bibitem{Shaul} L. Shaul, \emph{The Cohen-Macaulay property in derived commatative algebra}, Trans. Amer. Math. Soc. 373 (2020) 6095--6138.
\bibitem{sh21} L. Shaul, \emph{Sequence-regular commutative DG-rings}, J. Algebra 647 (2024) 400--435.
\bibitem{Xi} C.C. Xi, \emph{On the finitistic dimension conjecture, III: Related to the pair $eAe\subseteq A$}, J. Algebra {319} (2008) 3666--3688.
\bibitem{ya20} X.Y. Yang, L. Wang, \emph{Homological invariants over non-positive DG-rings}, J. Algebra Appl. 2050153 (2020) 2050153.
\bibitem{YL} X.Y. Yang, Y.J. Li, \emph{Local Cohen-Macaulay DG-Modules}, Appl. Categor. Struct. 31 (8) (2023).
\bibitem{Yassemi} S. Yassemi, \emph{G-dimension}, Math. Scand. 77 (1995) 161--174.
\bibitem{ye16}A. Yekutieli, \emph{Duality and tilting for commutative DG rings}, arXiv:1312.6411v4, 2016.
\bibitem{ye20} A. Yekutieli, \emph{Derived categories}, volume 183, Cambridge University Press, Cambridge, 2020.
\bibitem{Yoshino} Y. Yoshino, \emph{Cohen-Macaulay Modules Over Cohen-Macaulay Rings}, London Math. Soc. Lecture Note Ser., vol. 146, Cambridge Univ. Press, Cambridge, 1990.
\bibitem{we16} J.Q. Wei, \emph{Relative singularity categories, Gorenstein objects and silting theory}, J. Pure Appl. Algebra 222 (2018) 2310--2322.
\end{thebibliography}
\end{document}